\newtheorem{lemma}{Lemma}
\newtheorem{corollary}{Corollary}
\newtheorem{theorem}{Theorem}
\newtheorem {proposition}{Proposition}
\newtheorem{remark}{Remark}
\theoremstyle{definition}
\newtheorem{definition}
{Definition}[section]
\DeclareMathOperator{\rank}{rank}
\DeclareMathOperator{\nulll}{null}
\DeclareMathOperator{\argmin}{argmin}
\DeclareMathOperator{\spann}{span}
\DeclareMathOperator{\diag}{diag}
\title{Node bipartition for rigidity and localization of networks with heterogeneous sensing 
}
\author{Yongjie Liu$^1$, Gangshan Jing$^{2*}$, and Long Wang$^{1*}$}
\date{
    $^1$ Center for Systems and Control, College of Engineering, Peking University, Beijing 100871, China\\
    $^2$ School of Automation, Chongqing University, Chongqing 400044, China\\
    $^*$ \footnotesize Corresponding authors. E-mail: jinggangshan@cqu.edu.cn, longwang@pku.edu.cn\\[2ex]
}
\begin{document}
\maketitle
\begin{abstract}
Graph rigidity theory is an important tool for examining the solvability of sensor network localization (SNL) problems, and ensuring global convergence of localization algorithms. Along this direction, diverse measurements such as signed angle (SA) and ratio of distance (RoD) have been considered. However, little is known about how the bipartition of nodes based on perceptual abilities affects the rigidity property of the network. In this paper, we study the rigidity and localization of networks with heterogeneous nodes, namely, two types of sensors measuring SA and RoD, respectively. Interestingly, the rigidity property is shown to be strongly dependent on the bipartition of nodes, and exhibits a duality. Moreover, an SA-RoD constrained network can be uniquely determined up to uniform rotations, translations, and scalings (global SA-RoD rigidity) even if it is neither SA rigid nor RoD rigid. A scalable approach to construction of globally SA-RoD rigid frameworks is proposed. Localizability analysis and localization algorithm synthesis are both conducted based on  weaker network topology conditions, compared with SA- or RoD-based SNL approaches. Numerical simulations are worked out to validate the theoretical results.
\end{abstract}

\section{Introduction}
Graph rigidity theory investigates whether the shape of a framework can be uniquely determined by certain geometric constraints.  It has been widely applied in computer-aided design (CAD)\cite{servatius1999constraining},  formation control \cite{suttner2018formation,jing2018weak,chen2019controlling}, and  network localization \cite{anderson2010formal,lyjb05,peters2015sensor}. Recent years have witnessed substantial progress in this field, including the extension to embeddings into  surfaces \cite{nixon2014characterization}, the characterization of globally rigid graphs \cite{jackson2005connected,jordan2021note}, and globally linked pairs \cite{garamvolgyi2024partial}. To adapt to practical scenarios with diverse geometric constraints, traditional distance rigidity theory has been extended to bearing rigidity \cite{lyjb4}, angle rigidity \cite{lyjbb8}, signed angle (SA) rigidity \cite{lyjb9,lyjbb10}, and ratio-of-distance (RoD) rigidity \cite{lyjbb8.1}, etc. However, these theories are merely based on one type of geometric constraints. 

For sensor networks with mixed measurements, various rigidity theories mixing different types of constraints have been developed. \cite{TE} mixes constraints of distances and bearings, and establishes a combinatorial characterization of rigidity. \cite{CAOr} mixes constraints of bearings and RoDs to propose a bearing-ratio-of-distance (B-RoD) rigidity theory, and shows the equivalence between rigidity and graph connectivity.  \cite{lyjbb17} mainly mixes distances (or angles) and signed normalized areas (or volumes) to address the flip and flex ambiguity. In \cite{lyjb014,lyjb013}, the authors mix angles and displacements to study the shape uniqueness problem. In \cite{clinch2023global}, the author provides a sufficient and necessary condition for global rigidity of 2-dimensional direction-length frameworks with connected rigidity matroids. In summary, mixed geometric constraints (distance, bearing, angle, SA, RoD, displacement) in most existing works are defined through edges, with little consideration given to vertex-based attributes. However, different types of edge constraints often stem from different attributes of vertices, a situation frequently seen in reality.

Building on aforementioned developments, these mixed rigidity theories have been extensively applied to sensor network localization (SNL), which aims to localize a network when the positions of some nodes (anchors) and relative measurements are provided.  In \cite{lyjb0012}, the author provides graphical criteria for localizability and designs SNL algorithms under hybrid distance-bearing information. In \cite{lyjb012,lyjb014,lyjb013, TIE2025}, the authors study SNL by constructing linear displacement constraints for locations of nodes using mixed types of local relative measurements.  Despite these interesting works on SNL with mixed measurements, the advantages of heterogeneous nodes have yet to be fully explored. More specifically, how to assign the sensing capabilities for the nodes to ensure network localizability under weaker topology conditions has rarely been discussed. 

This paper explores the influence of node bipartition on rigidity and localization of sensor networks with heterogeneous SA and RoD sensing, where each node typically accesses only one type of measurements. 
Notably, in contrast to the well-established bipartite rigidity \cite{KNN}, the nodes are partitioned according to their sensing capabilities, and links may exist within each subset of the bipartition in our setting. Our analysis reveals that node bipartition is crucial for ensuring framework rigidity, network localizability, and convergence of localization algorithms. An additional benefit is that heterogeneous sensing with appropriate bipartitions can reduce the number of edges required for rigidity compared with pure SA or RoD sensing \cite{lyjbb8.1,lyjb9}. These aspects represent the most distinctive features of our work.

The main contributions of this work can be summarized as follows.

1) The SA-RoD rigidity theory in $\mathbb{R}^{2}$ is established. It is shown that the shape of a framework can be uniquely determined by SA-RoD constraints up to uniform rotations, translations, and scalings if it is globally SA-RoD rigid (Theorem \ref{shp}). Furthermore, we find an upper bound on the rank of the infinitesimal SA-RoD rigidity matrix to constrain the minimal number of edges required for rigidity (Lemma \ref{infi}).

2) To examine how bipartition affects the rigidity property of a heterogeneously constrained framework, we introduce the notion of connectivity for bipartition-induced SA and RoD index sets (Definition $\ref{def:con}$). It is proved that an SA-RoD constrained framework inherits the global rigidity property of the pure SA or RoD constrained framework when the corresponding SA or RoD index set is connected (Theorems \ref{thmm} and \ref{thmm1}).  An algorithm of finding a suitable bipartition for SA (RoD) connectivity is proposed (Algorithm \ref{alg:buildpar}). In addition, a novel result of bipartition-induced duality is established for infinitesimal SA-RoD rigidity (Corollary $\ref{thm10}$). Furthermore, we discover that global SA-RoD rigidity is not a generic property of the underlying graph with a specific bipartition, while (infinitesimal ) SA-RoD rigidity is a generic property (Theorem $\ref{gen1}$).

3) Based on the SA-RoD rigidity theory, a vertex addition approach, including the design of node bipartition, is proposed to construct globally SA-RoD rigid frameworks. We show that  frameworks generated by SA-RoD orderings are globally SA-RoD rigid (Theorem $\ref{order1}$). Moreover, we find that under suitable bipartitions, the number of edges required to guarantee rigidity of frameworks can be less than those in pure SA/RoD rigidity theory \cite{lyjbb8.1,lyjb9}. Specifically, we construct a class of minimally globally SA-RoD rigid frameworks with $n$ vertices and $\lceil{\frac{3n-4}{2}\rceil}$ edges. 

4) Necessary and sufficient conditions for network localizability are established. The SNL problem with SA and RoD measurements is reformulated as an edge-based optimization problem by utilizing the cycle basis matrix (Theorem $\ref{equv1}$). The connectivity of the SA (RoD) index sets is shown to be essential for decomposing the edge-based problem into two subproblems of finding distances and bearings, respectively. In particular, if the SA index set is connected over the underlying graph, the centralized SNL problem reduces to solving linear equations (Theorem $\ref{snllp}$). Furthermore, a two-stage distributed SNL protocol with global convergence is proposed for sensor networks with connected SA index sets (Theorem \ref{con_dis_SNL}). 

The remainder of this paper is organized as follows. Section 2 introduces SA-RoD rigidity theory, including the effects of node bipartition, shape uniqueness, generic property, and construction of globally SA-RoD rigid frameworks. Section 3 formulates the SA-RoD-based SNL problem. An edge-based analysis via a graph cycle basis is performed. Furthermore, localization algorithms are discussed based on the connectivity of constraints. Section 4 provides some simulating examples. Section 5 concludes this article with some suggestions on future research.

Notations: In this paper, $\mathbb{R}$ represents the set of real numbers; $\mathbb{C}$ is the set of complex numbers; $\mathbb{Z}$ is the set of integers; $I_{N}\in\mathbb{R}^{N\times N}$ is the unit matrix; $\mathbb{R}^{N}$ denotes the vector space of dimension $N$; $\mathbb{S}^1$ is the unit circle in $\mathbb{R}^2$; For a set $T$, $|T|$ denotes the number of elements in $T$; For $a\in\mathbb{R}$, a ceiling function of $a$ is denoted by $\lceil{a\rceil}$; The norm of $s=(s_1,...,s_n)^\top\in\mathbb{R}^{n}$ is denoted by $||s||\triangleq[\sum\limits_{k=1}^{n}{s_{k}^{2}}]^{\frac{1}{2}}$; For a matrix $X\in\mathbb{R}^{M\times N}$, the null space and rank of $X$ are denoted by $\nulll(X)$ and $\rank(X)$, respectively; $X^{\top}$ denotes the transpose of $X$; $\mathbf{1}_{n}\triangleq (1,\ldots,1)^{\top}\in\mathbb{R}^{n}$; For a square matrix $X\in\mathbb{R}^{N\times N}$, its determinant is denoted by $\det(X)$; $\otimes$ is the Kroneck product; $\odot$ is the Khatri-Rao product; $\mathcal{R}(\theta)\in\mathbb{R}^{2\times 2}$ is the rotation matrix associated with $\theta\in[0,2\pi)$.
 We assume that $\mathcal{G}=(\mathcal{V},\mathcal{E})$ is an undirected graph, where $\mathcal{V}=\left\{ {1,2,\ldots ,n} \right\}$ is the vertex set, $\mathcal{E}\subseteq \mathcal{V}\times\mathcal{V}$ is the edge set with $m$ edges. Let $\deg(i)$ be the degree of vertex $i$, and $\mathcal{N}_{i}\triangleq\{j\in\mathcal{V}:(j,i)\in\mathcal{E}\}$ be the set of neighbors of vertex $i$ in graph $\mathcal{G}$. A complete graph with $n$ vertices is denoted by $\mathcal{K}_{n}$. If the number of vertices is irrelevant, we simply denote it by $\mathcal{K}$. Let $\Delta ijk$ denote the triangle with vertices $i,j,k$, while $\Diamond{ijkl}$ denotes the quadrilateral with vertices $i,j,k,l$.

\section{Bipartition-based SA-RoD rigidity theory}
In this section, we will define some concepts about SA-RoD rigidity. Based on these definitions, the effects of node bipartition, the shape determination problem, and generic properties of the defined rigidity concepts will be studied, respectively. Furthermore, construction approaches of globally SA-RoD rigid frameworks will be proposed.
\subsection{Formulation of SA-RoD rigidity}
Rigidity theories based on pure constraints such as distances, bearings, RoDs, and SAs are briefly introduced in Appendix $\ref{pre}$.  Throughout this paper,  $(\mathcal{G}(A,D),p)$ is used to represent the framework with heterogeneous vertices, where $\mathcal{G}(A,D)=(\mathcal{V}_{A}\cup\mathcal{V}_{D},\mathcal{E})$ is the underlying graph with the {\it node bipartition} $\mathcal{V}=\mathcal{V}_{A}\cup\mathcal{V}_{D}$, $\mathcal{V}_A$ and $\mathcal{V}_D$ correspond to the sets of nodes sensing SA and RoD measurements, respectively, $|\mathcal{V}|=n\geq 3$, $|\mathcal{E}|=m$, and $p={{(p_{1}^{\top},p_{2}^{\top},\ldots p_{n}^{\top})}^{\top}}\in\mathbb{R}^{2n}$ is a configuration. We define four triplet index sets $\mathcal{T}_{A}$, $\mathcal{T}_{D}$, $\mathcal{T}_{\mathcal{G}}$, and $\mathcal{T}_{\mathcal{K}}$ as
$\mathcal{T}_{A}=\{(u,v,w)\in\mathcal{V}^{3}:u\in\mathcal{V}_{A},(u,v),(u,w)\in\mathcal{E},v<w\}$, 
$\mathcal{T}_{D}=\{(u,v,w)\in\mathcal{V}^{3}:u\in\mathcal{V}_{D},(u,v),(u,w)\in\mathcal{E},v<w\}$,
$\mathcal{T}_{\mathcal{G}}=\mathcal{T}_{A}\cup \mathcal{T}_{D}$,  and $\mathcal{T}_{\mathcal{K}}=\{(u,v,w)\in\mathcal{V}^{3}:(u,v),(u,w)\in\mathcal{E}_{\mathcal{K}}, v<w\}$, respectively. For $(r,s,t)\in\mathcal{T}_{A}$ and $(i,j,k)\in\mathcal{T}_{D}$, $\alpha_{rst}$ is the SA defined by $(\ref{def:SA})$ , and $\kappa_{ijk}$ is the RoD defined by $(\ref{def:RoD})$ (with $l=1$).  
Throughout this work, the following assumptions are made:

\textit{Assumption 1:} Different vertices in the framework are not collocated.
 
\textit{Assumption 2:} $\mathcal{V}_{A}\cap\mathcal{V}_{D}=\emptyset, \mathcal{V}_{A}\neq\emptyset, \mathcal{V}_{D}\neq\emptyset$. 

Assumption 1 is commonly used in the literature concerning rigidity theory and localization problems\cite{lyjb014,lyjb013,lyjbb10,TIE2025}. From Assumption 2, we know that $\mathcal{V}_{D}$ and $\mathcal{V}_{A}$ are non-empty and constitute a non-trivial partition of the vertex set $\mathcal{V}$. Furthermore, $\mathcal{T}_{A}\neq\emptyset$, $\mathcal{T}_{D}\neq\emptyset$, and $\mathcal{T}_{A}\cap \mathcal{T}_{D}=\emptyset$. 
To demonstrate the nontrivial properties of networks with heterogeneous SA and RoD nodes, we present a motivating example. 

\textbf{A motivating example:} Consider non-degenerate quadrilateral frameworks as shown in Figure $\ref{lf13}$. Note that the quadrilateral framework is neither RoD rigid nor SA rigid since there exist nontrivial deformations preserving RoD or SA constraints as plotted in Figures $\ref{lf13}(a)$ and $\ref{lf13}(b)$. When $\mathcal{V}_{A}=\{1\}$ as shown in Figure \ref{lf13}(c), 
there exists a flipping ambiguity for vertex 3. 
Similar results hold for frameworks in Figures $\ref{lf13}(d)$ and $\ref{lf13}(e)$. 
\begin{figure}[!htbp]
\centerline{\includegraphics[width=\columnwidth]{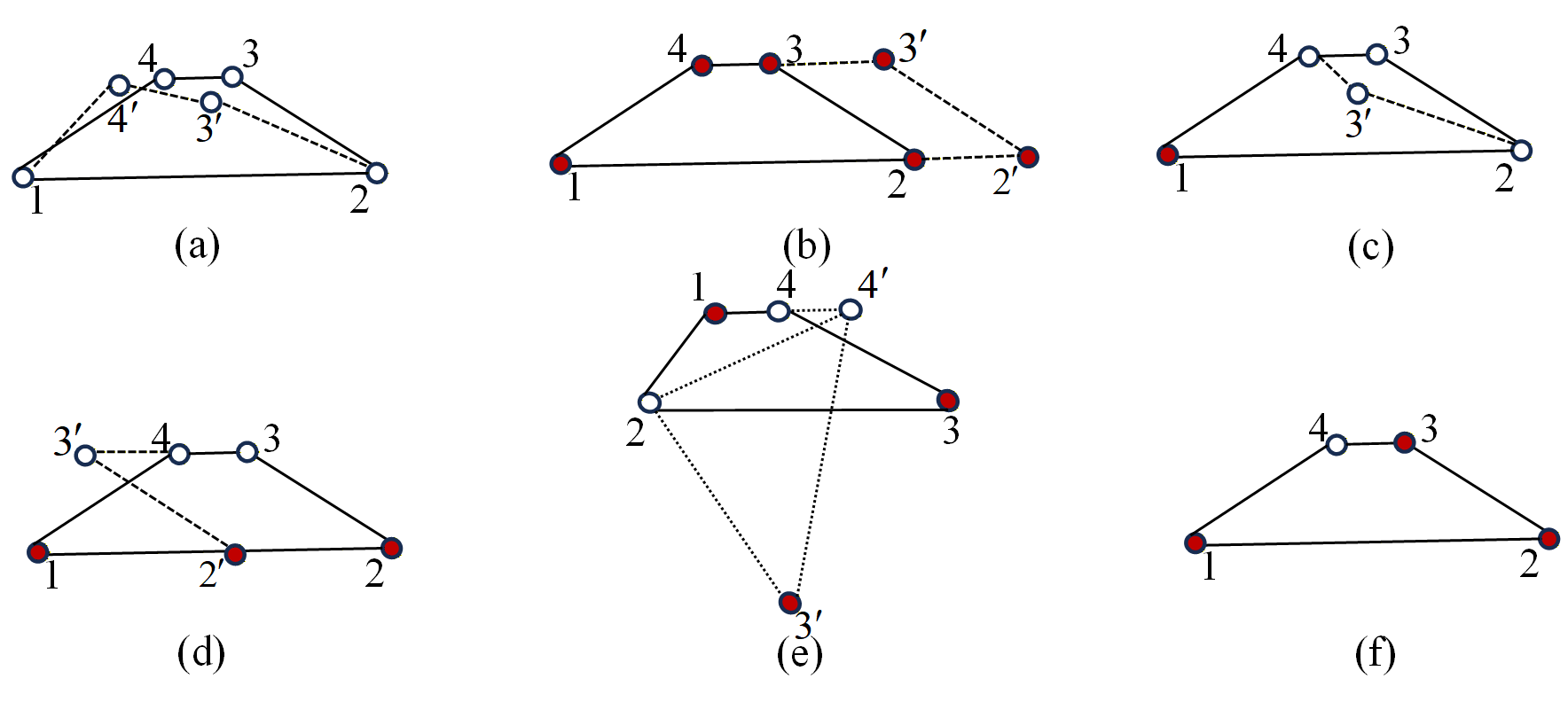}}
\caption{Examples of quadrilateral frameworks. The dots in red represent vertices in $\mathcal{V}_{A}$, while hollow dots represent vertices in $\mathcal{V}_{D}$. (a) A pure RoD constrained framework that is not infinitesimally RoD rigid. (b) A pure SA constrained framework that is not infinitesimally SA rigid. (c) A framework with $\mathcal{V}_{A}=\{1\}$ whose shape can not be uniquely determined under the given bipartition. (d) A framework with $\mathcal{V}_{A}=\{1,2\}$  whose shape can not be uniquely determined under the given bipartition. The coordinates of vertices $1,2,3,4,2^{\prime},3^{\prime}$ are $(0,0),(4,0),(3,1),(2,1),(2,0),(1,1)$, respectively. (e) A framework with $\mathcal{V}_{A}=\{1,3\}$  whose shape can not be uniquely determined under the given bipartition. The coordinates of vertices $1,2,3,4,3^{\prime},4^{\prime}$ are $(1,\sqrt{3}),(0,0),(4,0),(2,\sqrt{3}),(2,-2\sqrt{3}),(3,\sqrt{3})$, respectively. (f) A framework with $\mathcal{V}_{A}=\{1,2,3\}$  whose shape can be uniquely determined under the given bipartition.}
\label{lf13}
\end{figure}
However, the shape of the quadrilateral framework constrained by SAs and RoDs in Figure $\ref{lf13}(f)$ can be uniquely determined up to uniform rotations, translations, and scalings as long as the three vertices in $\mathcal{V}_{A}$ are non-collinear. 
This simple example indicates that mixing SA and RoD perceptions appropriately can guarantee the shape uniqueness even if it is neither RoD rigid nor SA rigid. 

We will introduce definitions in SA-RoD rigidity theory that are analogous to those in distance rigidity theory. The  {\it SA-RoD rigidity function} is defined as 
\begin{equation}\label{def:rigf}
f_{\mathcal{G}(A,D)}(p)\triangleq(\ldots,\alpha_{rst},\ldots,\kappa_{ijk},\ldots) ^{\top}\in\mathbb{R}^{|\mathcal{T}_{\mathcal{G}}|},\ (r,s,t)\in\mathcal{T}_{A},\ (i,j,k)\in\mathcal{T}_{D}.
\end{equation}

\begin{definition}
A framework $(\mathcal{G}(A,D),p)$ is said to be SA-RoD rigid if there exists an open neighborhood $U_{p}$ of $p$ such that $$f^{-1}_{\mathcal{G}(A,D)}(f_{\mathcal{G}(A,D)}(p))\cap U_{p}=f^{-1}_{\mathcal{K}(A,D)}(f_{\mathcal{K}(A,D)}(p))\cap U_{p}.$$ Otherwise, it is said to be flexible. $(\mathcal{G}(A,D),p)$ is called globally SA-RoD rigid if $$f^{-1}_{\mathcal{G}(A,D)}(f_{\mathcal{G}(A,D)}(p))=f^{-1}_{\mathcal{K}(A,D)}(f_{\mathcal{K}(A,D)}(p)).$$
\end{definition}

The  {\it SA-RoD rigidity matrix} is defined as
 \begin{equation}
R_{\mathcal{G}(A,D)}(p)\triangleq\frac{\partial f_{\mathcal{G}(A,D)}(p)}{\partial p}\in\mathbb{R}^{|\mathcal{T_{\mathcal{G}}}|\times 2n}.
\end{equation}
The  {\it infinitesimal SA-RoD motion} refers to the motion $\delta p$ maintaining the rigidity function $f_{\mathcal{G}(A,D)}(p)$, i.e.,
\begin{equation}
 \dot{f}_{\mathcal{G}(A,D)}(p)=\frac{\partial f_{\mathcal{G}(A,D)}}{\partial p}\delta p=R_{\mathcal{G}(A,D)}(p)\delta p=0,
\end{equation} where $\delta p=(\delta p_{1}^{\top},\delta p_{2}^{\top},\ldots,\delta p_{n}^{\top})^{\top},\ \delta p_{i}=\dot{p}_{i}$ is the velocity of $p_{i}$. Note that uniform rotations, translations, and scalings always preserve the SA-RoD constraints of a framework. Therefore, an infinitesimal SA-RoD motion is said to be trivial if it corresponds to a combination of uniform translations, rotations and scalings of the framework. According to \cite[Lemma 3.1]{lyjbb8}, the dimension of the trivial SA-RoD infinitesimal motion space in $\mathbb{R}^{2}$ is 4. 

\begin{definition}\label{def:inr}
A framework $(\mathcal{G}(A,D),p)$ is said to be infinitesimally SA-RoD rigid if all infinitesimal
SA-RoD motions of the framework are trivial.
\end{definition}

To obtain a concise form of the rigidity matrix, let $e_{ij}=p_{j}-p_{i},\ b_{ij}=\frac{e_{ij}}{||e_{ij}||}$. For derivatives of SA, we have 
\begin{multline}\label{eq:sar}
\dot{\alpha}_{rst}=-[\frac{b_{rs}}{||e_{rs}||}-\frac{b_{rt}}{||e_{rt}||}]^{\top}\mathcal{R}(\pi/2)\dot{p}_{r}+\frac{b_{rs}^{\top}}{||e_{rs}||}\mathcal{R}(\pi/2)\dot{p}_{s}
-\frac{b_{rt}^{\top}}{||e_{rt}||}\mathcal{R}(\pi/2)\dot{p}_{t},\ (r,s,t)\in\mathcal{T}_{A}.
\end{multline}
A direct calculation on derivatives of RoD provides that
\begin{align}\label{eq:rodr}
\dot{\kappa}_{ijk}&=\kappa_{ijk}\{[\frac{b_{ij}}{||e_{ij}||}-\frac{b_{ik}}{||e_{ik}||}]^{\top}\dot{p}_{i}-\frac{b_{ij}^{\top}}{||e_{ij}||}\dot{p}_{j}+\frac{b_{ik}^{\top}}{||e_{ik}||}\dot{p}_{k}\},\ (i,j,k)\in\mathcal{T}_{D}.
\end{align} 


The following proposition can be established by similar arguments in \cite[Lemma 1]{lyjbb10}.
\begin{proposition}
The SA-RoD rigidity matrix $R_{\mathcal{G}(A,D)}(p)$ can be expressed as
\begin{equation}
R_{\mathcal{G}({A,D})}(p)=[\bar{R}_{A}^{\top}(p),\bar{R}_{D}^{\top}(p)]^{\top}\bar{H},
\end{equation}
where $\bar{H}=H\otimes I_{2}$, $\bar{R}_{A}(p)\in\mathbb{R}^{|\mathcal{T}_{A}|\times 2|\mathcal{E}|}$ and $\bar{R}_{D}(p)\in\mathbb{R}^{|\mathcal{T}_{D}|\times 2|\mathcal{E}|}$ can be written as
\begin{equation}\label{IRM2}
    \bordermatrix{%
&\ldots	& b_{rs}\ & \ldots     & b_{rt} & \ldots\cr
 \ldots &\ldots & \ldots & \ldots & \ldots & \ldots \cr
	\alpha_{rst}&\mathbf{0}  & \frac{b^{\top}_{rs}\mathcal{R}{(\frac{\pi}{2})}}{||e_{rs}||}& \mathbf{0}& -\frac{b^{\top}_{rt}\mathcal{R}{(\frac{\pi}{2})}}{||e_{rt}||} & \mathbf{0} \cr
 \ldots &\ldots & \ldots & \ldots & \ldots & \ldots 
    },
\end{equation}
\begin{equation}\label{IRM1}
    \bordermatrix{%
&\ldots	& b_{ij}\ & \ldots     & b_{ik} & \ldots\cr
 \ldots &\ldots & \ldots & \ldots & \ldots & \ldots \cr
	\kappa_{ijk}&\mathbf{0}  & -\kappa_{ijk}\frac{b^{\top}_{ij}}{||e_{ij}||}& \mathbf{0}& \kappa_{ijk}\frac{b^{\top}_{ik}}{||e_{ik}||} & \mathbf{0} \cr
 \ldots &\ldots & \ldots & \ldots & \ldots & \ldots 
    }.
\end{equation}
\end{proposition}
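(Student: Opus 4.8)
The plan is to verify the claimed factorization directly from the two derivative formulas, exactly as in the proof of Lemma 1 in \cite{lyjbb10} for the pure SA case. First I would fix an orientation of $\mathcal{G}$ so that the incidence matrix $H$ is well-defined, and recall that $\bar{H} = H \otimes I_2 \in \mathbb{R}^{2m \times 2n}$ acts on a configuration velocity $\delta p \in \mathbb{R}^{2n}$ by producing the stacked edge-velocity vector whose block indexed by edge $(i,j)$ is $\pm(\delta p_j - \delta p_i)$, the sign depending on the chosen orientation. The point of introducing the intermediate matrices $\bar{R}_A$ and $\bar{R}_D$ is that the SA and RoD derivatives, written out in \eqref{IRM2}-\eqref{IRM1}, depend on the configuration velocities only through the edge differences $\dot e_{rs} = \dot p_s - \dot p_r$, etc.; so each row of $R_{\mathcal{G}(A,D)}(p)$ naturally factors through the edge space.

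The key steps, in order, are as follows. (1) Take an arbitrary row of $R_{\mathcal{G}(A,D)}(p)$ corresponding to $\alpha_{rst} \in \mathcal{T}_A$. From the derivative formula for $\dot\alpha_{rst}$, the contribution is $\frac{b_{rs}^\top \mathcal{R}(\pi/2)}{\|e_{rs}\|}(\dot p_s - \dot p_r) - \frac{b_{rt}^\top \mathcal{R}(\pi/2)}{\|e_{rt}\|}(\dot p_t - \dot p_r)$; rewriting this with explicit edge variables $\dot e_{rs}, \dot e_{rt}$ shows the row equals the corresponding row of $\bar{R}_A$ (placing $\frac{b_{rs}^\top \mathcal{R}(\pi/2)}{\|e_{rs}\|}$ in the block column for edge $(r,s)$ and $-\frac{b_{rt}^\top \mathcal{R}(\pi/2)}{\|e_{rt}\|}$ in the block column for edge $(r,t)$, zeros elsewhere) composed with $\bar{H}$, once one checks the orientation signs are absorbed consistently into the $\pm$ in both $\bar{H}$ and the placement of the blocks. (2) Repeat verbatim for a row corresponding to $\kappa_{ijk} \in \mathcal{T}_D$, using $\dot\kappa_{ijk}$; the same bookkeeping yields the row of $\bar{R}_D$ times $\bar{H}$. (3) Stacking all $|\mathcal{T}_A|$ rows for $\mathcal{V}_A$ on top of all $|\mathcal{T}_D|$ rows for $\mathcal{V}_D$ gives $R_{\mathcal{G}(A,D)}(p) = [\bar R_A^\top(p), \bar R_D^\top(p)]^\top \bar H$, with the stated dimensions $\bar R_A \in \mathbb{R}^{|\mathcal{T}_A| \times 2m}$ and $\bar R_D \in \mathbb{R}^{|\mathcal{T}_D| \times 2m}$ forced by the counting of triplets and edges.

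The only genuine subtlety — the part I would be most careful about — is the consistency of edge orientations between the sign convention in $H$ and the sign convention used when writing the rows of $\bar R_A$ and $\bar R_D$. Since an edge $(r,s)$ may be oriented either as $r \to s$ or $s \to r$ in $H$, one must either fix a convention (say, the edge-difference block of $\bar H$ for edge $(r,s)$ with tail $r$ is $\delta p_s - \delta p_r$) and then correspondingly insert $+\frac{b_{rs}^\top \mathcal{R}(\pi/2)}{\|e_{rs}\|}$ in that block column, or observe that reversing an edge's orientation negates both the $\bar H$ block and the corresponding $\bar R$ block so the product is invariant. I would state this invariance explicitly, noting also that $b_{rs}$ and $\|e_{rs}\|$ are orientation-independent so only the overall sign per edge-block is at issue. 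Everything else is the routine termwise matching of the two derivative expressions against the displayed bordered matrices, which I would not grind through in full; the argument is structurally identical to \cite{lyjbb10}, the new ingredient being merely that the RoD rows, carrying the extra scalar factor $\kappa_{ijk}$, slot into the same edge-space factorization because $\dot\kappa_{ijk}$ is likewise a linear functional of $\dot e_{ij}$ and $\dot e_{ik}$.
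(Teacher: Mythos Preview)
Your proposal is correct and is precisely the approach the paper intends: the paper does not spell out a proof but simply says the proposition follows ``by a similar proof of Lemma 1 in \cite{lyjbb10},'' which is exactly the row-by-row edge-space factorization you describe. Your treatment of the orientation/sign consistency is appropriate and, if anything, more careful than what the paper records.
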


All triplets corresponding to vertex $i$ are subject to constraints induced by $\mathcal{T}_{A}$ or $\mathcal{T}_{D}$
. Nevertheless, we stress that there is a considerable degree of redundancy in these quantities. This is due to a fundamental observation: for any $j,k,l\in\mathcal{N}_{i}$ satisfying $j<k<l$, it always holds that $\alpha_{ijk}+\alpha_{ikl}=\alpha_{ijl}, \kappa_{ijk}\kappa_{ikl}=\kappa_{ijl}$.  
\begin{lemma}\label{lemm01}
$\rank(R_{\mathcal{G}(A,D)}(p))\leq 2m-n$.
\end{lemma}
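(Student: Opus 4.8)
The plan is to bound the rank by partitioning the rows of $R_{\mathcal{G}(A,D)}(p)$ according to the apex vertex $u$ of the triplet $(u,v,w)$ that indexes them, and to show that each such block of rows spans a subspace of $\mathbb{R}^{2n}$ of dimension at most $\deg(u)-1$. By Assumption 2 every vertex is the apex of triplets of exactly one kind (SA rows if $u\in\mathcal V_A$, RoD rows if $u\in\mathcal V_D$), so the row space of $R_{\mathcal{G}(A,D)}(p)$ is the sum of these $n$ per-vertex subspaces. Subadditivity of dimension together with the handshake identity $\sum_{i\in\mathcal V}\deg(i)=2m$ then gives $\rank(R_{\mathcal{G}(A,D)}(p))\le\sum_{i\in\mathcal V}(\deg(i)-1)=2m-n$.

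For the per-vertex bound, fix $i$ and order its neighbours $j_1<j_2<\dots<j_d$ with $d=\deg(i)$. In the SA case ($i\in\mathcal V_A$), set $c_{ij}\triangleq\frac{b_{ij}^{\top}}{\|e_{ij}\|}\mathcal R(\pi/2)\in\mathbb R^{1\times2}$; the derivative formula for $\dot\alpha_{rst}$ shows that the row $\rho^{(i)}_{j_aj_b}$ indexed by $(i,j_a,j_b)$ carries $-(c_{ij_a}-c_{ij_b})$ in the block of $p_i$, $c_{ij_a}$ in the block of $p_{j_a}$, $-c_{ij_b}$ in the block of $p_{j_b}$, and zeros elsewhere. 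A direct telescoping check yields the row identity
\[
\rho^{(i)}_{j_aj_b}=\sum_{l=a}^{b-1}\rho^{(i)}_{j_lj_{l+1}},
\]
so every SA row at $i$ lies in $\spann\{\rho^{(i)}_{j_lj_{l+1}}:1\le l\le d-1\}$, a space of dimension at most $d-1$. In the RoD case ($i\in\mathcal V_D$), the formula for $\dot\kappa_{ijk}$ shows that the row indexed by $(i,j_a,j_b)$ equals $\kappa_{ij_aj_b}\,\sigma^{(i)}_{j_aj_b}$, where (with $d_{ij}\triangleq b_{ij}^{\top}/\|e_{ij}\|$) $\sigma^{(i)}_{j_aj_b}$ is the $\kappa$-free vector carrying $d_{ij_a}-d_{ij_b}$, $-d_{ij_a}$, $d_{ij_b}$ in the blocks of $p_i,p_{j_a},p_{j_b}$. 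The same telescoping gives $\sigma^{(i)}_{j_aj_b}=\sum_{l=a}^{b-1}\sigma^{(i)}_{j_lj_{l+1}}$, and since all $\kappa$'s are positive scalars (Assumption 1), the RoD rows at $i$ all lie in $\spann\{\sigma^{(i)}_{j_lj_{l+1}}:1\le l\le d-1\}$, again of dimension at most $d-1$. This is precisely the infinitesimal counterpart of the identities $\alpha_{ijk}+\alpha_{ikl}=\alpha_{ijl}$ and $\kappa_{ijk}\kappa_{ikl}=\kappa_{ijl}$ recorded above.

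The step that requires care is the verification of these telescoping identities at the level of rows of the rigidity matrix: one must respect the ordering convention $v<w$ in the triplet sets (so that ``consecutive-neighbour'' triplets are well defined), track the $\mathcal R(\pi/2)$-induced signs in the SA block, and handle the multiplicative prefactors $\kappa_{ijk}$ in the RoD block, which obstruct a naive linear dependence among the RoD rows themselves but disappear upon passing to the $\sigma$-vectors. Once the per-vertex bounds are established, summation is immediate. One minor caveat: if $\mathcal G$ has isolated vertices the argument only yields $\rank(R_{\mathcal{G}(A,D)}(p))\le 2m-n+(\#\,\text{isolated vertices})$; since such vertices contribute no triplets, one may assume $\deg(i)\ge1$ for all $i$, and then the bound is exactly $2m-n$.
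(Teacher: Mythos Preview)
Your proposal is correct and follows essentially the same strategy as the paper: partition the rows of $R_{\mathcal{G}(A,D)}(p)$ by apex vertex, show that the rows with apex $i$ span a subspace of dimension at most $\deg(i)-1$ using the algebraic redundancies among SAs/RoDs at a single vertex, and then sum via the handshake identity. The only cosmetic difference is the choice of spanning set---the paper fixes one neighbour $j_0$ and uses the ``star'' family $\{\partial\kappa_{ij_0k}/\partial p:k\neq j_0\}$ (respectively the SA analogue), while you use the ``consecutive-neighbour'' family via telescoping---but both yield $\deg(i)-1$ generators and the argument is otherwise identical.
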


The proof is presented in Appendix $\ref{pol1}$. 
The following characterization of infinitesimal SA-RoD rigidity can be established with a proof similar to that in \cite{lyjb8,lyjb9,lyjbb8}. 

\begin{theorem}\label{infi}
Given a framework $(\mathcal{G}(A,D),p)$, then the following statements are equivalent:\\
1) $(\mathcal{G}(A,D),p)$ is infinitesimally SA-RoD rigid;\\
2) $\rank (R_{\mathcal{G}(A,D)}(p))=2n-4$;\\
3) $\nulll(R_{\mathcal{G}(A,D)}(p))=\spann\{\mathbf{1}_{n}\otimes (1,0)^\top,\mathbf{1}_{n}\otimes (0,1)^\top,({I}_{n}\otimes \mathcal{R}(\pi/2))p, p\}$.
\end{theorem}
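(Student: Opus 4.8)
The plan is to establish the cycle $1)\Leftrightarrow 2)\Leftrightarrow 3)$ by first pinning down the trivial motion space and then relating it to the rank of $R_{\mathcal{G}(A,D)}(p)$. The starting point is the remark preceding the theorem: uniform translations, rotations, and (infinitesimal) scalings always preserve SA and RoD constraints, so the three vector fields $\mathbf{1}_{n}\otimes I_{2}$ (columns spanning translations), $(I_{n}\otimes\mathcal{R}(\pi/2))p$ (infinitesimal rotation), and $p$ (infinitesimal scaling) all lie in $\nulll(R_{\mathcal{G}(A,D)}(p))$. I would first verify directly that these span a $4$-dimensional subspace whenever Assumption~1 holds and the configuration is not entirely collinear — this is exactly the content cited from Lemma~3.1 of \cite{lyjbb8}, so I can invoke it. Thus $\spann\{\mathbf{1}_{n}\otimes I_{2},(I_{n}\otimes\mathcal{R}(\pi/2))p,p\}$ is always a $4$-dimensional subspace of the null space, giving the universal bound $\rank(R_{\mathcal{G}(A,D)}(p))\le 2n-4$.

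For $1)\Leftrightarrow 3)$: by Definition of infinitesimal SA-RoD rigidity, the framework is infinitesimally rigid precisely when every infinitesimal motion is trivial, i.e. $\nulll(R_{\mathcal{G}(A,D)}(p))$ is contained in (hence equal to, by the previous paragraph) the trivial motion space. This is essentially a restatement, modulo the identification of the trivial motion space with the indicated span, which I handle as above. For $2)\Leftrightarrow 3)$: since the trivial space is always a $4$-dimensional subspace of the null space, we have $\nulll(R_{\mathcal{G}(A,D)}(p))$ equal to that span if and only if $\dim\nulll(R_{\mathcal{G}(A,D)}(p))=4$, which by rank–nullity on a matrix with $2n$ columns is equivalent to $\rank(R_{\mathcal{G}(A,D)}(p))=2n-4$. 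Combining these two equivalences closes the cycle.

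The step I expect to require the most care is confirming that the three trivial vector fields are \emph{linearly independent} and that no additional trivial motions arise — in particular ruling out degeneracies under Assumption~1. Linear independence of $\mathbf{1}_{n}\otimes I_{2}$ (two independent translation directions), the rotation field, and the scaling field fails exactly when the configuration is collinear (then the rotation and scaling fields become dependent on translations and each other), so Assumption~1 together with $n\ge 3$ and the non-degeneracy implicit in the setup must be used here; I would point to the cited lemma rather than redo the elementary but slightly fiddly argument. A secondary point worth spelling out is why the RoD rows contribute the scaling direction to the null space in the same way the SA rows do — this follows from the explicit forms \eqref{IRM1} and \eqref{IRM2}: substituting $\delta p = p$ into $\dot{\kappa}_{ijk}$ and $\dot{\alpha}_{rst}$ and using $b_{ij}^{\top}(p_j-p_i)=\|e_{ij}\|$ (and orthogonality $b_{ij}^{\top}\mathcal{R}(\pi/2)(p_j-p_i)=0$ for the SA case) makes both vanish, so the unified rigidity matrix $R_{\mathcal{G}(A,D)}(p)=[\bar R_A^{\top},\bar R_D^{\top}]^{\top}\bar H$ of Proposition~3 annihilates all three fields simultaneously.
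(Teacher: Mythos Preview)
Your approach is correct and matches the paper's: the paper does not spell out a proof but simply remarks that it follows the standard route in the cited references, which is exactly the trivial-motion-space-plus-rank--nullity argument you outline.

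One correction worth making: your aside that linear independence of the four trivial fields ``fails exactly when the configuration is collinear'' is not right. Under Assumption~1 alone (no two points coincide), suppose
\[
a_{1}(\mathbf{1}_{n}\otimes e_{1})+a_{2}(\mathbf{1}_{n}\otimes e_{2})+a_{3}(I_{n}\otimes\mathcal{R}(\pi/2))p+a_{4}p=0.
\]
Subtracting the $i$-th and $j$-th component equations for any $i\neq j$ gives $a_{3}\mathcal{R}(\pi/2)(p_{i}-p_{j})+a_{4}(p_{i}-p_{j})=0$; since $p_{i}\neq p_{j}$ and the two vectors are orthogonal and nonzero, this forces $a_{3}=a_{4}=0$, and then $a_{1}=a_{2}=0$. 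Collinearity plays no role. This is precisely the computation carried out in Step~3 of the paper's proof of Lemma~\ref{lem2nn}, so you can cite that instead of hedging about degeneracy.
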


Here we revisit frameworks in Figure $\ref{lf13}$. Frameworks in Figures $\ref{lf13}(c)$, $\ref{lf13}(d)$, and $\ref{lf13}(e)$ are (infinitesimally) SA-RoD rigid but not globally SA-RoD rigid. In Figure $\ref{lf13}(d)$, the SA-RoD rigidity functions for quadrilaterals $\Diamond 1234$ and $\Diamond 12^{\prime}3^{\prime}4$ yield identical values. Similar results hold for quadrilaterals $\Diamond 1234$ and $\Diamond 123^{\prime}4^{\prime}$ in Figure $\ref{lf13}(e)$. The framework in Figure $\ref{lf13}(f)$ is both (infinitesimally) SA-RoD rigid and globally SA-RoD rigid.
\begin{figure}[!h]
\centerline{\includegraphics[width=\columnwidth]{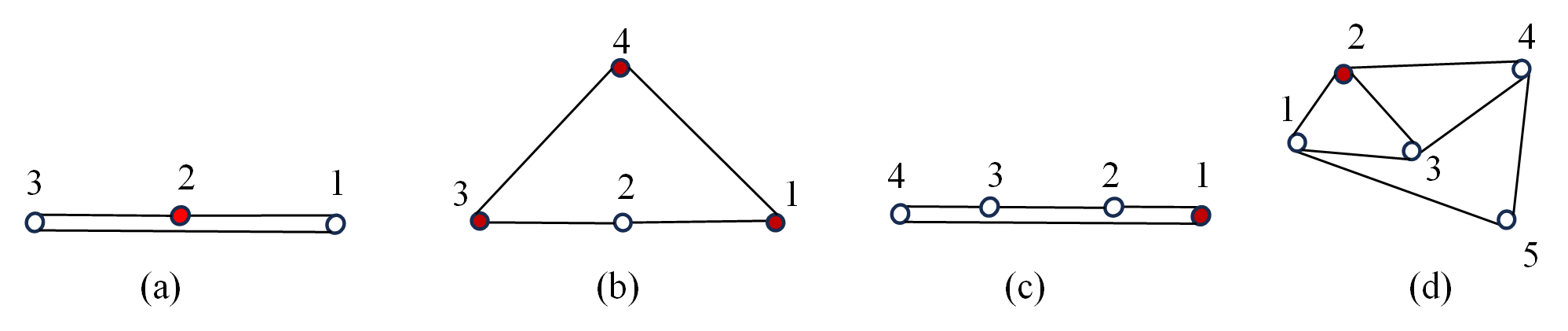}}
\caption{(a) A globally and infinitesimally SA-RoD rigid framework corresponding to a degenerate triangle with $\mathcal{V}_{A}=\{2\}$. (b) A globally and infinitesimally SA-RoD rigid framework corresponding to a quadrilateral with $\mathcal{V}_{A}=\{1,3,4\}$. (c) A globally SA-RoD rigid framework that is not infinitesimally SA-RoD rigid corresponding to a degenerate quadrilateral with $\mathcal{V}_{A}=\{1\}$. (d) An infinitesimally SA-RoD rigid framework that is not globally SA-RoD rigid corresponding to a Laman graph with $\mathcal{V}_{A}=\{2\}$. The dots in red represent vertices in $\mathcal{V}_{A}$.} 
\label{lf9}
\end{figure}
Figure $\ref{lf9}$ also presents some SA-RoD constrained frameworks. We can see that even a degenerate triangle can become globally SA-RoD rigid. A quadrilateral is also globally SA-RoD rigid if $|\mathcal{V}_{D}|=1$ and three vertices in $\mathcal{V}_{A}$ are non-collinear. Figure $\ref{lf9}(c)$ provides a quadrilateral with collinear four vertices. Due to the collinearity, global rigidity can be derived from RoD constraints $\kappa_{213},\kappa_{324},\kappa_{413}$ and SA constraint $\alpha_{124}$. The framework in Figure $\ref{lf9}(d)$ is infinitesimally SA-RoD rigid but not globally SA-RoD rigid. There exists a reflection of vertex 5 with respect to the line connecting vertices 1 and 4, which can preserve the SA-RoD constraints. 

To investigate the relationships among various concepts concerning SA-RoD rigidity, our analysis starts with $(\mathcal{K}(A,D),p)$. For a given configuration $p\in\mathbb{R}^{2n}$ satisfying Assumption 1, the set $S_{p}$ of configurations with the same shape as $p$ up to uniform rotations, translations, and scalings is defined as
\begin{align}\label{defeq}
S_{p}\triangleq\{q\in\mathbb{R}^{2n}:q=\mathbf{1}_{n}\otimes\xi+c(I_{n}\otimes\mathcal{R}(\theta))p,
\xi\in\mathbb{R}^{2},\theta\in [0,2\pi),c\in\mathbb{R}^{+}\}.
\end{align}
It is noteworthy that an equivalent definition of $S_{p}$ permits 
$c\in\mathbb{R}\setminus\{0\}$ in $(\ref{defeq})$. The restriction of 
$c$ to positive values is imposed to ensure the uniqueness of parameters $c$ and $\theta$.
 Specifically, given any $q\in S_{p}$, parameters $c>0$ and $ \theta\in [0,2\pi)$ satisfying $q=\mathbf{1}_{n}\otimes\xi+c(I_{n}\otimes\mathcal{R}(\theta))p$ are unique.
\begin{lemma}\label{lem1o}
Given a framework consisting of a complete graph $\mathcal{K}$ and a configuration $p\in\mathbb{R}^{2n}$, it holds that
$
f^{-1}_{\mathcal{K}(A,D)}(f_{\mathcal{K}(A,D)}(p))=S_{p}
$ for any bipartition.
\end{lemma}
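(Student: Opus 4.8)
The plan is to prove the two inclusions $S_p \subseteq f^{-1}_{\mathcal{K}(A,D)}(f_{\mathcal{K}(A,D)}(p))$ and $f^{-1}_{\mathcal{K}(A,D)}(f_{\mathcal{K}(A,D)}(p)) \subseteq S_p$ separately. The first inclusion is the easy direction: I would simply verify that signed angles and ratios of distances are invariant under uniform translations, rotations, and positive scalings. For a configuration $q = \mathbf{1}_n \otimes \xi + c(I_n \otimes \mathcal{R}(\theta))p$, translations cancel in every difference $q_j - q_i = c\,\mathcal{R}(\theta)(p_j - p_i)$; the scalar $c>0$ cancels in the ratio $\kappa_{ijk} = \|q_k - q_i\|/\|q_j - q_i\|$; and $\mathcal{R}(\theta)$ preserves the angle between two vectors with orientation, so each $\alpha_{rst}$ is unchanged. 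Hence $f_{\mathcal{K}(A,D)}(q) = f_{\mathcal{K}(A,D)}(p)$, giving $S_p \subseteq f^{-1}_{\mathcal{K}(A,D)}(f_{\mathcal{K}(A,D)}(p))$.

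For the reverse inclusion, take any $q$ with $f_{\mathcal{K}(A,D)}(q) = f_{\mathcal{K}(A,D)}(p)$; I must produce $\xi, \theta, c$ exhibiting $q \in S_p$. The key point is that on the complete graph every triple appears, so I have access to \emph{all} signed angles among edges incident to vertices of $\mathcal{V}_A$ and \emph{all} distance ratios among edges incident to vertices of $\mathcal{V}_D$. First I would normalize: by translating, assume WLOG that some fixed vertex, say vertex $1$, satisfies $q_1 = p_1$; pick an anchor vertex $a \in \mathcal{V}_A$ (nonempty by Assumption 2) and an edge $(a,v)$ — by Assumption 1 the vectors $p_v - p_a$ and $q_v - q_a$ are nonzero, so there is a unique $\theta$ and $c>0$ rotating and scaling $q_v - q_a$ onto $p_v - p_a$. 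It then remains to show that after applying this single rigid-plus-scaling transformation, \emph{every} vertex of $q$ coincides with the corresponding vertex of $p$. The mechanism is: signed-angle data at $a$ (and at other $\mathcal{V}_A$ vertices) pins down the \emph{directions} of all edges relative to the reference edge $(a,v)$, while RoD data at $\mathcal{V}_D$ vertices propagates the \emph{lengths} — and because Assumption 2 guarantees both $\mathcal{T}_A$ and $\mathcal{T}_D$ are nonempty and the graph is complete, every edge direction and every edge length is determined up to the common scalar $c$. Concretely, for any vertex $w$, its position is reconstructed from $a$ by following a path whose edge directions come from SA constraints at $\mathcal{V}_A$-vertices and whose length ratios come from RoD constraints at $\mathcal{V}_D$-vertices; consistency (closing polygons) is automatic since $q$ and $p$ share all these constraints.

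The main obstacle — and the step I would spend the most care on — is the reconstruction argument in the reverse inclusion, specifically arguing that the mixed SA/RoD data on $\mathcal{K}(A,D)$ genuinely determines \emph{all} edge vectors up to a global rotation and scaling. The subtlety is that SA constraints at a vertex $u \in \mathcal{V}_A$ control relative directions of edges \emph{at} $u$, and RoD constraints at $u \in \mathcal{V}_D$ control relative lengths of edges \emph{at} $u$, so one needs to chain these: since every vertex is either in $\mathcal{V}_A$ or $\mathcal{V}_D$ and the graph is complete, a careful induction over vertices — anchoring at edge $(a,v)$, then adding vertices one at a time, using the SA data at an $\mathcal{V}_A$-endpoint or the RoD data together with an already-fixed neighboring edge direction — shows the absolute position of each added vertex is forced. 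One must also handle potential degeneracies (collinear configurations) allowed under Assumption 1: there the direction from an RoD vertex is determined only up to sign, but the global ``scaling'' parameter range $c \in \mathbb{R}\setminus\{0\}$ (noted in the remark preceding the lemma) absorbs exactly the ambiguity that a reflection through a line of collinear points would introduce, so no genuine counterexample arises. Finally I would invoke the uniqueness statement already made in the paragraph before the lemma — that $(c,\theta)$ with $c>0$ are unique for a given $q \in S_p$ — to confirm the parametrization is well-defined, completing the equality $f^{-1}_{\mathcal{K}(A,D)}(f_{\mathcal{K}(A,D)}(p)) = S_p$.
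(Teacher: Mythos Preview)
Your overall architecture --- prove two inclusions, normalize by fixing a reference edge, then reconstruct every other vertex --- matches the paper's. But the heart of the reverse inclusion is where your sketch diverges and becomes shaky.

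The paper makes one sharp choice you omit: it takes the reference edge $(i,j)$ with $i\in\mathcal V_A$ \emph{and} $j\in\mathcal V_D$. With that choice it does not chain along paths at all; for an arbitrary third vertex $k$ it works entirely inside the single triangle $\Delta ijk$. If $k\in\mathcal V_A$, the two signed angles at $i$ and $k$ force the third, and the sine law (or a direct computation in the collinear case) gives every ratio. If $k\in\mathcal V_D$, the two ratios at $j$ and $k$ fix all three side ratios, and the signed angle at $i$ kills the reflection ambiguity. Either way the identity $q_i-q_k=c\,\mathcal R(\theta)(p_i-p_k)$ drops out in one step. Your path/induction scheme is not wrong in spirit, but as written it never explains how a vertex adjacent only to $\mathcal V_A$-neighbors acquires a length, or how a vertex adjacent only to $\mathcal V_D$-neighbors acquires a direction --- the paper's triangle device, anchored at a mixed edge, is exactly what resolves this cleanly.

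Your treatment of degeneracies contains an actual error. You write that ``the direction from an RoD vertex is determined only up to sign'' --- RoD data carries no directional information whatsoever, so this sentence is not meaningful --- and that allowing $c\in\mathbb R\setminus\{0\}$ ``absorbs exactly the ambiguity that a reflection through a line of collinear points would introduce''. Negative $c$ is rotation by $\pi$, not a line reflection, and in any case a reflection through a line containing collinear points fixes those points, so there is nothing to absorb. The paper instead handles the degenerate triangle explicitly: when $\alpha_{ijk}=\alpha_{kij}=0$ it computes the missing ratios $\kappa_{ijk},\kappa_{kij}$ directly from $\kappa_{jik}$, so no sign or reflection issue ever arises. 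You should replace your degeneracy paragraph with that concrete computation.
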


The proof of Lemma $\ref{lem1o}$ is provided in in Appendix $\ref{pK}$.  Then we show that  under Assumption 1, $S_{p}$ is a 4-dimensional smooth manifold even if $p$ is degenerate. The detailed proof can be found in Appendix \ref{diffe}. 
\begin{lemma}\label{lem2nn}
$S_{p}$ is a 4-dimensional smooth manifold and is diffeomorphic to $\mathbb{S}^{1}\times\mathbb{R}^{2}\times \mathbb{R}^{+}$.
\end{lemma}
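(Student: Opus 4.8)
The plan is to exhibit $S_p$ as the image of an explicit smooth embedding with domain $\mathbb{S}^1\times\mathbb{R}^2\times\mathbb{R}^+$. Identify $\mathbb{S}^1$ with the rotation group $\{\mathcal{R}(\theta):\theta\in[0,2\pi)\}$, so that the domain is a genuine $4$-dimensional manifold (rather than the half-open interval $[0,2\pi)$), and set
\[
\Phi:\mathbb{S}^1\times\mathbb{R}^2\times\mathbb{R}^+\to\mathbb{R}^{2n},\qquad
\Phi(\theta,\xi,c)=\mathbf{1}_n\otimes\xi+c\,(I_n\otimes\mathcal{R}(\theta))p .
\]
By the very definition of $S_p$ we have $S_p=\Phi(\mathbb{S}^1\times\mathbb{R}^2\times\mathbb{R}^+)$, so it suffices to prove that $\Phi$ is a smooth embedding; then $S_p$ is an embedded $4$-dimensional submanifold of $\mathbb{R}^{2n}$ and $\Phi$ is a diffeomorphism onto $S_p$. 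Smoothness of $\Phi$ is clear, and injectivity of $\Phi$ is exactly the uniqueness of the parameters $(c,\theta)$ (and hence of $\xi=q_1-c\,\mathcal{R}(\theta)p_1$) recorded just before this lemma, which relies on Lemma~\ref{lem1o} and Assumption~1. So the two substantive points are that $\Phi$ is an immersion and that $\Phi$ is a homeomorphism onto its image.

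For the immersion property, the columns of the Jacobian of $\Phi$ at $(\theta,\xi,c)$ are, using $\tfrac{d}{d\theta}\mathcal{R}(\theta)=\mathcal{R}(\theta)\mathcal{R}(\pi/2)$,
\[
\frac{\partial\Phi}{\partial\xi}=\mathbf{1}_n\otimes I_2,\qquad
\frac{\partial\Phi}{\partial c}=(I_n\otimes\mathcal{R}(\theta))p,\qquad
\frac{\partial\Phi}{\partial\theta}=c\,(I_n\otimes\mathcal{R}(\theta)\mathcal{R}(\pi/2))p .
\]
Since $I_n\otimes\mathcal{R}(\theta)$ is invertible, these four vectors are independent iff $\mathbf{1}_n\otimes e_1,\ \mathbf{1}_n\otimes e_2,\ p,\ (I_n\otimes\mathcal{R}(\pi/2))p$ are independent, where $e_1,e_2$ is the standard basis of $\mathbb{R}^2$. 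Suppose $\mathbf{1}_n\otimes w+\beta p+\gamma(I_n\otimes\mathcal{R}(\pi/2))p=0$ for some $w\in\mathbb{R}^2$, $\beta,\gamma\in\mathbb{R}$. Reading off the $i$-th block gives $(\beta I_2+\gamma\mathcal{R}(\pi/2))p_i=-w$ for every $i$. The matrix $M=\beta I_2+\gamma\mathcal{R}(\pi/2)$ satisfies $\det M=\beta^2+\gamma^2$; if $(\beta,\gamma)\neq(0,0)$ then $M$ is invertible and all the $p_i$ are equal, contradicting Assumption~1. Hence $\beta=\gamma=0$, then $w=0$, so $d\Phi$ has rank $4$ everywhere. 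This is precisely where it matters that only Assumption~1 is assumed and not non-degeneracy of $p$: the argument uses only that the $p_i$ are not all equal, never that they affinely span $\mathbb{R}^2$.

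To see that $\Phi$ is a homeomorphism onto $S_p$, I would construct a continuous inverse. Fix $i_0,j_0$ with $p_{i_0}\neq p_{j_0}$ (available by Assumption~1). For $q\in S_p$, $q_{j_0}-q_{i_0}=c\,\mathcal{R}(\theta)(p_{j_0}-p_{i_0})$ with $c>0$, so $c=\|q_{j_0}-q_{i_0}\|/\|p_{j_0}-p_{i_0}\|$ is continuous in $q$; $\mathcal{R}(\theta)\in\mathbb{S}^1$ is the unique rotation taking the unit vector $(p_{j_0}-p_{i_0})/\|p_{j_0}-p_{i_0}\|$ to $(q_{j_0}-q_{i_0})/\|q_{j_0}-q_{i_0}\|$, which is continuous in $q$; and $\xi=q_1-c\,\mathcal{R}(\theta)p_1$. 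This is a continuous left inverse of $\Phi$, so $\Phi$ is a homeomorphism onto $S_p$, hence a smooth embedding, and the lemma follows.

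The only real obstacle is the immersion step in the degenerate case, and it is resolved by the observation $\det(\beta I_2+\gamma\mathcal{R}(\pi/2))=\beta^2+\gamma^2$. Equivalently, $S_p$ is the orbit of $p$ under the proper smooth diagonal action on $\mathbb{R}^{2n}$ of the $4$-dimensional Lie group of orientation-preserving similarity transformations of $\mathbb{R}^2$ (itself diffeomorphic to $\mathbb{S}^1\times\mathbb{R}^2\times\mathbb{R}^+$), but the explicit argument above makes the dependence on Assumption~1 transparent.
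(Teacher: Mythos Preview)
Your proposal is correct and follows essentially the same route as the paper: define the parameterization map from $\mathbb{S}^1\times\mathbb{R}^2\times\mathbb{R}^+$ into $\mathbb{R}^{2n}$, verify bijectivity, and show the Jacobian has rank $4$ by reducing to a linear-independence statement that only needs $p_i\neq p_j$ for some $i\neq j$. The paper's immersion step uses the orthogonality of $\mathcal{R}(\theta+\pi/2)(p_i-p_j)$ and $\mathcal{R}(\theta)(p_i-p_j)$ where you use $\det(\beta I_2+\gamma\mathcal{R}(\pi/2))=\beta^2+\gamma^2$, and the paper concludes via explicit chart construction plus the global constant-rank theorem where you construct a continuous inverse directly---but these are cosmetic variations on the same argument.
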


Based on Lemma $\ref{lem1o}$ and Lemma $\ref{lem2nn}$, we derive the following relationship
between infinitesimal SA-RoD rigidity and SA-RoD rigidity. 
\begin{theorem}\label{the1}
If a framework $(\mathcal{G}(A,D),p)$ is infinitesimally SA-RoD rigid, it is SA-RoD rigid.
\end{theorem}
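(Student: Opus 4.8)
The plan is to follow the classical template used for distance rigidity (and its SA/RoD analogues): show that infinitesimal rigidity forces the solution set of the rigidity equations near $p$ to be exactly the orbit $S_p$, which by Lemma~\ref{lem1o} is precisely $f^{-1}_{\mathcal{K}(A,D)}(f_{\mathcal{K}(A,D)}(p))$, and that is the definition of SA-RoD rigidity. Concretely, I would first invoke Lemma~\ref{lem2nn}: $S_p$ is a $4$-dimensional smooth embedded submanifold of $\mathbb{R}^{2n}$, and since uniform translations, rotations and scalings preserve all SA and RoD constraints, $S_p \subseteq f^{-1}_{\mathcal{G}(A,D)}(f_{\mathcal{G}(A,D)}(p))$. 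The tangent space $T_p S_p$ is $4$-dimensional and, by a direct computation of the generators of $S_p$, equals $\spann\{\mathbf{1}_n\otimes I_2,\ (I_n\otimes\mathcal{R}(\pi/2))p,\ p\}$ — i.e. exactly the space of trivial infinitesimal SA-RoD motions.

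Next I would use the infinitesimal rigidity hypothesis through Theorem~\ref{infi}: it gives $\rank(R_{\mathcal{G}(A,D)}(p)) = 2n-4$, equivalently $\nulll(R_{\mathcal{G}(A,D)}(p)) = T_p S_p$. Consider the map $f_{\mathcal{G}(A,D)}$ restricted to a neighborhood of $p$. Since $R_{\mathcal{G}(A,D)}(p) = Df_{\mathcal{G}(A,D)}(p)$ has constant rank $2n-4$ in a neighborhood of $p$ (rank can only drop on a closed set, and here it is already maximal given Lemma~\ref{lemm01}-type bounds — actually I should be careful: I only know the rank at $p$; I will instead argue via the preimage/constant-rank argument directly, or via a local slice). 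The cleanest route: by the constant rank theorem applied on a possibly smaller neighborhood $U_p$ where the rank stays $\geq 2n-4$ (hence $=2n-4$ by the a priori bound, or by lower semicontinuity of rank combined with the fact that we only need the inclusion one way), the fiber $f^{-1}_{\mathcal{G}(A,D)}(f_{\mathcal{G}(A,D)}(p)) \cap U_p$ is a smooth submanifold whose tangent space at $p$ is $\nulll(R_{\mathcal{G}(A,D)}(p)) = T_p S_p$, and whose dimension is $2n - (2n-4) = 4$.

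Then I would finish by a dimension-plus-inclusion argument. Both $S_p$ and $M := f^{-1}_{\mathcal{G}(A,D)}(f_{\mathcal{G}(A,D)}(p)) \cap U_p$ are smooth $4$-manifolds through $p$ with the same tangent space $T_p S_p$ at $p$, and $S_p \cap U_p \subseteq M$. A connected $4$-manifold cannot properly contain another $4$-manifold with the same tangent space at an interior point, so after shrinking $U_p$ we get $M = S_p \cap U_p = f^{-1}_{\mathcal{K}(A,D)}(f_{\mathcal{K}(A,D)}(p)) \cap U_p$, using Lemma~\ref{lem1o} for the last equality. This is exactly SA-RoD rigidity. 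The main obstacle I anticipate is the regularity/constant-rank step: I only have the rank of $R_{\mathcal{G}(A,D)}$ \emph{at} $p$, so to conclude that the fiber is a manifold near $p$ I need to combine lower semicontinuity of rank (rank $\geq 2n-4$ near $p$) with an upper bound (rank $\leq 2n-4$, which follows from the structure of the rigidity matrix and the $4$-dimensional trivial motion space, cf.\ the reasoning behind Theorem~\ref{infi}) to pin the rank to be locally constant; handling the degenerate-configuration subtleties here — which is precisely why Lemma~\ref{lem2nn} was proved for degenerate $p$ — is the delicate part, and I would lean on the cited proofs in \cite{lyjb8,lyjb9,lyjbb8} for the analogous statements.
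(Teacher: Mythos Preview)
Your proposal is correct and follows essentially the same route as the paper: use $\rank R_{\mathcal{G}(A,D)}(p)=2n-4$ to make the local fiber $f^{-1}_{\mathcal{G}(A,D)}(f_{\mathcal{G}(A,D)}(p))\cap U_p$ a $4$-manifold, identify $f^{-1}_{\mathcal{K}(A,D)}(f_{\mathcal{K}(A,D)}(p))=S_p$ via Lemma~\ref{lem1o}, invoke Lemma~\ref{lem2nn} for $\dim S_p=4$, and conclude equality from the inclusion of two equal-dimensional manifolds. The only cosmetic difference is that the paper outsources the ``fiber is a $4$-manifold'' step to Proposition~2 of \cite{lyjb8}, whereas you spell out the constant-rank argument yourself; your worry about the upper bound is easily resolved since the $4$-dimensional trivial motion space always lies in $\nulll(R_{\mathcal{G}(A,D)}(q))$, forcing $\rank\leq 2n-4$ everywhere.
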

 \begin{proof}
  Based on the infinitesimal SA-RoD rigidity of $(\mathcal{G}(A,D),p)$, it holds that $\rank(R_{\mathcal{G}(A,D)}(p)))=2n-4$. According to \cite[Proposition 2]{lyjb8}, there exists a neighborhood $U$ of $p$ such that $f^{-1}_{\mathcal{G}(A,D)}(f_{\mathcal{G}(A,D)}(p))\cap U$ is a 4-dimensional manifold. Combining Lemma $\ref{lem1o}$ and Lemma $\ref{lem2nn}$,  $f^{-1}_{\mathcal{K}(A,D)}(f_{\mathcal{K}(A,D)}(p))\cap U=S_{p}\cap U$ is a 4-dimensional manifold. Since $f^{-1}_{\mathcal{K}(A,D)}(f_{\mathcal{K}(A,D)}(p))\subseteq f^{-1}_{\mathcal{G}(A,D)}(f_{\mathcal{G}(A,D)}(p))$, we can conclude that  $f^{-1}_{\mathcal{K}(A,D)}(f_{\mathcal{K}(A,D)}(p))\cap U=f^{-1}_{\mathcal{G}(A,D)}(f_{\mathcal{G}(A,D)}(p))\cap U$. Therefore, $(\mathcal{G}(A,D),p)$ is SA-RoD rigid.
 \end{proof}

Note that the converse of this theorem is not true. Figure $\ref{lf9}(c)$ presents a framework that is globally SA-RoD rigid, but not infinitesimally SA-RoD rigid. By definition, global SA-RoD rigidity of a framework implies SA-RoD rigidity. Furthermore, the example in Figure $\ref{lf9}(d)$ indicates that there exist frameworks that are SA-RoD rigid, but not globally SA-RoD rigid.

Based on the established results, we investigate the conditions under which the shape of a framework can be uniquely determined by SA and RoD constraints. Lemma \ref{lem1o} shows that a framework with a complete underlying graph can be uniquely determined by such constraints. In fact, such a framework is globally SA-RoD rigid.  Here we present the following theorem to demonstrate the equivalence between global SA-RoD rigidity and shape uniqueness under SA and RoD constraints. 
\begin{theorem}\label{shp}
For a given framework $(\mathcal{G}(A,D),p)$ in $\mathbb{R}^{2}$, the following statements are equivalent: \\
1) $(\mathcal{G}(A,D),p)$ is globally SA-RoD rigid.\\
2) The shape of $(\mathcal{G}(A,D),p)$ can be uniquely determined by $\{\alpha_{ijk}:(i,j,k)\in\mathcal{T}_{A}\}$ and $\{\kappa_{ijk}:(i,j,k)\in\mathcal{T}_{D}\}$ up to uniform rotations, translations, and scalings.\\
3) All SAs in $\{\alpha_{ijk}:(i,j,k)\in\mathcal{T}_{\mathcal{G}}\}$ and RoDs in $\{\kappa_{ijk}:(i,j,k)\in\mathcal{T}_{\mathcal{G}}\}$ can be uniquely determined by $\{\alpha_{ijk}:(i,j,k)\in\mathcal{T}_{A}\}$ and $\{\kappa_{ijk}:(i,j,k)\in\mathcal{T}_{D}\}$.\\
4) All SAs in $\{\alpha_{ijk}:(i,j,k)\in\mathcal{T}_{\mathcal{K}}\}$ and RoDs in $\{\kappa_{ijk}:(i,j,k)\in\mathcal{T}_{\mathcal{K}}\}$ can be uniquely determined by $\{\alpha_{ijk}:(i,j,k)\in\mathcal{T}_{A}\}$ and $\{\kappa_{ijk}:(i,j,k)\in\mathcal{T}_{D}\}$.
\end{theorem}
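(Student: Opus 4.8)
The plan is to establish the chain of equivalences $1)\Leftrightarrow 2)\Leftrightarrow 3)\Leftrightarrow 4)$, relying on Lemma~\ref{lem1o}, Lemma~\ref{lem:Sp}, and the explicit description $f^{-1}_{\mathcal{K}(A,D)}(f_{\mathcal{K}(A,D)}(p))=S_{p}$. First, $1)\Leftrightarrow 2)$: by Definition of global SA-RoD rigidity, $(\mathcal{G}(A,D),p)$ is globally SA-RoD rigid iff $f^{-1}_{\mathcal{G}(A,D)}(f_{\mathcal{G}(A,D)}(p))=f^{-1}_{\mathcal{K}(A,D)}(f_{\mathcal{K}(A,D)}(p))$, and by Lemma~\ref{lem1o} the right-hand side equals $S_{p}$; Lemma~\ref{lem:Sp} then gives exactly statement $2)$. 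This step is essentially bookkeeping.

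Next, $2)\Leftrightarrow 3)$: statement $3)$ says that the map $f_{\mathcal{G}(A,D)}(q)\mapsto f_{\mathcal{G}(\mathcal{T}_{\mathcal{G}})}(q)$ (all SAs/RoDs over $\mathcal{T}_{\mathcal{G}}$, not just those dictated by the bipartition) is well-defined on the fibre through $p$, i.e. $f^{-1}_{\mathcal{G}(A,D)}(f_{\mathcal{G}(A,D)}(p))\subseteq \{q: \alpha_{ijk}(q)=\alpha_{ijk}(p)\ \forall (i,j,k)\in\mathcal{T}_{A}\text{ within }\mathcal{T}_{\mathcal{G}},\ \kappa_{ijk}(q)=\kappa_{ijk}(p)\ \forall (i,j,k)\in\mathcal{T}_{D}\text{ within }\mathcal{T}_{\mathcal{G}}\}$. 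To connect this with $2)$, I would use the algebraic combination identities $\alpha_{ijk}+\alpha_{ikl}=\alpha_{ijl}$ and $\kappa_{ijk}\kappa_{ikl}=\kappa_{ijl}$ noted before Lemma~\ref{lemm01}: all triples in $\mathcal{T}_{A}$ (resp.\ $\mathcal{T}_{D}$) sharing the apex $i$ are generated, as functions of the configuration, by the edge-based quantities $\alpha_{ij}:=\text{(signed angle of edge }ij)$ modulo the global rotation, or equivalently by the bearings/log-lengths of the edges incident to $i$. Hence knowing $f_{\mathcal{G}(A,D)}$ on its fibre is equivalent to knowing all SAs over $\mathcal{T}_{\mathcal{G},A}$ and all RoDs over $\mathcal{T}_{\mathcal{G},D}$; and once all SAs and RoDs over $\mathcal{T}_{\mathcal{G}}$ (for \emph{both} vertex classes) are pinned down, the full set of edge bearings (up to a common rotation) and edge log-lengths (up to a common scaling) is determined, which — together with a spanning connectivity argument on $\mathcal{G}$ via the incidence structure — fixes $q$ up to the four trivial parameters, i.e.\ forces $q\in S_{p}$. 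Conversely $q\in S_{p}$ trivially preserves all SAs and RoDs, so $2)$ implies $3)$.

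Finally, $3)\Leftrightarrow 4)$: statement $4)$ is the same as $3)$ but with $\mathcal{T}_{\mathcal{K}}$ in place of $\mathcal{T}_{\mathcal{G}}$. The direction $4)\Rightarrow 3)$ is immediate since $\mathcal{T}_{\mathcal{G}}\subseteq\mathcal{T}_{\mathcal{K}}$. For $3)\Rightarrow 4)$ I would argue that once every SA $\alpha_{ijk}$, $(i,j,k)\in\mathcal{T}_{\mathcal{G}}$, and every RoD $\kappa_{ijk}$, $(i,j,k)\in\mathcal{T}_{\mathcal{G}}$, is determined, one recovers the full bearing vector (up to global rotation) and the full length vector (up to global scaling) of the edge set $\mathcal{E}$, hence the configuration $q$ up to $S_{p}$; but on $S_{p}$ \emph{all} SAs and RoDs over $\mathcal{T}_{\mathcal{K}}$ are automatically constant, giving $4)$. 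The main obstacle is the middle equivalence $2)\Leftrightarrow 3)$: I must carefully show that the edge-bearing/edge-length data extractable from SAs-at-$\mathcal{V}_A$-apices and RoDs-at-$\mathcal{V}_D$-apices, propagated through $\mathcal{G}$, actually reconstructs $q$ up to similarity — this requires handling the degenerate (collinear) case, where bearings may be ill-defined, and leaning on Assumption~1 together with the fact (Lemma~\ref{lem2nn}) that $S_{p}$ remains a $4$-manifold even for degenerate $p$, so that the counting/closure argument still goes through.
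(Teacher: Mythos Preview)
Your plan is correct and follows essentially the same logic as the paper's proof: $1)\Leftrightarrow 2)$ via Lemma~\ref{lem1o} and Lemma~\ref{lem:Sp} (identical to the paper), and the remaining equivalences via the cycle $2)\Rightarrow 4)\Rightarrow 3)\Rightarrow 2)$ (the paper) versus your $2)\Leftrightarrow 3)$ and $3)\Leftrightarrow 4)$ (which unpacks to the same implications, just reordered).

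One clarification that will simplify your write-up considerably: you misread what statement~3) hands you. Statement~3) asserts that \emph{all} SAs and \emph{all} RoDs over $\mathcal{T}_{\mathcal{G}}$ --- at every apex, regardless of whether that apex lies in $\mathcal{V}_A$ or $\mathcal{V}_D$ --- are already uniquely determined. So for $3)\Rightarrow 2)$ you are \emph{not} trying to extract shape from ``SAs-at-$\mathcal{V}_A$-apices and RoDs-at-$\mathcal{V}_D$-apices'' as you write in your last paragraph; you have the full SA data and the full RoD data on $\mathcal{G}$. With that, the triple-index graph $\mathcal{G}_T(\mathcal{T}_{\mathcal{G}})$ is just the line graph of $\mathcal{G}$, hence connected, and the bearing/length propagation is immediate --- no delicate handling of degenerate configurations or appeal to Lemma~\ref{lem2nn} is needed. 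This is precisely what the paper means by ``the argument is similar to that of Lemma~\ref{lem1o}'': once both SA and RoD are known at every vertex of a triangle (or along every edge of the connected graph), the shape is fixed. The algebraic identities $\alpha_{ijk}+\alpha_{ikl}=\alpha_{ijl}$ and $\kappa_{ijk}\kappa_{ikl}=\kappa_{ijl}$ you invoke are a red herring here --- they concern redundancy among constraints at a single apex, not the conversion between SA and RoD that you seem to be reaching for. Finally, note that the paper closes the loop slightly more economically by observing $2)\Rightarrow 4)$ directly (shape determines all similarity invariants over $\mathcal{T}_{\mathcal{K}}$) rather than going $3)\Rightarrow 2)\Rightarrow 4)$ as you do; either route is fine.
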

 \begin{proof}
 \textit{Equivalence of 1) and 2):} From Lemma $\ref{lem1o}$, $f^{-1}_{\mathcal{K}(A,D)}(f_{\mathcal{K}(A,D)}(p))=S_{p}$. 
According to the definition of $S_p$ in (\ref{defeq}), the shape of a framework $(\mathcal{G}(A,D),p)$ in $\mathbb{R}^{2}$ can be uniquely determined by SA and RoD constraints up to uniform rotations, translations, and scalings if and only if $f^{-1}_{\mathcal{G}(A,D)}(f_{\mathcal{G}(A,D)}(p))=S_{p}$. Thus, this is equivalent to $f^{-1}_{\mathcal{G}(A,D)}(f_{\mathcal{G}(A,D)}(p))=f^{-1}_{\mathcal{K}(A,D)}(f_{\mathcal{K}(A,D)}(p))$, i.e., global SA-RoD rigidity of $(\mathcal{G}(A,D),p)$.\\
 \textit{Equivalence of 2) and 3):} The argument is similar to that of Lemma $\ref{lem1o}$.\\
 \textit{Equivalence of 2) and 4):} Observe that 2) implies 4), and 4) implies 3). Since 2) and 3) are equivalent, we obtain the desired equivalence of 2) and 4).
 \end{proof}

Now we deal with the shape determination problem of quadrilateral frameworks. According to different bipartitions of the vertex set, the global rigidity property of a quadrilateral framework can be categorized into six cases as illustrated in Figure $\ref{lf13}$. Since purely SA or RoD constrained quadrilateral frameworks are well understood, the remaining four cases are analyzed in Appendix $\ref{p2}$. To state the following result, denote $d_{ij}=\|p_{i}-p_{j}\|>0$, $b_{ij}=\frac{p_{j}-p_{i}}{\|p_{j}-p_{i}\|}=(\cos\theta_{ij},\sin\theta_{ij})^{\top}$, where $\theta_{ij}\in[0,2\pi)$ for distinct $i, j\in\{1,2,3,4\}$.
 \begin{proposition}\label{pro1}
 A quadrilateral framework $(\mathcal{G}(A,D),p)$ with the vertex set $\mathcal{V}=\{1,2,3,4\}$ and the edge set $\mathcal{E}=\{(1,2),(2,3),(3,4),(4,1)\}$ is globally SA-RoD rigid if and only if it satisfies one of the following conditions:\\
 1) $|\mathcal{V}_{A}|=3$, $|\mathcal{V}_{D}|=1$, and vertices in $\mathcal{V}_{A}$ are non-collinear.\\
 2) $|\mathcal{V}_{A}|=1$, $|\mathcal{V}_{D}|=3$, and either vertices $2,3,$ and $4$ are collinear or $d_{14}=d_{34}, d_{12}=d_{32}$, where without loss of generality, it is assumed that $\mathcal{V}_{A}=\{1\}, \mathcal{V}_{D}=\{2,3,4\}$.\\
 3) $|\mathcal{V}_{A}|=2$, $|\mathcal{V}_{D}|=2$, vertices in $\mathcal{V}_{A}$ are adjacent and it holds that 
 \begin{equation}\label{qua3n}
 d_{12}+2d_{34}\cos(\theta_{34}-\theta_{12})\leq 0,
 \end{equation}
 where without loss of generality, it is assumed that $\mathcal{V}_{A}=\{1,2\},\mathcal{V}_{D}=\{3,4\}$. \\
 4) $|\mathcal{V}_{A}|=2$, $|\mathcal{V}_{D}|=2$, vertices in $\mathcal{V}_{A}$ are not adjacent and it holds that  
 \begin{equation}\label{qua4}
 d_{12}^{2}d_{34}^{2}+d_{14}^{2}d_{23}^{2}-d_{14}^{2}d_{21}^{2}\sin^{2}\theta_{124}-d_{34}^{2}d_{23}^{2}\sin^{2}\theta_{324}-2d_{12}d_{23}d_{34}d_{14}\cos\theta_{324}\cos\theta_{124}=0,
 \end{equation}
 or 
 \begin{equation}\label{qua5}
 (d_{23}-d_{12})(d_{34}-d_{14})\leq 0,
 \end{equation}
 where without loss of generality, it is assumed that $\mathcal{V}_{A}=\{1,3\},\mathcal{V}_{D}=\{2,4\}$, $\theta_{324}=\theta_{34}-\theta_{32}$, and $\theta_{124}=\theta_{14}-\theta_{12}$. 
 \end{proposition}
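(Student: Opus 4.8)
The plan is to classify by the sizes of $\mathcal{V}_A$ and $\mathcal{V}_D$ and, in each case, use Theorem~\ref{shp} to reduce global SA-RoD rigidity to the question of whether all the bearings $\theta_{ij}$ (equivalently all SAs in $\mathcal{T}_{\mathcal{K}}$) and all edge-length ratios (equivalently all RoDs in $\mathcal{T}_{\mathcal{K}}$) of the quadrilateral are determined by the prescribed SA/RoD data together with the closure constraint~(\ref{eqli}). Because $|\mathcal{V}|=4$ and $\mathcal{E}$ is the $4$-cycle, each node has exactly two neighbors, so $\mathcal{T}_A$ and $\mathcal{T}_D$ consist of one triple per vertex: a vertex in $\mathcal{V}_A$ contributes the SA at that vertex, a vertex in $\mathcal{V}_D$ contributes the RoD at that vertex. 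The cases $|\mathcal{V}_A|=0$ and $|\mathcal{V}_A|=4$ are the pure RoD and pure SA cases, which are excluded by Assumption~2 and known to be flexible (Figure~\ref{lf13}(a),(b)); so the live cases are $|\mathcal{V}_A|\in\{1,2,3\}$.

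I would handle the three cases as follows. For $|\mathcal{V}_A|=3$: the three interior SAs of the triangle formed by the three $\mathcal{V}_A$-nodes are fixed, which (if and only if those three nodes are non-collinear) pins down the triangle's shape up to similarity; the remaining node in $\mathcal{V}_D$ contributes one RoD, and its two incident edges together with the fixed geometry of the triangle force its position up to the global similarity. Non-collinearity is exactly the condition ruling out the degenerate/flip ambiguity seen in Figure~\ref{lf13}(c) read in reverse, so this gives condition~1. For $|\mathcal{V}_A|=2$ I would split into the sub-cases "the two $\mathcal{V}_A$-nodes are adjacent" and "they are opposite," and in each substitute the two known SAs and two known RoDs into~(\ref{eqli}) written in the scalar form~(\ref{eqli}); eliminating variables reduces the system to a single polynomial/inequality in the remaining unknowns, and global rigidity holds precisely when that equation has a unique admissible solution — this is where~(\ref{qua3n}) (adjacent) and~(\ref{qua4})/(\ref{qua5}) (opposite) come from, distinguishing the "tangency/double-root" situation from the "single-root" situation. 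For $|\mathcal{V}_A|=1$ the single SA plus three RoDs (three ratios among the four edge lengths, hence all lengths up to scale) leave one reflective degree of freedom for the $\mathcal{V}_A$-node across the line through its neighbors; this ambiguity collapses exactly when nodes $2,3,4$ are collinear (the reflection is trivial) or when the relevant isoceles conditions $d_{14}=d_{34}$, $d_{12}=d_{32}$ hold (the reflection maps the configuration to a congruent one), giving condition~2. All of this is collected from the case analysis already deferred to Appendix~\ref{p2}, so the proof body is essentially an assembly of those sub-results.

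The main obstacle will be the $|\mathcal{V}_A|=2$ opposite-node case: there the two SA constraints and two RoD constraints interact nonlinearly through~(\ref{eqli}), and extracting the clean alternative "(\ref{qua4}) or~(\ref{qua5})" requires carefully tracking which root of a quadratic corresponds to a genuinely distinct (non-similar) configuration versus a spurious one. I expect to set up~(\ref{eqli}) in coordinates aligned with one known edge, use the two RoDs to parametrize $d_{23},d_{34}$ in terms of $d_{12}=:d$, impose the two SA relations as fixed angle differences, project~(\ref{eqli}) onto two orthogonal directions, and reduce to a single equation whose discriminant governs uniqueness — the bookkeeping of admissibility (positivity of lengths, range of angles, orientation consistency) is the delicate part, but it is routine once organized, and the remark after Theorem~\ref{thmm1} plus the duality Corollary~\ref{thm:dualg} provide useful sanity checks on the symmetry between the $\mathcal{V}_A=\{1,3\}$ and $\mathcal{V}_A=\{2,4\}$ labelings.
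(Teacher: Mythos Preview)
Your overall plan (case split on $|\mathcal{V}_A|$, reduce via Theorem~\ref{shp} to the closure constraint~(\ref{eqli}), and check when the resulting algebraic system has a unique admissible solution) is exactly what the paper does in Appendix~\ref{p2}. Two of your case descriptions, however, contain a genuine slip that would derail the argument if carried out as written.

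In the $|\mathcal{V}_A|=3$ case you say ``the three interior SAs of the triangle formed by the three $\mathcal{V}_A$-nodes are fixed.'' This is false: with $\mathcal{V}_A=\{1,2,3\}$ the measured SAs are $\alpha_{124},\alpha_{213},\alpha_{324}$, and two of them involve vertex~$4$; there is no edge $(1,3)$, so no triangle $\Delta 123$ sits inside the framework. The correct mechanism (the paper's Case~4) is that these three SA constraints make $\mathcal{T}_A$ connected over $\mathcal{G}$, so all four edge bearings $\theta_{12},\theta_{23},\theta_{34},\theta_{41}$ are determined up to a global rotation; together with the single RoD $d_{34}/d_{14}$ at vertex~$4$, the closure equation~(\ref{eqli}) becomes a $2\times 2$ linear system for the remaining ratios $d_{12}/d_{14},\,d_{23}/d_{14}$, with determinant $\sin(\theta_{23}-\theta_{12})$. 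Non-collinearity of $1,2,3$ is exactly the nonvanishing of this determinant, not the nondegeneracy of a triangle whose interior angles you know.

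In the $|\mathcal{V}_A|=1$ case you have the reflection attached to the wrong node: with $\mathcal{V}_A=\{1\}$, the RoDs at $2,3,4$ fix all four edge lengths up to scale and $\alpha_{124}$ then pins down $\Delta 124$; it is node~$3$ (not node~$1$) that is determined only up to reflection across the line through $2$ and $4$. This is why the exceptional conditions are ``$2,3,4$ collinear'' or ``$d_{14}=d_{34},\,d_{12}=d_{32}$'' (the latter forces the reflected image of $3$ to land on $1$, which Assumption~1 forbids).

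For the opposite $|\mathcal{V}_A|=2$ case your plan to attack~(\ref{eqli}) directly will work but is messier than necessary. The paper instead introduces the diagonal $d_{24}$ and computes it two ways by the law of cosines in $\Delta 124$ and $\Delta 234$; equating and substituting the two RoD constraints gives a single quadratic in $x=d_{14}/d_{12}$, from which~(\ref{qua4}) is the discriminant-zero condition and~(\ref{qua5}) is the sign condition forcing one nonpositive root. This avoids the coordinate bookkeeping you anticipate as the main obstacle.
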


  This proposition provides useful criteria to check whether an SA-RoD constrained quadrilateral framework is globally rigid. It can serve as an essential unit to construct globally rigid frameworks and explore generic properties of underlying graphs. We can verify that the frameworks in Figures $\ref{lf13}(d)$ and $\ref{lf13}(e)$, which are not globally SA-RoD rigid, do not satisfy these criteria.

\subsection{Bipartition for SA-RoD rigidity}

As shown in the motivating example, bipartitions of the vertex set have significant effects on the rigidity property of corresponding frameworks. Given a general graph 
$\mathcal{G}=(\mathcal{V},\mathcal{E})$ and a configuration $p\in\mathbb{R}^{2n}$, there are some natural questions related to possible bipartitions:

1) Is there any bipartition such that the associated framework $(\mathcal{G}(A,D),p)$ is infinitesimally/globally SA-RoD rigid? If yes, how to construct one?  

2) Are there any intrinsic relationships among (global) SA-RoD rigidities under different bipartitions of the vertex set?

3) How does node bipartition affect the generic property of SA-RoD rigidity?

\subsubsection{Necessary condition for existence of bipartitions for SA-RoD rigidity}
 In this subsection, we will present a necessary condition for the existence of bipartitions such that the framework $(\mathcal{G}(A,D), p)$ is (infinitesimally) SA-RoD rigid. This provides a partial answer to Question 1). More specifically, we can establish a lower bound on the number of edges required to guarantee SA-RoD rigidity as follows. 
\begin{lemma}\label{ll13}
Given a framework $(\mathcal{G}(A,D),p)$ in $\mathbb{R}^{2}$, if $(\mathcal{G}(A,D),p)$ is infinitesimally SA-RoD rigid, then $m\geq \frac{3n-4}{2}$.
\end{lemma}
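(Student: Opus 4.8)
The plan is to combine the rank characterization of infinitesimal SA-RoD rigidity (Theorem~\ref{infi}) with the a priori rank bound from Lemma~\ref{lemm01}. If $(\mathcal{G}(A,D),p)$ is infinitesimally SA-RoD rigid, then by Theorem~\ref{infi} we have $\rank(R_{\mathcal{G}(A,D)}(p))=2n-4$. On the other hand, Lemma~\ref{lemm01} gives the upper bound $\rank(R_{\mathcal{G}(A,D)}(p))\leq 2m-n$. Chaining these two facts yields $2n-4\leq 2m-n$, i.e.\ $3n-4\leq 2m$, which is exactly the claimed bound $m\geq\frac{3n-4}{2}$.

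First I would invoke Theorem~\ref{infi}(2) to record that infinitesimal SA-RoD rigidity forces the rigidity matrix to attain rank $2n-4$; this is the maximal possible rank, since the trivial motion space (uniform translations, rotations, scalings) is 4-dimensional by Lemma~3.1 of~\cite{lyjbb8} and always lies in the null space. Next I would apply Lemma~\ref{lemm01}, whose proof in Appendix~\ref{pol1} is based on the factorization $R_{\mathcal{G}(A,D)}(p)=[\bar{R}_A^\top,\bar{R}_D^\top]^\top \bar H$ from the Proposition together with the redundancy relations $\alpha_{ijk}+\alpha_{ikl}=\alpha_{ijl}$ and $\kappa_{ijk}\kappa_{ikl}=\kappa_{ijl}$ at each vertex; the upshot is that effectively each vertex contributes at most one fewer independent row than a naive count suggests, giving $\rank\leq 2m-n$. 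Finally I would combine the equality and the inequality and rearrange: $2n-4=\rank(R_{\mathcal{G}(A,D)}(p))\leq 2m-n$ gives $2m\geq 3n-4$, hence $m\geq\frac{3n-4}{2}$.

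Since both ingredients are already available in the excerpt (Theorem~\ref{infi} and Lemma~\ref{lemm01}), there is essentially no obstacle left — the argument is a one-line consequence once those are in hand. The only point worth a sentence of care is verifying that Assumption~1 (no two nodes collocated) is in force so that the rigidity matrix entries $b_{ij}/\|e_{ij}\|$ and the RoDs $\kappa_{ijk}$ are well defined; this is guaranteed by the standing assumptions of the paper. One could also remark that the bound is tight: the later constructions (Theorem~\ref{order1} and the minimal frameworks with $\frac{3n-4}{2}$ or $\frac{3n-3}{2}$ edges) show it is achieved, which motivates the terminology ``minimal SA-RoD rigid.''
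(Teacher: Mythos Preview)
Your proof is correct and essentially identical to the paper's own argument: both combine the rank equality $\rank(R_{\mathcal{G}(A,D)}(p))=2n-4$ from Theorem~\ref{infi} with the bound $\rank(R_{\mathcal{G}(A,D)}(p))\leq 2m-n$ from Lemma~\ref{lemm01}. The paper merely phrases it as a contrapositive (assume $m<\frac{3n-4}{2}$ and derive $\rank<2n-4$), but the content is the same.
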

\begin{proof}
Based on Lemma $\ref{lemm01}$ and Theorem $\ref{infi}$, we have $\rank (R_{\mathcal{G}(A,D)}(p))=2n-4\leq 2m-n.$ It follows that $m\geq \frac{3n-4}{2}$.
\end{proof}

According to Lemma $\ref{lem:eqge}$, Lemma \ref{ll13} is also valid for (globally) SA-RoD rigid frameworks with generic configurations. 
It should be noted that the converse of Lemma $\ref{ll13}$ is not true. For example, let $\mathcal{G}=(\mathcal{V},\mathcal{E})$ with $\mathcal{V}=\{1,2,3,4\}$ and  $\mathcal{E}=\{(1,2),(1,3),(2,3),(3,4)\}$, and $p\in\mathbb{R}^{8}$ be a generic configuration. Then $m=\frac{3n-4}{2}=4$ but $(\mathcal{G}(A,D),p)$ is not infinitesimally SA-RoD rigid under any bipartition as there always exist non-trivial infinitesimal SA-RoD motions.

\begin{remark}
    The lower bound in Lemma $\ref{ll13}$ is tight. In Subsection \ref{cons_gr}, we will construct globally SA-RoD rigid frameworks satisfying $m= \lceil{\frac{3n-4}{2}\rceil}$. Combining with Lemma $\ref{ll13}$, these frameworks with generic configurations are shown to be {\it minimally globally SA-RoD rigid}. That is, they are globally SA-RoD rigid and the deletion of an arbitrary edge in the underlying graphs destroys global SA-RoD rigidity. Lemma $\ref{lem:eqge}$ implies that globally SA-RoD rigid frameworks with generic configurations are also infinitesimally SA-RoD rigid. 
\end{remark}

If the underlying graph $\mathcal{G}$ is a cycle with $n\geq 5$, it holds that $m<\frac{3n-4}{2}$. Therefore, we obtain the following proposition.
\begin{proposition}\label{prop:5cyc}
    For a polygon  \footnote{In this paper, polygons may be non-convex and their edges may intersect at non-vertex points.}with more than 5 vertices in $\mathbb{R}^{2}$, the corresponding framework is not infinitesimally SA-RoD rigid under any bipartition. 
\end{proposition}

\subsubsection{Bipartition-induced triplet index graph for global SA-RoD rigidity}

First, we introduce the following definition of the {\it triplet index graph}.
\begin{definition}[Triplet index graph]\label{def:tri}
For a graph $\mathcal{G}$ and a
triplet index set $\hat{\mathcal{T}}_{\mathcal{G}}\subseteq\mathcal{T}_{\mathcal{G}}$, the undirected triplet index graph is denoted by $\mathcal{G}_{T}(\hat{\mathcal{T}}_{\mathcal{G}})= (\mathcal{V}_{T}(\mathcal{G}), \mathcal{E}_{T}(\hat{\mathcal{T}}_{\mathcal{G}}))$, where the vertex set is $\mathcal{V}_{T}(\mathcal{G})=\{a_{ij}=H(i, j, |\mathcal{V}|):(i, j)\in \mathcal{E}\}$, $H(i,j,|\mathcal{V}|)=(\min\{i,j\}-1)\times |\mathcal{V}|+\max\{i,j\}$, and the edge set is $\mathcal{E}_{T}(\hat{\mathcal{T}}_{\mathcal{G}})=\{(a_{ij},a_{ik}):a_{ij},a_{ik}\in\mathcal{V}_{T}(\mathcal{G}), (i,j,k)\in\hat{\mathcal{T}}_{\mathcal{G}}\  or\ (i,k,j)\in\hat{\mathcal{T}}_{\mathcal{G}}\}$.
\end{definition}

The expression of $H$ in the above definition indicates that $a_{ij}=a_{ji},\ (i,j)\in\mathcal{E}$. 
Given graph $\mathcal{G}(A,D)$ with a specific bipartition, the SA (RoD) index set $\mathcal{T}_A\ (\mathcal{T}_D)$ can be derived naturally. Accordingly, we can obtain the bipartition-induced SA (RoD) index graph $\mathcal{G}_{T}(\mathcal{T}_{A})$ $(\mathcal{G}_{T}(\mathcal{T}_{D}))$ based on Definition $\ref{def:tri}$. Hence we introduce the definition of the {\it SA (RoD) connectivity}.
\begin{definition}[SA (RoD) connectivity]\label{def:con}
Given graph $\mathcal{G}$, a
triplet SA (RoD) index set $\mathcal{T}_{A}\subseteq\mathcal{T}_{\mathcal{G}}$  $(\mathcal{T}_{D}\subseteq\mathcal{T}_{\mathcal{G}})$ is said to be connected over $\mathcal{G}$ if the undirected triplet index graph $\mathcal{G}_{T}(\mathcal{T}_{A})$ $(\mathcal{G}_{T}(\mathcal{T}_{D}))$ is connected. Moreover, $\mathcal{T}_{A}$ $(\mathcal{T}_{D})$ is said to have $c$ connected components over $\mathcal{G}$ if $\mathcal{G}_{T}(\mathcal{T}_{A})$ $(\mathcal{G}_{T}(\mathcal{T}_{D}))$ has $c$ connected components.
\end{definition}

For a framework $(\mathcal{G}(A,D),p)$, if $\mathcal{T}_{A}$ ($\mathcal{T}_{D}$) is connected over $\mathcal{G}$, then $(\mathcal{G}(A,D),p)$ is said to be {\it  SA (RoD) connected} over $\mathcal{G}$. Figure $\ref{lf11}$ presents a graph with a given bipartition, the SA index graph, and the RoD index graph. According to two definitions above, $\mathcal{T}_{A}$ has five connected components, and $\mathcal{T}_{D}$ is connected over $\mathcal{G}$. 


\begin{figure}[!h]
\centerline{\includegraphics[width=0.8\columnwidth]{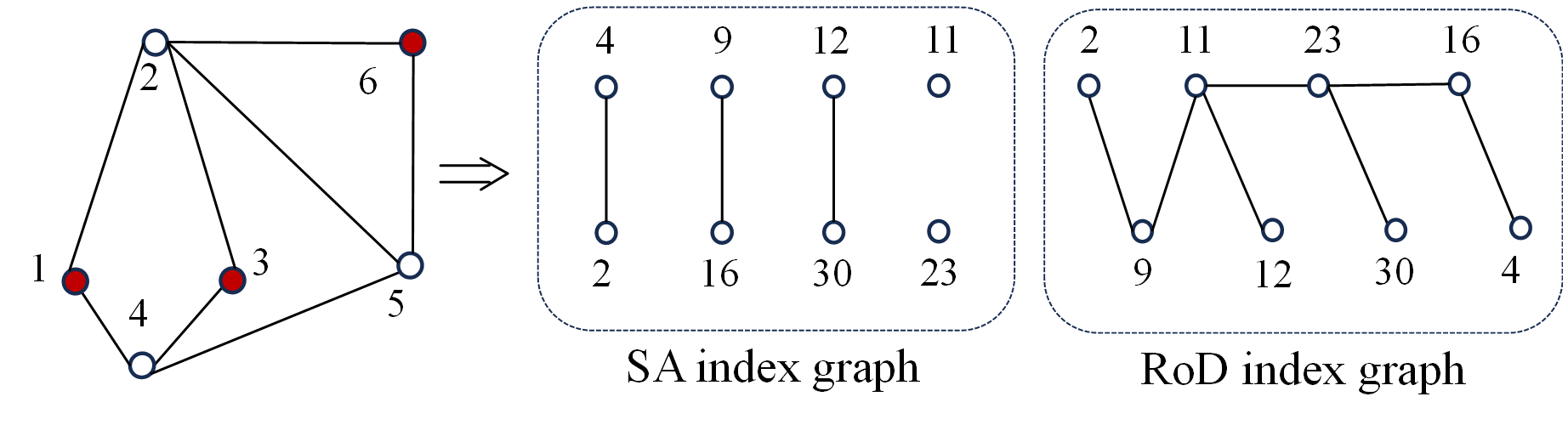}}
\caption{An example of a graph associated with a given bipartition, the SA index graph, and the RoD index graph. There exist five connected components in the SA index graph, while the RoD index graph is connected. The vertices in SA/RoD index graphs satisfy $2=a_{12}, 4=a_{14}, 9=a_{23}, 11=a_{25}, 12=a_{26}, 16=a_{34}, 23=a_{45}$, and $30=a_{56}$. The dots in red represent vertices in $\mathcal{V}_{A}$.}
\label{lf11}
\end{figure}

 Next, we discuss how to construct a suitable bipartition for the SA (RoD) connectivity. Without loss of generality, we assume $\mathcal{G}$ is connected and consider the SA connectivity as an example. A direct observation yields that if $|\mathcal{V}_A|\geq n-1$, the SA index set $\mathcal{T}_A$ is always connected over $\mathcal{G}$. Generally,  $|\mathcal{V}_A|$ may be further reduced under SA connectivity. Here we provide Algorithm \ref{alg:buildpar} to implement this procedure. 
The output $\mathcal{V}_A$ in Algorithm \ref{alg:buildpar} is minimal, in the sense that for any 
 $\mathcal{V}=\mathcal{V}^\prime_A\cup\mathcal{V}^\prime_D$ satisfying that $\mathcal{V}^\prime_A\subseteq\mathcal{V}_A$, and the SA index set $\mathcal{T}^\prime_A$ is connected over $\mathcal{G}$, it holds that 
 $\mathcal{V}^\prime_A=\mathcal{V}_A$. An example to illustrate the construction of the bipartition for the SA connectivity by Algorithm \ref{alg:buildpar} is shown in Figure \ref{cb11}.

 \begin{algorithm}
\caption{Bipartition design for SA connectivity over graph $\mathcal{G}=(\mathcal{V},\mathcal{E})$ }
\label{alg:buildpar}
\begin{algorithmic}[1]
\STATE{Initialize $\mathcal{V}_A=\mathcal{V}$, $\mathcal{V}_D=\emptyset$, and $\mathcal{T}_A=\mathcal{T}_{\mathcal{G}}$}
\FOR{$j\in\mathcal{V}\setminus\bigcup_{i\in\mathcal{V}_D}\mathcal{N}_i$}
\STATE{Update $\mathcal{V}_A=\mathcal{V}_A\setminus\{j\}$, and $\mathcal{T}_A=\mathcal{T}_A\setminus\{(j,k,l)\in\mathcal{V}^3:k,l\in\mathcal{N}_j, k<l\}$}
\STATE{Determine the SA index graph $\mathcal{G}_T(\mathcal{T}_A)$ according to Definition \ref{def:tri}} 
\STATE{Check the connectivity of $\mathcal{G}_T(\mathcal{T}_A)$ by breadth-first search \cite[Section 22.2]{Alg}}
\IF{$\mathcal{G}_T(\mathcal{T}_A)$ is disconnected}
\STATE{$\mathcal{V}_A=\mathcal{V}_A\bigcup\{j\}$, $\mathcal{T}_A=\mathcal{T}_A\bigcup\{(j,k,l)\in\mathcal{V}^3:k,l\in\mathcal{N}_j, k<l\}$}
\ENDIF
\STATE{Update $\mathcal{V}_D = \mathcal{V}\setminus\mathcal{V}_A$}
\ENDFOR
\RETURN $\mathcal{V}_A$ and $\mathcal{V}_D$ 
\end{algorithmic}
\end{algorithm}
 \begin{remark}
For graph $\mathcal{G}$ with a specific bipartition, it holds that $|\mathcal{T}_A|\leq |\mathcal{T}_\mathcal{G}|\leq \frac{n(n-1)(n-2)}{2}$. Thus there exist at most $m$ vertices and $\frac{n(n-1)(n-2)}{2}$ edges in $\mathcal{G}_T(\mathcal{T}_A)$. Note that $m\leq\frac{n(n-1)}{2}$. Hence the worst-case time complexity of BFS is 
 $\mathcal{O}(n^4)$ in Line 5. 
By examining elementary operations in Lines $3-4$ and $6-9$, we can verify the worst-case time complexity of 
 Algorithm \ref{alg:buildpar} is $\mathcal{O}(n^5)$.
  \end{remark}

\begin{figure}[!h]
\centerline{\includegraphics[width=0.8\columnwidth]{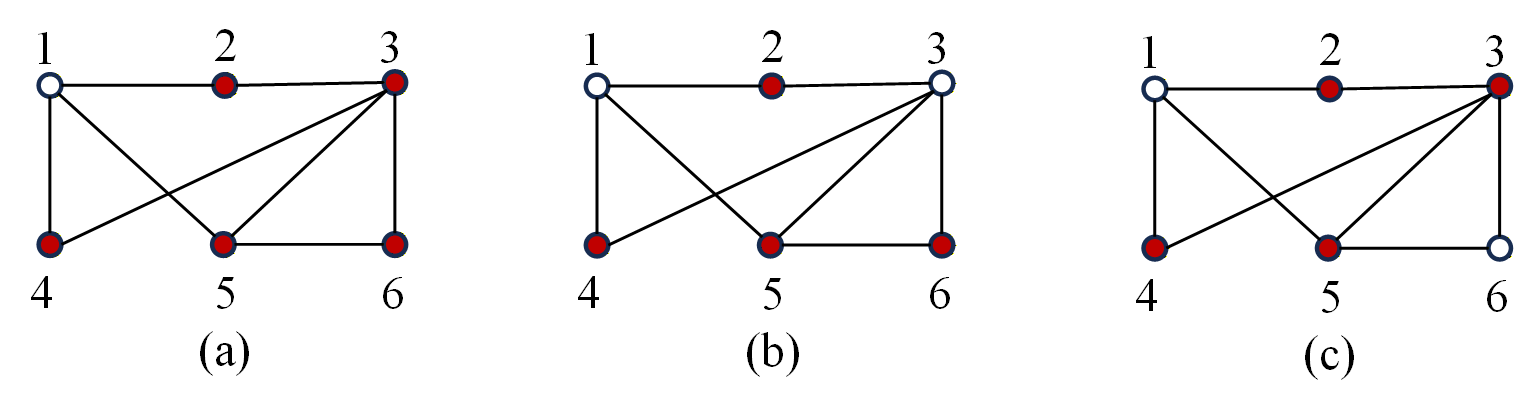}}
\caption{An example of finding bipartition $\mathcal{V}=\mathcal{V}_A\cup\mathcal{V}_D$ by Algorithm \ref{alg:buildpar}. (a) In Step 1, $\mathcal{V}_{A,1}=\mathcal{V}\setminus\{1\}=\{2,3,4,5,6\}$, and the SA index set is connected over $\mathcal{G}$. (b) In Step 2, $\mathcal{V}_{A,2}=\mathcal{V}_{A,1}\setminus\{3\}=\{2,4,5,6\}$, and the SA index set is disconnected over $\mathcal{G}$. After performing the operation in Lines 
$6-8$, we have $\mathcal{V}_{A,2}=\{2,3,4,5,6\}$.  (c) In Step 3, $\mathcal{V}_{A,3}=\mathcal{V}_{A,2}\setminus\{6\}=\{2,3,4,5\}$, and the SA index set is connected over $\mathcal{G}$. The dots in red represent vertices in $\mathcal{V}_{A}$.}
\label{cb11}
\end{figure}


If the SA index set $\mathcal{T}_{A}$ is connected over $\mathcal{G}$, we can establish the following theorem.
\begin{theorem}\label{thmm}
For an infinitesimally SA rigid framework $(\mathcal{G}, p)$ in $\mathbb{R}^{2}$, and a given bipartition $\mathcal{V}=\mathcal{V}_{A}\cup\mathcal{V}_{D}$, if the SA index set $\mathcal{T}_{A}$ is connected over $\mathcal{G}$, then $(\mathcal{G}(A,D),p)$ is globally SA-RoD rigid.
\end{theorem}
\begin{proof}
Suppose $q\in f^{-1}_{\mathcal{G}(A,D)}(f_{\mathcal{G}(A,D)}(p))$, then $\alpha_{ijk}(q)=\alpha_{ijk}(p)$ for $(i,j,k)\in\mathcal{T}_{A}$. As in the proof of \cite[Lemma 6]{lyjbb10},  we have  $\alpha_{ijk}(q)=\alpha_{ijk}(p)$  for all $(i,j,k)\in\mathcal{T}_{\mathcal{G}}$ since $\mathcal{T}_{A}$ is connected over $\mathcal{G}$. 
Based on Lemma $\ref{eqSA1}$, infinitesimal SA rigidity of $(\mathcal{G}, p)$ implies that SA constraints induced by $\mathcal{T}_{\mathcal{G}}$ are sufficient to guarantee the shape uniqueness. Thus, the shape of $(\mathcal{G}(A,D),p)$ can be uniquely determined by SA constraints in $\mathcal{T}_{A}$.   From Theorem $\ref{shp}$, $(\mathcal{G}(A,D),p)$ is globally SA-RoD rigid. 
\end{proof}

Based on Lemma $\ref{eqSA1}$, we know that Theorem $\ref{thmm}$ is also valid for non-degenerate and globally SA rigid frameworks. 
Similarly, if $(\mathcal{G}, p)$ is globally RoD rigid or globally distance rigid, we can show that $(\mathcal{G}(A,D),p)$ can inherit the rigidity property when the RoD index set $\mathcal{T}_{D}$ is connected over $\mathcal{G}$.

\begin{theorem}\label{thmm1}
For a globally RoD rigid framework $(\mathcal{G}, p)$ in $\mathbb{R}^{2}$, and a given bipartition $\mathcal{V}=\mathcal{V}_{A}\cup\mathcal{V}_{D}$, if the RoD index set $\mathcal{T}_{D}$ is connected over $\mathcal{G}$, then $(\mathcal{G}(A,D),p)$ is globally SA-RoD rigid as well.
\end{theorem}


Here we summarize the results in the above two subsections to answer Question 1). First, there exists a necessary condition for the existence of bipartitions to ensure rigidity. Next, if framework $(\mathcal{G},p)$ is infinitesimally SA rigid (globally RoD rigid), there exists non-trivial bipartition such that $(\mathcal{G}(A,D),p)$ is globally SA-RoD rigid. In this setting, we can construct any bipartition, which guarantees the connectivity of the SA (RoD) constraints, to ensure global SA-RoD rigidity of $(\mathcal{G}(A,D),p)$. Furthermore, a polynomial-time algorithm is provided to construct a suitable bipartition for the SA (RoD) connectivity.
\subsubsection{Bipartition-induced duality for infinitesimal SA-RoD rigidity}
To answer Question 2), we present a novel duality of infinitesimal SA-RoD rigidity with respect to two bipartitions in this subsection.
\begin{theorem}[Dual rank property]\label{thm0}
 Given two frameworks $(\mathcal{G}(A,D),p)=(\mathcal{V}_{A}\cup\mathcal{V}_{D},\mathcal{E},p)$\ and $(\mathcal{G}^{\prime}(A,D),p)=(\mathcal{V}^{\prime}_{A}\cup\mathcal{V}^{\prime}_{D},\mathcal{E},p)$. If $\mathcal{V}^{\prime}_{D}=\mathcal{V}_{A}$ and $\mathcal{V}^{\prime}_{A}=\mathcal{V}_{D}$, then $\rank(R_{\mathcal{G}(A,D)}(p))=\rank(R_{\mathcal{G}^{\prime}(A,D)}(p)).$
\end{theorem}

The proof can be found in Appendix $\ref{pdual}$. According to Theorem \ref{infi} , we can derive the following corollary.
\begin{corollary}[Duality of infinitesimal SA-RoD rigidity]\label{thm10}
 Under the same assumptions in Theorem \ref{thm0}, $(\mathcal{G}(A,D),p)$ is infinitesimally SA-RoD rigid if and only if $(\mathcal{G}^{\prime}(A,D),p)$ is infinitesimally SA-RoD rigid.
\end{corollary}

\begin{remark}
In fact, our proof still holds for $\mathcal{V}_{A}=\emptyset$ or $\mathcal{V}_{D}=\emptyset$. As a byproduct, we can show that the SA rigidity matrix and RoD rigidity matrix of a given framework have the same rank. Therefore, in $\mathbb{R}^{2}$, infinitesimal SA rigidity is equivalent to infinitesimal RoD rigidity. This can also be obtained by the results in \cite{lyjbb10,lyjbb8.1}. However, as illustrated in Figure $\ref{lf9}(a)$, infinitesimal SA-RoD rigidity of a framework is not equivalent to infinitesimal SA rigidity or infinitesimal RoD rigidity. 
\end{remark}

It is worth noting that this duality is lacking for SA-RoD rigidity and global SA-RoD rigidity. A typical example is provided by the degenerate quadrilateral in Figure $\ref{lf9}(c)$. If $\mathcal{V}_{A}=\{1\}$ and $\mathcal{V}_{D}=\{2,3,4\}$, then $(\mathcal{G}(A,D),p)$ is globally SA-RoD rigid. If $\mathcal{V}_{A}$ and $\mathcal{V}_{D}$ are swapped, then the induced framework $(\mathcal{G}(A,D),p)$ is not SA-RoD rigid. The reason is that vertex 3 can move freely on the segment connecting vertex $2$ and vertex $4$. However, we can show that for a generic configuration $p$ (see the definition in Appendix $\ref{deg:grt}$), this duality holds for SA-RoD rigidity as a consequence of Lemma $\ref{lem:eqge}$. A detailed discussion of generic properties is given in Subsection $\ref{def:gep}$.
\begin{corollary}[Duality of SA-RoD rigidity]\label{thm:dualg}
    For frameworks $(\mathcal{G}(A,D),p)=(\mathcal{V}_{A}\cup\mathcal{V}_{D},\mathcal{E},p)$ and $(\mathcal{G}^{\prime}(A,D),p)=(\mathcal{V}^{\prime}_{A}\cup\mathcal{V}^{\prime}_{D},\ \mathcal{E},p)$. If the configuration $p$ is generic in $\mathbb{R}^{2n}$, $\mathcal{V}^{\prime}_{D}=\mathcal{V}_{A}$ and $\mathcal{V}^{\prime}_{A}=\mathcal{V}_{D}$, then $(\mathcal{G}(A,D),p)$ is SA-RoD rigid if and only if $(\mathcal{G}^{\prime}(A,D),p)$ is SA-RoD rigid.
\end{corollary}

\subsubsection{Bipartition-dependent generic property}\label{def:gep}
This subsection focuses on the generic property of (infinitesimal/global) SA-RoD rigidity. 
To answer Question 3), we first give a definition of {\it generic SA-RoD rigidity} for a graph with a specific bipartition.

\begin{definition}
A graph $\mathcal{G}(A,D)$ with the specific bipartition $\mathcal{V}_A\cup\mathcal{V}_D$ is called generically (infinitesimally/globally) SA-RoD rigid in $\mathbb{R}^{2}$ if for any generic configuration $p\in\mathbb{R}^{2n}$, $(\mathcal{G}(A,D),p)$ is (infinitesimally/globally) SA-RoD rigid.
\end{definition}

The following theorem shows that (infinitesimal) SA-RoD rigidity is a generic property of the underlying graph with a specific bipartition. The proof is provided in Appendix $\ref{pg1}$. 
\begin{theorem}\label{gen1}
Given a generic configuration $p\in\mathbb{R}^{2n}$, if $(\mathcal{G}(A,D),p)$ is (infinitesimally) SA-RoD rigid, then graph $\mathcal{G}(A,D)=(\mathcal{V}_{A}\cup\mathcal{V}_{D},\mathcal{E})$ is generically (infinitesimally) SA-RoD rigid.
\end{theorem}

However, global SA-RoD rigidity is not a generic property of the underlying graph with the specific bipartition. That is, Theorem \ref{gen1} is not valid for global SA-RoD rigidity. Next, we give a counterexample to show this. 
 
 \textbf{A counterexample:} For graph $\mathcal{G}(A,D)$ with the specific bipartition $\mathcal{V}_{A}=\{1,2\}$, $\mathcal{V}_{D}=\{3,4\}$, and $\mathcal{E}=\{(1,2),(2,3),(3,4),(4,1)\}$, let $(\mathcal{G}(A,D),p)$ be a quadrilateral framework with configuration $p=(0,0,4,0,3,1,2,1)^{\top}\in\mathbb{R}^{8}$, which is not globally SA-RoD rigid as shown in Figure $\ref{lf13}(d)$. According to Proposition \ref{pro1}, the configuration set $\mathcal{C}_{1}$ of  frameworks that are not globally SA-RoD rigid is characterized by 
$d_{12}+2d_{34}\cos(\theta_{34}-\theta_{12})>0$.   
Since any configuration in a small neighborhood of $p$ must satisfy the above inequality, it holds that $\mathcal{C}_{1}$ is of positive Lebesgue measure in $\mathbb{R}^{8}$. Conversely, let $q=(0,0,4,0,4,1,0,1)^{\top}\in\mathbb{R}^{8}$, $(\mathcal{G}(A,D),q)$ with the same underlying graph is a globally SA-RoD rigid framework. Furthermore, configuration $q$ satisfies $d_{12}+2d_{34}\cos(\theta_{34}-\theta_{12})<0$. Similarly, the configuration set $\mathcal{C}_{2}$ of globally SA-RoD rigid frameworks is also of positive Lebesgue measure in $\mathbb{R}^{8}$. However, if global SA-RoD rigidity were a generic property of $\mathcal{G}(A,D)$, either the configuration set $\mathcal{C}_{1}$ or $\mathcal{C}_{2}$ must have zero measure. Therefore, under this setting, global SA-RoD rigidity is not a generic property for the underlying graph $\mathcal{G}(A,D)$ with the given bipartition. 

The above results yield the following answer to Question 3). (Infinitesimal) SA-RoD rigidity is a generic property of the underlying graph with a specific bipartition, while global SA-RoD rigidity is not a generic property. This reveals an important aspect that distinguishes the SA-RoD rigidity theory from previous rigidity theories. As a consequence, besides underlying graphs, the effects of configurations and bipartitions can not be neglected when constructing global SA-RoD rigid frameworks. 

\subsection{Construction of globally SA-RoD rigid frameworks}
\label{cons_gr}
\par This subsection presents a scalable approach to construction of globally SA-RoD rigid frameworks. Since global SA-RoD rigidity is highly relevant to the bipartition, our construction of globally SA-RoD rigid frameworks involves the design of bipartition, which is significantly different from Henneberg construction for distance (bearing) rigidity theory. 


First, we propose four types of {\it 1-vertex addition} methods to maintain global SA-RoD rigidity as follows.

\begin{definition}\label{1-ver}(1-Vertex Addition) Given a framework $(\mathcal{G}(A,D),p)$ with $|\mathcal{V}|=n$, add a vertex $n+1$ and two edges connecting vertex $n+1$ with two existing vertices $i,j$.  Let $p_{n+1}$ be the coordinate of vertex $n+1$ and denote $\mathcal{V}^{\prime}=\mathcal{V}\cup\{n+1\}$. Consider a bipartition $\mathcal{V}^{\prime}=\mathcal{V}^{\prime}_{A}\cup\mathcal{V}^{\prime}_{D}$ such that $\mathcal{V}_{A}\subseteq\mathcal{V}^{\prime}_{A}$ and $\mathcal{V}_{D}\subseteq\mathcal{V}^{\prime}_{D}$, the operation is called\\
1) a Type $A_{1}$ 1-vertex addition if $n+1\in\mathcal{V}^{\prime}_{A}$, and $i\in\mathcal{V}_{D}$ or $j\in\mathcal{V}_{D}$.\\
2) a Type $D_{1}$ 1-vertex addition if $n+1\in\mathcal{V}^{\prime}_{D}$, and $i\in\mathcal{V}_{A}$ or $j\in\mathcal{V}_{A}$.\\
3) a Type $A_{2}$ 1-vertex addition if $n+1\in\mathcal{V}^{\prime}_{A}$, $i,j\in\mathcal{V}_{A}$, and $p_{i},p_{j},p_{n+1}$ are non-collinear.\\
4) a Type $D_{2}$ 1-vertex addition if $n+1\in\mathcal{V}^{\prime}_{D}$, $i,j\in\mathcal{V}_{D}$, and for a third existing vertex $k\in\mathcal{V}_{D}$ such that $p_{i},\ p_{j}$, and $p_{k}$ are non-collinear, the third edge $(n+1,k)$ is added.
\end{definition}

These four types defined above are illustrated in Figure $\ref{lf117v}$. 
Then we show that all types of 1-vertex addition can preserve the global SA-RoD rigidity property. The proof is provided in Appendix $\ref{padd}$.
\begin{figure}[!h]
\centerline{\includegraphics[width=\columnwidth]{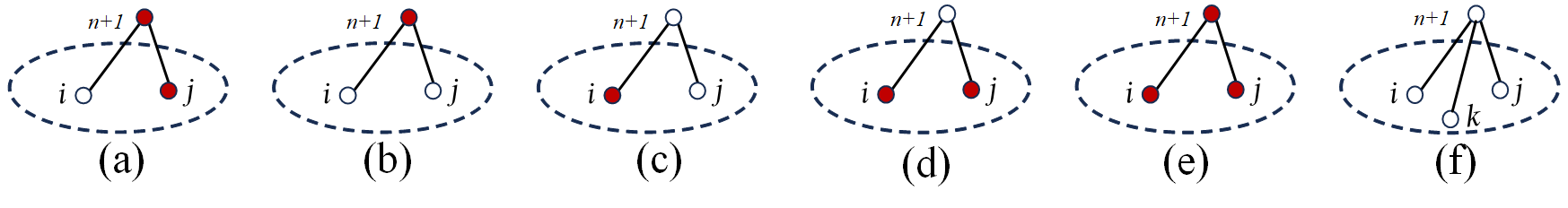}}
\caption{1-vertex addition. (a)(b) Type $A_{1}$ 1-vertex addition. (c)(d)  Type $D_{1}$ 1-vertex addition. (e)Type $A_{2}$ 1-vertex addition. (f) Type $D_{2}$ 1-vertex addition. The dots in red represent vertices in $\mathcal{V}_{A}$.}
\label{lf117v}
\end{figure}
\begin{lemma}\label{1ver}
Given a globally SA-RoD rigid framework $(\mathcal{G}(A,D),p)$ in $\mathbb{R}^{2}$, after performing a 1-vertex addition (including Type $A_{1}$, Type $A_{2}$, Type $D_{1}$, and Type $D_{2}$), the induced framework $(\mathcal{G}^{\prime}(A,D),p^{\prime})$ is globally SA-RoD rigid as well.
\end{lemma}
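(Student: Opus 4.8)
\textbf{Proof proposal for Lemma \ref{1ver}.}

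The plan is to handle the four vertex-addition types separately, but in each case the strategy is the same: take an arbitrary $q'=[q^{\top},q_{n+1}^{\top}]^{\top}\in f^{-1}_{\mathcal{G}'(A,D)}(f_{\mathcal{G}'(A,D)}(p'))$ and show $q'\in S_{p'}$, invoking Theorem \ref{shp} (equivalence of global SA-RoD rigidity and membership in $S_{p}$) at both ends. First I would observe that the added vertex $n+1$ contributes new triples only of the form $(n+1,i,j)$ (a single SA or RoD constraint, since $n+1$ has exactly the two neighbors $i,j$ in types $A_1,D_1$), or $(n+1,i,j),(n+1,i,k),(n+1,j,k)$ restricted to the new vertex in type $D_2$, or $(i,n+1,\star)$-type triples at the old endpoints in type $A_2$; crucially, the constraints among the old $n$ vertices are exactly $\mathcal{T}_{\mathcal{G}}$, so $q$ (the restriction of $q'$ to the first $n$ vertices) satisfies $f_{\mathcal{G}(A,D)}(q)=f_{\mathcal{G}(A,D)}(p)$. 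By global SA-RoD rigidity of $(\mathcal{G}(A,D),p)$ and Lemma \ref{lem1o}, $q\in S_p$, i.e.\ $q=\mathbf{1}_n\otimes\xi+c(I_n\otimes\mathcal{R}(\theta))p$ for unique $\xi\in\mathbb{R}^2$, $c>0$, $\theta\in[0,2\pi)$. Since $|\mathcal{V}|\ge 3$ and Assumption 1 holds, the old configuration has at least two distinct points, so this similarity transformation $g(x)=\xi+c\mathcal{R}(\theta)x$ is the unique one carrying $p$ to $q$ on the old vertices.

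The heart of the argument is then to show $q_{n+1}=g(p_{n+1})$. Here is where the case analysis bites. For Type $A_1$: the SA $\alpha_{(n+1)ij}$ is preserved, which fixes the \emph{angle} at $q_{n+1}$ subtended by $q_i,q_j$; combined with the fact that $i\in\mathcal{V}_D$ (so RoD triples at $i$ involving $n+1$ are available — this is exactly the role of the hypothesis $i\in\mathcal{V}_D$ or $j\in\mathcal{V}_D$), one obtains a RoD constraint such as $\kappa_{i(n+1)j}$ or similar that pins the ratio of the two new edge lengths. Together, a preserved signed angle at $n+1$ plus a preserved distance ratio from a neighbor in $\mathcal{V}_D$ determine $q_{n+1}$ uniquely up to the residual similarity freedom, which is already consumed by $q_i,q_j$; so $q_{n+1}=g(p_{n+1})$. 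For Type $A_2$, $n+1$ is in $\mathcal{V}_A$ and $i,j\in\mathcal{V}_A$, so the new constraints are the SAs $\alpha_{i(n+1)j'}$-flavored triples (SA at $i$ and at $j$ involving the new edge), and non-collinearity of $p_i,p_j,p_{n+1}$ guarantees these two angle constraints at the two base points intersect transversally in a single point (no flip ambiguity because the \emph{signed} angles are used) — again $q_{n+1}=g(p_{n+1})$. Types $D_1$ and $D_2$ are the formal duals: $D_1$ mirrors $A_1$ with the roles of SA/RoD interchanged (here the neighbor in $\mathcal{V}_A$ supplies the needed SA constraint), and $D_2$ uses three RoD constraints $\kappa_{(n+1)ij},\kappa_{(n+1)ik},\kappa_{(n+1)jk}$ with $p_i,p_j,p_k$ non-collinear: these determine $q_{n+1}$ as the unique point whose distances to $q_i,q_j,q_k$ are in the prescribed ratios, which by non-collinearity is a single point (intersection of two Apollonius circles meeting transversally), so $q_{n+1}=g(p_{n+1})$.

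Having established $q_{n+1}=g(p_{n+1})$ in all four cases, we get $q'=\mathbf{1}_{n+1}\otimes\xi+c(I_{n+1}\otimes\mathcal{R}(\theta))p'$, hence $q'\in S_{p'}$, and since $q'$ was arbitrary, $f^{-1}_{\mathcal{G}'(A,D)}(f_{\mathcal{G}'(A,D)}(p'))\subseteq S_{p'}=f^{-1}_{\mathcal{K}'(A,D)}(f_{\mathcal{K}'(A,D)}(p'))$ by Lemma \ref{lem1o}; the reverse inclusion is automatic, so $(\mathcal{G}'(A,D),p')$ is globally SA-RoD rigid. I expect the main obstacle to be the bookkeeping in Types $A_1$/$D_1$: verifying that "one preserved signed angle at the new vertex" together with "one preserved distance ratio involving the $\mathcal{V}_D$-neighbor" genuinely locates $q_{n+1}$ with no spurious second solution — in particular ruling out the reflection across the line $q_iq_j$, which is precisely why the \emph{signed} angle (not the unsigned one) must be used, and why Assumption 1 (the new point not collocated with $i$ or $j$) is needed so that the ratio constraint is non-degenerate. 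The other delicate point is checking that the quadrilateral/triangle sub-configurations created are covered by Proposition \ref{pro1} or reduce to it, which keeps the geometric lemma invocations clean rather than re-deriving intersection-of-loci facts from scratch.
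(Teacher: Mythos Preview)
Your overall scaffold is exactly the paper's: restrict $q'$ to the old vertices, use global SA-RoD rigidity of $(\mathcal{G}(A,D),p)$ to get $q\in S_p$, then locate $q_{n+1}$ via the new constraints. Type $A_2$ is handled correctly and matches the paper.

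The gap is in Types $A_1$, $D_1$, and $D_2$: you are systematically discarding the constraints at \emph{all but one} of the old neighbours, and the remaining constraints do not pin down $q_{n+1}$. Concretely, in Type $A_1$ you keep only the SA $\alpha_{(n+1)ij}$ at the new vertex and a single RoD at the $\mathcal{V}_D$-neighbour $i$; this gives the signed angle at $n+1$ and the length $d_{i,n+1}$, which is the ambiguous SSA configuration in the triangle $\Delta ij(n+1)$. The two candidate positions for $n+1$ lie on the \emph{same} inscribed-angle arc (same signed angle), so your proposed fix ``signed versus unsigned angle rules out the reflection across $q_iq_j$'' does not apply---the spurious solution is not that reflection. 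Likewise in Type $D_2$: the three RoDs $\kappa_{(n+1)ij},\kappa_{(n+1)ik},\kappa_{(n+1)jk}$ at $n+1$ are not independent (the third is the quotient of the other two), so you are intersecting exactly two Apollonius circles, which generically meet in two points. An explicit counterexample: with $i=(0,0),j=(3,0),k=(0,4)$ and $n+1=(1,1)$, the point $(-1,-3)$ has identical distance ratios to $i,j,k$.

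What the paper does, and what you are missing, is to also exploit the constraints at the \emph{other} old neighbours. Each of $i,j$ (and $k$ in $D_2$) has at least one further neighbour $m$ in the original graph (global rigidity forces degree $\ge 2$); the new SA or RoD at that vertex relates $n+1$ to $m$, and then Theorem \ref{shp} applied to the old framework converts this into a SA or RoD in the virtual triangle $\Delta ij(n+1)$, e.g.\ $\alpha_{ij,n+1}=\alpha_{im,n+1}-\alpha_{imj}$ or $\kappa_{ji,n+1}=\kappa_{jl,n+1}/\kappa_{jli}$. One then has constraints at all three vertices of $\Delta ij(n+1)$ (two SAs and one RoD, or one SA and two RoDs), which determines the triangle shape uniquely with no SSA/Apollonius ambiguity. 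For $D_2$ the same trick yields the actual distances $d_{i,n+1},d_{j,n+1},d_{k,n+1}$ (not merely their mutual ratios), and three ordinary circles about non-collinear centres meet in at most one point.
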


 To reduce the number of edges added during the construction, we further propose the following {\it 2-vertex addition} to preserve global rigidity.
\begin{definition}\label{2-ver}(2-Vertex Addition) Given a framework $(\mathcal{G}(A,D),p)$ with $|\mathcal{V}|=n$, add two vertices $n+1,n+2$ and three edges $(j,n+1), (n+1,n+2), (n+2,i)$, denote $\mathcal{V}^{\prime}=\mathcal{V}\cup\{n+1,n+2\}$. Consider a bipartition $\mathcal{V}^{\prime}=\mathcal{V}^{\prime}_{A}\cup\mathcal{V}^{\prime}_{D}$ such that $\mathcal{V}_{A}\subseteq\mathcal{V}^{\prime}_{A}$ and $\mathcal{V}_{D}\subseteq\mathcal{V}^{\prime}_{D}$, then the operation is called a 2-vertex addition if exactly three non-collinear vertices among $i,j,n+1,n+2$ belong to $\mathcal{V}^{\prime}_{A}$, and the remaining vertex belongs to $\mathcal{V}^{\prime}_{D}$.
\end{definition}
\begin{figure}[!h]
\centerline{\includegraphics[width=0.4\columnwidth]{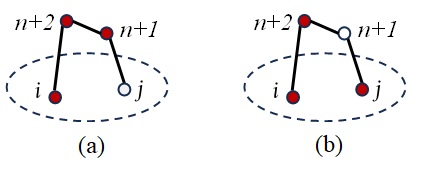}}
\caption{2-vertex addition. (a) The newly added two vertices belong to $\mathcal{V}_{A}$. (b) The newly added two vertices belong to $\mathcal{V}_{D}$ and $\mathcal{V}_{A}$. The dots in red represent vertices in $\mathcal{V}_{A}$.}
\label{lf118v}
\end{figure}

The following lemma demonstrates that 2-vertex addition can also preserve global SA-RoD rigidity. The proof is similar to that in Appendix $\ref{padd}$.
\begin{lemma}\label{2ver}
Given a globally SA-RoD rigid framework $(\mathcal{G}(A,D),p)$ in $\mathbb{R}^{2}$, after performing a 2-vertex addition,  the induced framework $(\mathcal{G}^{\prime}(A,D),p^{\prime})$ is globally SA-RoD rigid as well. 
\end{lemma}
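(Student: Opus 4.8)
The plan is to reduce the 2-vertex addition to a sequence of operations whose rigidity-preservation has already been established, namely the 1-vertex additions of Lemma~\ref{1ver}, together with the quadrilateral criterion of Proposition~\ref{pro1}. Concretely, suppose we perform a 2-vertex addition on a globally SA-RoD rigid framework $(\mathcal{G}(A,D),p)$ with new vertices $n+1,n+2$ and new edges $(j,n+1),(n+1,n+2),(n+2,i)$, where exactly three of $i,j,n+1,n+2$ lie in $\mathcal{V}^{\prime}_{A}$ and are non-collinear, and the fourth lies in $\mathcal{V}^{\prime}_{D}$. The core observation is that the added ``ear'' $i\!-\!n\!+\!2\!-\!n\!+\!1\!-\!j$, together with the edge $(i,j)$ — which exists in $\mathcal{K}(A,D)$ — forms a quadrilateral $\Diamond\, i\,j\,(n{+}1)\,(n{+}2)$ whose vertex bipartition satisfies case~1) of Proposition~\ref{pro1} (three non-collinear nodes in $\mathcal{V}_A$, one in $\mathcal{V}_D$). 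Hence that quadrilateral is globally SA-RoD rigid on its own.

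The key steps, in order, are as follows. First, I take an arbitrary $q\in f^{-1}_{\mathcal{G}^{\prime}(A,D)}(f_{\mathcal{G}^{\prime}(A,D)}(p^{\prime}))$; by the definition of global SA-RoD rigidity I must show $q\in S_{p^{\prime}}=f^{-1}_{\mathcal{K}^{\prime}(A,D)}(f_{\mathcal{K}^{\prime}(A,D)}(p^{\prime}))$ (Lemma~\ref{lem1o}). Second, I restrict $q$ to the original vertices $\{1,\dots,n\}$: since all SA and RoD constraints induced by $\mathcal{T}_{\mathcal{G}}$ are still present in $\mathcal{T}_{\mathcal{G}^{\prime}}$, the restriction $\bar q$ satisfies $\bar q\in f^{-1}_{\mathcal{G}(A,D)}(f_{\mathcal{G}(A,D)}(p))$, so by hypothesis $\bar q\in S_p$; composing with a trivial motion (rotation, translation, scaling) I may assume $\bar q=\bar p$. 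Third — and this is the crux — I must recover the positions of $n+1$ and $n+2$. Here I invoke the quadrilateral $\Diamond\, i\,j\,(n{+}1)\,(n{+}2)$: the bipartition is exactly case~1) of Proposition~\ref{pro1}, so the SA constraints at the three $\mathcal{V}_A$-vertices among $\{i,j,n{+}1,n{+}2\}$ and the single RoD constraint at the $\mathcal{V}_D$-vertex, all of which are among $\mathcal{T}_{\mathcal{G}^{\prime}}$ restricted to those four vertices, determine the shape of that quadrilateral up to a similarity; since $p_i,p_j$ are already pinned down in $\bar q=\bar p$, the similarity is fixed, so $q_{n+1}=p_{n+1}$ and $q_{n+2}=p_{n+2}$. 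Fourth, I conclude $q=p^{\prime}\in S_{p^{\prime}}$, and since $q$ was arbitrary, $(\mathcal{G}^{\prime}(A,D),p^{\prime})$ is globally SA-RoD rigid.

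I expect the main obstacle to be the bookkeeping in the third step — verifying that the ``available'' constraints on the four vertices $i,j,n{+}1,n{+}2$ are genuinely the constraints of the standalone quadrilateral framework to which Proposition~\ref{pro1} applies. One must check: (a) the edge $(i,j)$ is indeed an edge of $\mathcal{K}(A,D)$ (yes, trivially), and the SA/RoD quantities at $i,j$ that feed Proposition~\ref{pro1} are those determined by $\bar p$ (hence known); (b) the three added edges $(j,n{+}1),(n{+}1,n{+}2),(n{+}2,i)$ together with $(i,j)$ give exactly the cycle structure of $\Diamond\, i\,j\,(n{+}1)\,(n{+}2)$, so the triples used in Proposition~\ref{pro1} all lie in $\mathcal{T}_{\mathcal{G}^{\prime}}$ or are derivable from $\mathcal{T}_{\mathcal{G}^{\prime}}$ via the standard SA-addition / RoD-multiplication identities (since the SA and RoD index sets on this four-cycle are connected, by Lemma~\ref{llem1n} the relevant quantities are global over $\mathcal{K}_4$); and (c) the non-collinearity hypothesis in Definition~\ref{2-ver} is precisely what makes case~1) of Proposition~\ref{pro1} applicable. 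A secondary subtlety is handling the two sub-cases of Figure~\ref{lf118v} (both new vertices in $\mathcal{V}_A$, versus both in $\mathcal{V}_D$) uniformly; in each case ``exactly three of the four are in $\mathcal{V}_A$'' holds, so the same quadrilateral argument applies with $i$ or $j$ playing the role of the lone $\mathcal{V}_D$-vertex. Once the quadrilateral is pinned, the gluing back onto $(\mathcal{G}(A,D),\bar p)$ along the shared edge $(i,j)$ is routine.
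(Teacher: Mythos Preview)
Your proposal is correct and follows essentially the same route as the paper's proof: both reduce to showing shape uniqueness of the quadrilateral $\Diamond\,i\,j\,(n{+}1)\,(n{+}2)$ and then invoke case~1) of Proposition~\ref{pro1}. The paper's proof is simply a three-sentence pointer to the argument of Lemma~\ref{1ver}, while you spell out explicitly the normalization $\bar q=\bar p$ and the recovery of $q_{n+1},q_{n+2}$.

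One small cleanup: in your step~(b) the appeal to Lemma~\ref{llem1n} is unnecessary and slightly misplaced. The quadrilateral constraints at $i$ and $j$ involve the virtual edge $(i,j)\notin\mathcal{E}^{\prime}$, so the triples $(j,i,n{+}1)$ and $(i,j,n{+}2)$ are \emph{not} in $\mathcal{T}_{\mathcal{G}^{\prime}}$; they are recovered not by connectivity of the four-cycle index graph but by exactly what you already said in~(a) --- the global rigidity of $(\mathcal{G}(A,D),p)$ (equivalently Theorem~\ref{shp}, statement~4) pins down $\alpha_{j,l,i}$ or $\kappa_{i,m,j}$ for existing neighbors $l,m$, and these combine with the genuine $\mathcal{T}_{\mathcal{G}^{\prime}}$ constraints $\alpha_{j,l,n+1}$, $\kappa_{i,m,n+2}$ via SA-addition/RoD-multiplication. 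The constraints at $n{+}1$ and $n{+}2$ are directly in $\mathcal{T}_{\mathcal{G}^{\prime}}$, so no further derivation is needed there.
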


Now we are ready to propose Algorithm $\ref{alg:buildSR}$ for construction of globally SA-RoD rigid frameworks.

\begin{algorithm}
\caption{{\it SA-RoD ordering} for constructing globally SA-RoD rigid frameworks} 
\label{alg:buildSR}
\begin{algorithmic}
\STATE{Start with $\mathcal{V}=\{1,2\},\ p(2)=(p_{1}^{\top},p_{2}^{\top})^{\top}$, and $ \mathcal{E}=\{(1,2)\}$.}
\WHILE{$|\mathcal{V}| < k$}
\STATE{Perform a 1-vertex addition or a 2-vertex addition}
\ENDWHILE
\end{algorithmic}
\end{algorithm}

\begin{figure}[!h]
\centerline{\includegraphics[width=\columnwidth]{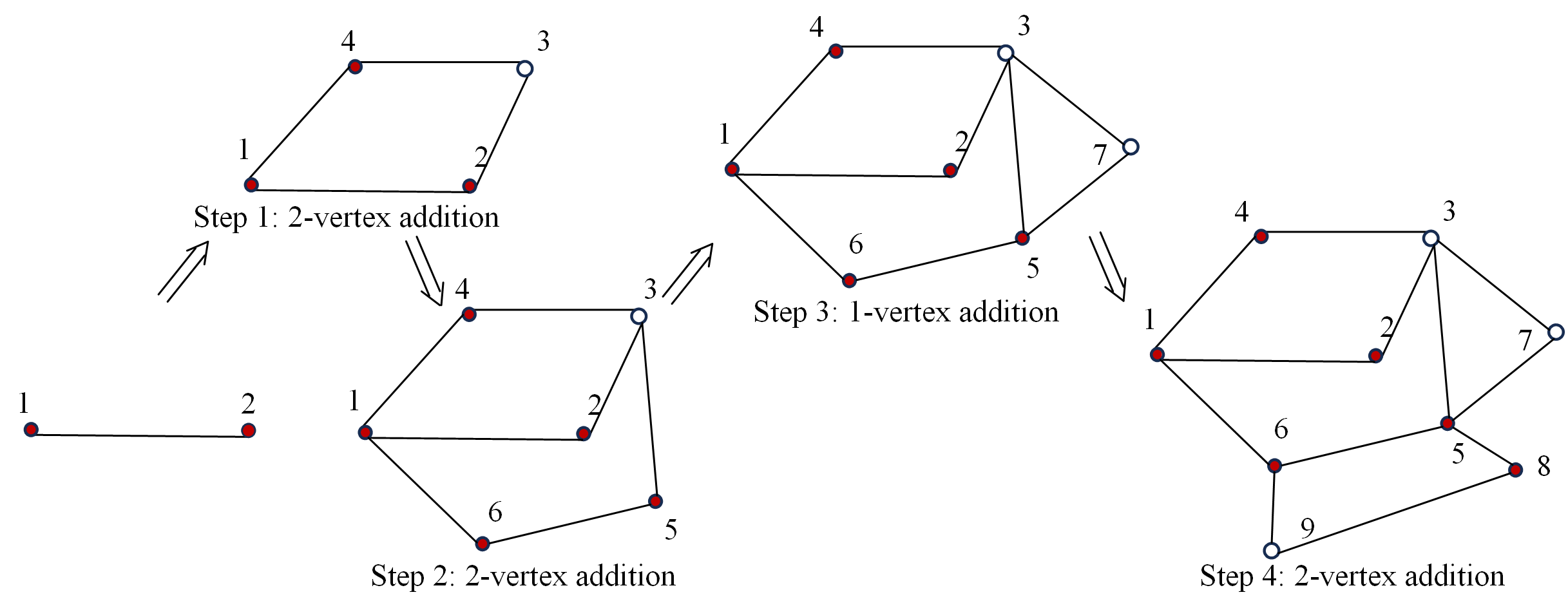}}
\caption{Construction of minimally globally SA-RoD rigid frameworks through an SA-RoD ordering. The dots in red represent vertices in $\mathcal{V}_{A}$.}
\label{fig7}
\end{figure}

Based on Lemma $\ref{1ver}$ and Lemma $\ref{2ver}$, the following theorem concerning global SA-RoD rigidity can be presented.
\begin{theorem}\label{order1}
If a framework $(\mathcal{G}(A,D),p)$ is constructed by an SA-RoD ordering in $\mathbb{R}^{2}$, then it is globally SA-RoD rigid.
\end{theorem}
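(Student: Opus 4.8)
The plan is to argue by induction on the number of construction steps in the SA-RoD ordering. The base case is the framework on $\mathcal{V}=\{1,2\}$ with $\mathcal{E}=\{(1,2)\}$; since $n=2<3$ this initial object is degenerate with respect to Assumption 2, so strictly speaking the induction should be seeded at the first step that produces a framework with $n\geq 3$. I would therefore treat the first operation explicitly: if the first step is a 1-vertex addition producing a triangle, one checks directly that the resulting triangle (for instance the degenerate triangle in Figure~\ref{lf9}(a)) is globally SA-RoD rigid under the bipartition constraints demanded by Definition~\ref{1-ver}; if the first step is a 2-vertex addition producing a quadrilateral, global SA-RoD rigidity follows immediately from Proposition~\ref{pro1}, since that step forces exactly three non-collinear vertices into $\mathcal{V}_A^{\prime}$ and the fourth into $\mathcal{V}_D^{\prime}$, which is precisely case 1) of Proposition~\ref{pro1}. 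This handles the base of the induction.

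For the inductive step, suppose $(\mathcal{G}_{k-1}(A,D),p(k-1))$ is globally SA-RoD rigid, and the next step of the ordering produces $(\mathcal{G}_{k}(A,D),p(k))$. That step is, by construction, one of the four 1-vertex additions of Definition~\ref{1-ver} or a 2-vertex addition of Definition~\ref{2-ver}. In the first case, Lemma~\ref{1ver} applies verbatim and yields that $(\mathcal{G}_{k}(A,D),p(k))$ is globally SA-RoD rigid; in the second case, Lemma~\ref{2ver} applies and gives the same conclusion. Iterating until $|\mathcal{V}|=k$ (when a genuine framework with $n\geq 3$ exists at every stage) completes the proof. The argument is essentially a bookkeeping of which lemma to invoke at each step, together with the observation that the bipartition hypotheses $\mathcal{V}_A\subseteq\mathcal{V}_A^{\prime}$, $\mathcal{V}_D\subseteq\mathcal{V}_D^{\prime}$ built into Definitions~\ref{1-ver} and~\ref{2-ver} ensure the hypotheses of Lemmas~\ref{1ver} and~\ref{2ver} are met, and that Assumption~2 continues to hold for the enlarged framework (both parts remain non-empty, since $\mathcal{V}_A$ and $\mathcal{V}_D$ are never shrunk).

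The only genuine subtlety — and hence the part I expect to require the most care — is the very beginning of the ordering: for the first one or two steps the "framework" $(\{1,2\},\{(1,2)\})$ does not satisfy Assumption~2 (indeed $|\mathcal{V}|=2<3$ and neither $\mathcal{V}_A$ nor $\mathcal{V}_D$ is even assigned yet), so Lemmas~\ref{1ver} and~\ref{2ver} cannot be applied to it directly. I would resolve this by observing that any SA-RoD ordering reaching $k\geq 3$ must, at its first vertex-addition step, create either a triangle or a quadrilateral that is already globally SA-RoD rigid for the reasons above (direct check, or Proposition~\ref{pro1}), and then start the induction there. A secondary point worth a sentence is that the definition of global SA-RoD rigidity is intrinsic to the framework, so "inheriting" rigidity through the construction is exactly what Lemmas~\ref{1ver} and~\ref{2ver} certify; no additional gluing argument is needed beyond what those lemmas already contain.

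Concretely, the write-up would be: (i) state the induction on the number of construction steps; (ii) dispose of the base case by the triangle/quadrilateral argument, citing Proposition~\ref{pro1} for the 2-vertex case; (iii) in the inductive step invoke Lemma~\ref{1ver} or Lemma~\ref{2ver} according to the type of step, noting that the bipartition containments in Definitions~\ref{1-ver}--\ref{2-ver} and the preservation of Assumption~2 guarantee their applicability; (iv) conclude that the final framework $(\mathcal{G}(A,D),p)$ is globally SA-RoD rigid. This is short, and I would not expect to have to grind through any new calculation beyond what is already packaged in the cited lemmas.
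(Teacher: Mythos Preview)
Your proposal is correct and follows essentially the same approach as the paper, which simply states that the theorem follows from Lemma~\ref{1ver} and Lemma~\ref{2ver} without writing out the induction explicitly. Your treatment of the base case (handling the initial edge and the first addition step separately before Assumption~2 kicks in) is in fact more careful than what the paper provides.
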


Now we show two special cases for construction of minimally globally SA-RoD rigid frameworks with generic configurations: 
\begin{itemize}
    \item A framework constructed by $k$ steps of 2-vertex additions, where there exist $2k+2$ vertices and $3k+1$ edges in the underlying graph; 
    \item A framework constructed by $k$ steps of 2-vertex additions and $1$ step of 1-vertex addition, where there exist $2k+1$ vertices and $3k$ edges in the underlying graph. 
\end{itemize}
In these two cases, the lower bound in Lemma $\ref{ll13}$ is tight and the frameworks are minimally globally SA-RoD rigid. Figure $\ref{fig7}$ shows an example of the second case.   

Besides these minimally rigid frameworks, we introduce some special globally SA-RoD rigid frameworks constructed by SA-RoD ordering. If a framework is purely constructed by 1-vertex additions in Algorithm $\ref{alg:buildSR}$, it is said to have a {\it bilateration ordering} \cite{lyjb06}. Furthermore, if a  bilateration ordering is generated by Type $A_{1}$ and Type $D_{1}$ 1-vertex additions in turn, we call it a Type $(A_{1},D_{1})$ bilateration ordering. Similarly, bilateration orderings of Type $(D_{1},A_{1})$, Type $(A_{1},D_{2})$, Type $(A_{2},D_{1})$, Type $(A_{2},D_{2})$, Type $(D_{2},A_{1})$, Type $(D_{1},A_{2})$, and Type $(D_{2},A_{2})$ can be defined. If a framework is purely constructed by 2-vertex additions in Algorithm $\ref{alg:buildSR}$ and every edge lies in a quadrilateral, it is said to be a quadrilateralized framework. 

Note that frameworks constructed by different SA-RoD orderings can exhibit different SA or RoD connectivity properties, which can affect localization algorithm synthesis to be discussed in Subsection $\ref{sub:loccon}$. Specifically, in Algorithm $\ref{alg:buildSR}$, if 2-vertex additions and Type $D_{1}$ 1-vertex additions occur in turn, and there exists at least one edge connecting two vertices in $\mathcal{V}_{D}$, then we can obtain a framework, which is neither SA connected nor RoD connected. Such a framework is said to be generated by a Type $(2,D_{1})$ SA-RoD ordering. Figure $\ref{lf117}$ presents three frameworks generated with a Type $(D_{1},A_{1})$ bilateration ordering, 2-vertex additions, and a Type $(2, D_{1})$ SA-RoD ordering, respectively. 

\begin{figure}[!h]
\centerline{\includegraphics[width=0.85\columnwidth]{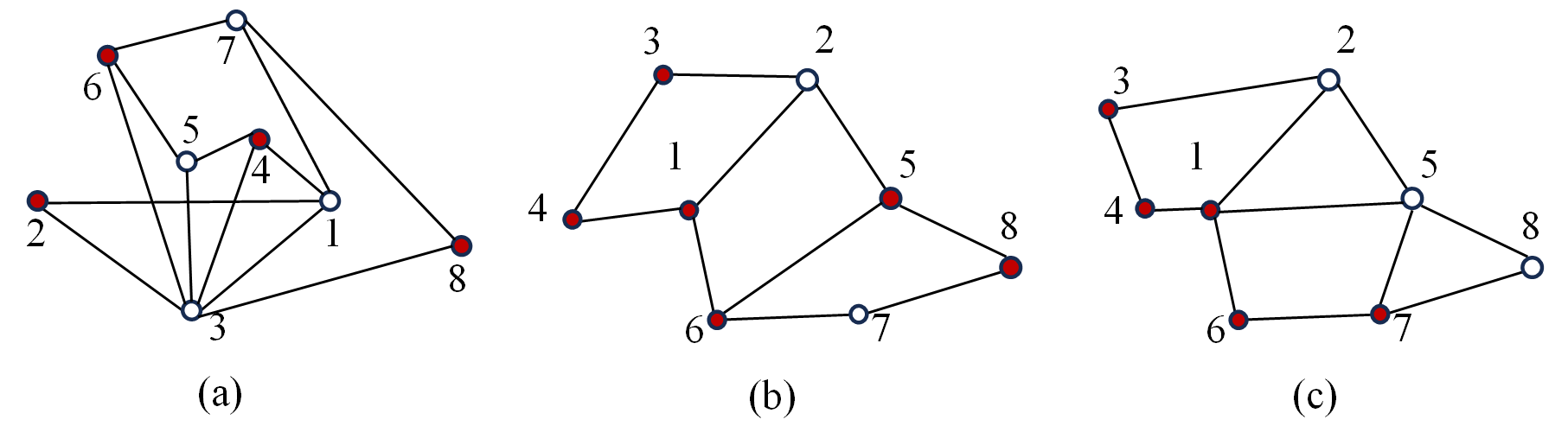}}
\caption{(a) A framework with a Type $(D_{1},A_{1})$ bilateration ordering. It is RoD connected over the underlying graph. (b) A quadrilateralized framework constructed by 2-vertex additions. It is SA connected over the underlying graph.(c) A framework with a Type $(2, D_{1})$ SA-RoD ordering. It is neither RoD connected nor SA connected over the underlying graph. The dots in red represent vertices in $\mathcal{V}_{A}$.}
\label{lf117}
\end{figure}
\subsection{Merging globally SA-RoD rigid frameworks}
Merging is an important operation to increase the network scale while maintaining rigidity in formation control and network localization. It results in changes in the vertex set or interconnection structure of sensing and communication links. In this subsection, we study merging by two types of operations. The first type is connecting two globally SA-RoD rigid frameworks by adding edges. The second type is contracting vertices of two globally SA-RoD rigid frameworks. Existing works on merging distance or bearing rigid graphs can be found in \cite{eren2005merging} and \cite{lyjbmin}.  
 
To ensure global bearing rigidity, three edges are required to be added in merging two existing bearing rigid graphs in $\mathbb{R}^{2}$ \cite{lyjbmin}. We will show that under the setting of merging globally SA-RoD rigid frameworks, adding two edges suitably is sufficient to ensure global rigidity.  The reason lies in the SA-RoD rigidity results of quadrilaterals in Proposition $\ref{pro1}$. We now state the following theorem. There might be a slight notational overlap ($A_{i},D_{i},i=1,2$), but it will not cause any ambiguity. 
\begin{theorem}[Adding edges]\label{mert}
Given two globally SA-RoD rigid frameworks $(\mathcal{G}_{1}(A_{1},D_{1}),p)$ and $(\mathcal{G}_{2}(A_{2},D_{2}),q)$ in $\mathbb{R}^{2}$, choose vertices $i,m\in\mathcal{V}_{1}$ and $j,k\in\mathcal{V}_{2}$ and the merged framework $(\mathcal{G}(A,D),r)$ is denoted by $\mathcal{V}_{A}=\mathcal{V}_{A_{1}}\cup \mathcal{V}_{A_{2}},\mathcal{V}_{D}=\mathcal{V}_{D_{1}}\cup \mathcal{V}_{D_{2}},r=(p^{\top},q^{\top})^{\top}$, and $\mathcal{E}=\mathcal{E}_{1}\cup\mathcal{E}_{2}\cup\{(m,k),(i,j)\}$. If $|\mathcal{V}_{A}\cap\{i,m,j,k\}|=3$ and the corresponding three points are non-collinear, then $(\mathcal{G}(A,D),r)$ is globally SA-RoD rigid. 
\end{theorem}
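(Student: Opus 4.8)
\textbf{Proof plan for Theorem~\ref{mert}.} The plan is to reduce the global SA-RoD rigidity of the merged framework $(\mathcal{G}(A,D),r)$ to the shape-uniqueness characterization in Theorem~\ref{shp}, exactly as in the proof of Lemma~\ref{1ver} and Lemma~\ref{2ver}. Concretely, suppose $s\in f^{-1}_{\mathcal{G}(A,D)}(f_{\mathcal{G}(A,D)}(r))$; I want to show $s\in S_{r}$. Write $s=[s_{1}^{\top},s_{2}^{\top}]^{\top}$ according to the partition $\mathcal{V}=\mathcal{V}_{1}\cup\mathcal{V}_{2}$. First I would observe that restricting $s$ to the edges and triples of $\mathcal{G}_{1}$ (respectively $\mathcal{G}_{2}$) gives a configuration preserving all SA and RoD constraints of $(\mathcal{G}_{1}(A_{1},D_{1}),p)$ (respectively $(\mathcal{G}_{2}(A_{2},D_{2}),q)$): this is because $\mathcal{E}=\mathcal{E}_{1}\cup\mathcal{E}_{2}\cup\{(m,k),(i,j)\}$ and the partition of $\mathcal{V}$ restricts to the original partitions, so every triple in $\mathcal{T}_{A_{1}}\cup\mathcal{T}_{D_{1}}$ (and likewise for the second framework) is a triple of $\mathcal{G}(A,D)$ with the same attribute. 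By global SA-RoD rigidity of each piece, $s_{1}\in S_{p}$ and $s_{2}\in S_{q}$, so there exist $\xi_{1},\xi_{2}\in\mathbb{R}^{2}$, $\theta_{1},\theta_{2}\in[0,2\pi)$, $c_{1},c_{2}>0$ with $s_{1}=\mathbf{1}_{|\mathcal{V}_{1}|}\otimes\xi_{1}+c_{1}(I\otimes\mathcal{R}(\theta_{1}))p$ and $s_{2}=\mathbf{1}_{|\mathcal{V}_{2}|}\otimes\xi_{2}+c_{2}(I\otimes\mathcal{R}(\theta_{2}))q$.

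The heart of the argument is then to show that the two similarity transformations agree, i.e. $\xi_{1}=\xi_{2}$, $\theta_{1}=\theta_{2}$, $c_{1}=c_{2}$, which forces $s\in S_{r}$. Here I would use the two new edges $(m,k)$ and $(i,j)$ together with the three non-collinear vertices among $\{i,m,j,k\}$ that lie in $\mathcal{V}_{A}$. Consider the quadrilateral $\Diamond$ on vertices $i,m,j,k$ in the merged framework: its four edges are $(i,m)$ or a chain inside one piece — more precisely, the relevant closed four-cycle uses $(m,k),(k,j)$-type connectivity inside $\mathcal{G}_{2}$, the edge $(j,i)$, and connectivity inside $\mathcal{G}_{1}$ from $i$ to $m$. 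Since the shapes of $(\mathcal{G}_{1},p)$ and $(\mathcal{G}_{2},q)$ are already pinned down up to similarity, the segments $p_{m}-p_{i}$ and $q_{k}-q_{j}$ (as vectors up to the respective similarity) are known, and the two new edges $(m,k),(i,j)$ impose exactly the SA and RoD constraints attached to the three vertices of $\{i,m,j,k\}$ in $\mathcal{V}_{A}$ plus the fourth in $\mathcal{V}_{D}$. By Proposition~\ref{pro1}, case 1) (three $\mathcal{V}_{A}$-vertices, one $\mathcal{V}_{D}$-vertex, non-collinear), the shape of this quadrilateral is uniquely determined by these SA-RoD constraints up to a single uniform similarity. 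Equating the image of this quadrilateral under the transformation coming from $s_{1}$ with that coming from $s_{2}$ on the shared relative data, and using the uniqueness of $(c,\theta)$ noted after the definition of $S_{p}$, yields $c_{1}=c_{2}$, $\theta_{1}=\theta_{2}$; then translational consistency at any one of $p_{i},p_{m}$ matched against $q_{j},q_{k}$ through the quadrilateral gives $\xi_{1}=\xi_{2}$. Hence $s=\mathbf{1}_{n}\otimes\xi_{1}+c_{1}(I_{n}\otimes\mathcal{R}(\theta_{1}))r\in S_{r}$, and since $f^{-1}_{\mathcal{K}(A,D)}(f_{\mathcal{K}(A,D)}(r))=S_{r}$ by Lemma~\ref{lem1o}, the framework $(\mathcal{G}(A,D),r)$ is globally SA-RoD rigid.

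The main obstacle I anticipate is making precise the claim that the two added edges, together with the boundary vertices $\{i,m,j,k\}$, actually furnish a well-defined quadrilateral whose constraint set matches one of the globally rigid cases in Proposition~\ref{pro1}. The edge set $\mathcal{E}_{1}\cup\mathcal{E}_{2}\cup\{(m,k),(i,j)\}$ does not literally contain the edges $(i,m)$ and $(j,k)$, so the "quadrilateral" is a virtual one: its two opposite sides are relative position vectors that are already determined (up to similarity) inside each rigid piece rather than being edges of $\mathcal{G}$. I would handle this by arguing at the level of determined quantities — once $s_{1}\in S_{p}$ and $s_{2}\in S_{q}$, the vectors $s_{m}-s_{i}$ and $s_{k}-s_{j}$ are known functions of $(c_{1},\theta_{1})$ and $(c_{2},\theta_{2})$ respectively, and the SA/RoD constraints on the triples at the three $\mathcal{V}_{A}$-vertices and the one $\mathcal{V}_{D}$-vertex among $\{i,m,j,k\}$ (which do involve only the real edges $(m,k),(i,j)$ and edges internal to the pieces incident to these four vertices) are precisely the constraints of the quadrilateral case in Proposition~\ref{pro1}. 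A careful bookkeeping of which triples in $\mathcal{T}_{A},\mathcal{T}_{D}$ touch the cut is the one genuinely technical point; everything else is a direct application of Proposition~\ref{pro1}, Lemma~\ref{lem1o}, and the uniqueness of similarity parameters.
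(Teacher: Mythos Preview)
Your proposal is correct and follows essentially the same route as the paper's proof: restrict an arbitrary $s\in f^{-1}_{\mathcal{G}(A,D)}(f_{\mathcal{G}(A,D)}(r))$ to each piece to obtain two similarity transformations, then use the quadrilateral on $\{i,m,j,k\}$ together with Proposition~\ref{pro1} case 1) to force the two similarities to coincide. The technical obstacle you flag about the ``virtual'' sides $(i,m)$ and $(j,k)$ is exactly the point the paper addresses by appealing to the proof of Lemma~\ref{1ver}: since each sub-framework is globally SA-RoD rigid, Theorem~\ref{shp} part 4) guarantees that the SAs at $m,j,k$ and the RoD at $i$ in the quadrilateral $\Diamond mijk$ are determined by the rigidity function even though $(i,m)$ and $(j,k)$ are not edges of $\mathcal{G}$, which is precisely your ``determined quantities'' argument.
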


The proof is presented in Appendix $\ref{pmer}$. However, if the chosen four vertices $i,m,j,k$ belong to $\mathcal{V}_{A}$ and the corresponding four points are non-collinear, the newly merged framework by adding three edges is also globally SA-RoD rigid. This can be proved similarly by combining Lemma $\ref{eqSA1}$ and Theorem $\ref{shp}$. 

Analogously to \cite[Theorem 3.6 ]{lyjbmin}, we present the second type of merging by contracting vertices of globally SA-RoD rigid frameworks in the following theorem. It is important to note that this type of operation is allowed only among vertices of the {\it same attributes} (i.e., they belong to the same part of the bipartition). The proof is similar to that of Theorem $\ref{mert}$ and will be omitted. 
\begin{theorem}[Contracting vertices]\label{mert1}
Given two globally SA-RoD rigid frameworks $(\mathcal{G}_{1}(A_{1},D_{1}),p)$ and $(\mathcal{G}_{2}(A_{2},D_{2}),q)$ in $\mathbb{R}^{2}$, suppose there exist two pairs of vertices $\{i,j\}$ and $\{m,k\}$ with $i,m\in\mathcal{V}_{1}$ and $j,k\in\mathcal{V}_{2}$, such that $p_{i}=q_{j},p_{m}=q_{k}$, and vertices in each pair share the same attributes. Let $\mathcal{V}_{A}=\mathcal{V}_{A_{1}}\cup \mathcal{V}_{A_{2}},\mathcal{V}_{D}=\mathcal{V}_{D_{1}}\cup \mathcal{V}_{D_{2}}$, and $\mathcal{E}=\mathcal{E}_{1}\cup\mathcal{E}_{2}$. If the merged framework $(\mathcal{G}(A,D),r)$ is obtained from $\mathcal{V}_{A}, \mathcal{V}_{D},\mathcal{E}$ and $(p^{\top},q^{\top})^{\top}$ by contracting the vertex pairs $\{i,j\}$ and $\{m,k\}$, then $(\mathcal{G}(A,D),r)$ is globally SA-RoD rigid. 
\end{theorem}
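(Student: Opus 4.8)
The plan is to mirror the proof of Theorem~\ref{mert} (the adding-edges version), since contracting vertex pairs is morally the limiting case of adding edges when the endpoints coincide. First I would set up the merged framework $(\mathcal{G}(A,D),r)$ explicitly: the contraction identifies $p_i=q_j$ into a single vertex and $p_m=q_k$ into a single vertex, so the resulting configuration $r$ is well-defined, and the hypothesis that paired vertices share the same attribute guarantees that the bipartition $\mathcal{V}_A=\mathcal{V}_{A_1}\cup\mathcal{V}_{A_2}$, $\mathcal{V}_D=\mathcal{V}_{D_1}\cup\mathcal{V}_{D_2}$ is consistent (no vertex is asked to be both SA-type and RoD-type). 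I would then invoke Theorem~\ref{shp} to reduce global SA-RoD rigidity to the statement that every SA in $\mathcal{T}_{\mathcal{K}}$ and every RoD in $\mathcal{T}_{\mathcal{K}}$ of the merged framework is uniquely determined by the constraint sets $\{\alpha_{ijk}:(i,j,k)\in\mathcal{T}_A\}$ and $\{\kappa_{ijk}:(i,j,k)\in\mathcal{T}_D\}$.

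Next I would take any $\hat r\in f^{-1}_{\mathcal{G}(A,D)}(f_{\mathcal{G}(A,D)}(r))$ and argue piece by piece. Restricting $\hat r$ to the vertices of $\mathcal{G}_1$ gives a configuration satisfying exactly the SA-RoD constraints of $(\mathcal{G}_1(A_1,D_1),p)$; by global SA-RoD rigidity of that framework and Theorem~\ref{shp}, this restriction lies in $S_p$, i.e.\ it is obtained from $p$ by a uniform rotation, translation, and positive scaling, say with parameters $(\xi_1,\theta_1,c_1)$. Likewise the restriction to $\mathcal{G}_2$ is obtained from $q$ by $(\xi_2,\theta_2,c_2)$. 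The contracted vertex pairs then force consistency: $\hat r$ at the merged vertex (from $i$) equals $\hat r$ at the merged vertex (from $j$), which says $\mathbf{1}\otimes\xi_1+c_1\mathcal{R}(\theta_1)p_i=\mathbf{1}\otimes\xi_2+c_2\mathcal{R}(\theta_2)q_j$ and similarly at the other pair. Since the two shared points $p_i,p_m$ (equivalently $q_j,q_k$) are distinct by Assumption~1, these two vector equations pin down the relative transformation between the two copies uniquely: subtracting them eliminates translation and yields $c_1\mathcal{R}(\theta_1)(p_i-p_m)=c_2\mathcal{R}(\theta_2)(q_j-q_k)$, and because $p_i-p_m=q_j-q_k$ is a fixed nonzero vector this forces $c_1\mathcal{R}(\theta_1)=c_2\mathcal{R}(\theta_2)$, hence $c_1=c_2$, $\theta_1=\theta_2$, and then $\xi_1=\xi_2$. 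Therefore $\hat r$ is a single global similarity transform of $r$, so $\hat r\in S_r$, and by Theorem~\ref{shp} the merged framework is globally SA-RoD rigid.

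The one genuinely new point compared to Theorem~\ref{mert} is that there are no new edges and hence no new cross-constraints; all the ``gluing'' is done purely by the geometric coincidence $p_i=q_j$, $p_m=q_k$. So the step I expect to need the most care is verifying that two coincidence conditions at two \emph{distinct} points really are enough to rigidify the relative placement of the two similarity classes. This is where Assumption~1 (distinct points $p_i\neq p_m$) is essential: a single shared vertex would leave a one-parameter rotation-scaling ambiguity about that vertex. I would spell out the two-point argument as the linear-algebra lemma that the map $(\xi,c\mathcal{R}(\theta))\mapsto(\xi+c\mathcal{R}(\theta)p_i,\ \xi+c\mathcal{R}(\theta)p_m)$ is injective when $p_i\neq p_m$. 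Everything else is a direct transcription of the proof of Theorem~\ref{mert}, which is why the paper states that the proof is omitted; I would therefore present the above as a short remark-style argument rather than a full write-out.
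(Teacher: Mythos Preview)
Your proposal is correct and follows essentially the approach the paper intends; the paper itself says ``The proof is similar to that of Theorem~\ref{mert} and will be omitted,'' and your adaptation---restrict $\hat r$ to each piece, obtain similarity parameters $(\xi_\ell,\theta_\ell,c_\ell)$ from global rigidity of $(\mathcal{G}_\ell,\cdot)$, then use the two coinciding vertex pairs to force $(\xi_1,\theta_1,c_1)=(\xi_2,\theta_2,c_2)$---is precisely that adaptation. The only cosmetic slip is writing $\mathbf{1}\otimes\xi_1+c_1\mathcal{R}(\theta_1)p_i$ for a single vertex; drop the $\mathbf{1}\otimes$ there, since $p_i\in\mathbb{R}^2$ already.
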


 In Figure $\ref{lf118m}$, two frameworks with vertices $\{1,2,3,4,5,6\}$ and $\{7,8,9,10\}$ are globally SA-RoD rigid. Three different merging approaches are presented by adding two edges, three edges, and contracting vertices, respectively. If the four chosen vertices in Theorem $\ref{mert}$ all belong to $\mathcal{V}_{A}$, adding three edges, such as $(2,7),(2,10)$ and $(3,10)$ in Figure $\ref{lf118m}(b)$, can preserve the global SA-RoD rigidity when $r_{2}, r_{7},r_{3},r_{10}$ are non-collinear. Similarly, contracting two pairs of vertices $\{2,7\}$ and $\{3,10\}$ as in Figure $\ref{lf118m}(c)$ can also ensure global SA-RoD rigidity. 
\begin{figure}[!h]
\centerline{\includegraphics[width=0.9\columnwidth]{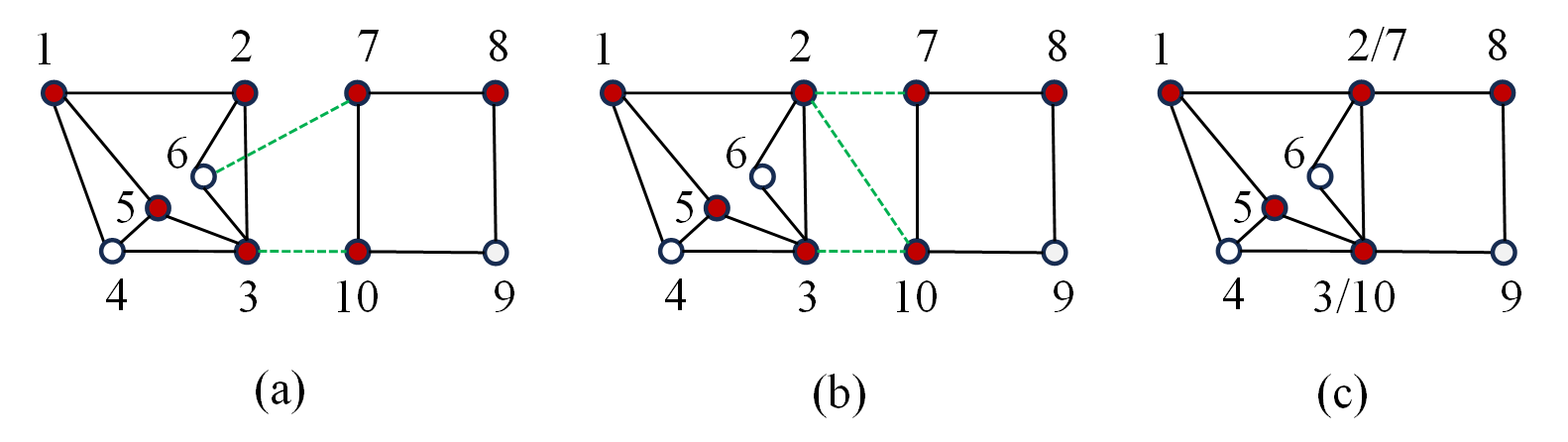}}
\caption{ Different merging approaches for two globally SA-RoD rigid frameworks. (a) Adding two edges $(6,7)$ and $(3,10)$. (b) Adding three edges $(2,7),(2,10)$ and $(3,10)$. (c) Contracting two pairs of vertices $(2,7)$ and $(3,10)$. The dots in red represent vertices in $\mathcal{V}_{A}$.}
\label{lf118m}
\end{figure}

\section{SNL with SA and RoD measurements}
In this section, we will provide a mathematical formulation of the SNL problem with SA and RoD measurements. Based on the developed rigidity theory, we will discuss the localizability of sensor networks, and propose localization algorithms with guaranteed convergence for both centralized and distributed SNL problems.

\subsection{Problem Formulation}
Given a static sensor network in $\mathbb{R}^{2}$ composed of $n_{a}$ {\it anchors} whose locations are known and $n_{f}$ free nodes whose locations are to be determined. The sensor network is indexed by $\mathcal{V}=\mathcal{V}_{a}\cup\mathcal{V}_{f}$, where $\mathcal{V}=\{1,2,\ldots,n\}, \mathcal{V}_{a}=\{1,2,\ldots,n_{a}\}$ is the set of anchors, $\mathcal{V}_{f}=\{n_{a}+1,n_{a}+2,\ldots,n_{a}+n_{f}\}$ is the set of free nodes. Consider a connected undirected graph $\mathcal{G}(A,D)=(\mathcal{V}_{A}\cup\mathcal{V}_{D},\mathcal{E})$ associated with a given bipartition of the vertex set representing the sensing graph among all nodes. Each sensor in $\mathcal{V}_{A}$ has the capability of sensing SAs subtended at itself.  Each sensor in $\mathcal{V}_{D}$ is capable of measuring RoDs with respect to any two neighbor nodes.

SNL aims at finding the locations of free nodes in $\mathcal{V}_{f}$ when anchor locations, SAs measured by sensors in $\mathcal{V}_{A}$, and  RoDs measured by sensors in $\mathcal{V}_{D}$ are provided. 
Denote the location of sensor $i$ as $x_{i}\in\mathbb{R}^{2}$, and $x=(x^{\top}_{1},\ldots,x^{\top}_{n})^{\top}\in\mathbb{R}^{2n}$. The true locations of sensors are represented by $p=(p^{\top}_{1},\ldots,p^{\top}_{n})^{\top}\in\mathbb{R}^{2n}$. An important observation is that the bearing $\frac{x_{i}-x_{j}}{||x_{i}-x_{j}||}$ and distance $||x_{i}-x_{j}||$ can be obtained for any pair of anchors $i,j\in\mathcal{V}_{a}$ even if $(i,j)\notin\mathcal{E}$. Employing the handling in \cite{lyjb09, lyjb07}, the augmented framework is defined as $(\hat{\mathcal{G}}(A,D),x)$ with $\hat{\mathcal{G}}=(\mathcal{V}_{A}\cup\mathcal{V}_{D},\hat{\mathcal{E}})$, $\hat{\mathcal{E}}=\mathcal{E}\cup\{(i,j)\in\mathcal{V}^{2}:i,j\in\mathcal{V}_{a}\}$. Namely, $\hat{\mathcal{E}}$ is obtained by adding edges to $\mathcal{E}$ which connect every pair of anchors to each other. In this paper, $(\hat{\mathcal{G}}(A,D),x,\mathcal{V}_{a})$ represents a sensor network to be localized and $(\hat{\mathcal{G}}(A,D),x)$ is called the underlying framework.
Then the centralized SNL problem can be formulated as follows.

\textit{Problem 1:} 
\begin{align}\label{snl11}
&\text{find}\ x,
\notag
\\\text{s.t.}\ \frac{x_{k}-x_{i}}{||x_{k}-x_{i}||}&=\mathcal{R}(\theta_{ijk})\frac{x_{j}-x_{i}}{||x_{j}-x_{i}||}, (i,j,k)\in\mathcal{T}_{\hat{A}},
\notag
\\ \frac{||x_{t}-x_{r}||}{||x_{s}-x_{r}||}&=\kappa_{rst}, (r,s,t)\in\mathcal{T}_{\hat{D}},
\notag
\\ x_{i}&=p_{i}, i\in\mathcal{V}_{a},
\end{align}
where $\kappa_{rst}$ is the RoD information measured by $r\in\mathcal{V}_{D}$, $\theta_{ijk}$ is the SA information measured by $i\in\mathcal{V}_{A}$, $\mathcal{T}_{\hat{A}}=\{(u,v,w)\in\mathcal{V}^{3}:u\in\mathcal{V}_{A},(u,v),(u,w)\in\hat{\mathcal{E}},v<w\}$ is the SA index set determining all the SAs subtended in the framework, $\mathcal{T}_{\hat{D}}=\{(u,v,w)\in\mathcal{V}^{3}:u\in\mathcal{V}_{D},(u,v),(u,w)\in\hat{\mathcal{E}},v<w\}$ is the RoD index set determining all the RoDs subtended in the framework. 


 \begin{remark}
  Due to the unavoidable noises in relative measurements, SNL in a noisy environment has received considerable attention\cite{anderson2010formal,chenji2012toward,xiao2017noise}. In the presence of noises, we denote the noisy SA and RoD measurements in Problem 1 as $\tilde{\theta}_{ijk}=\theta_{ijk}+u_{ijk}$, $\tilde{\kappa}_{rst}=\kappa_{rst}+v_{rst}$, where $u_{ijk}$ and $v_{rst}$ are noise-induced terms. Then, the SNL problem can be modeled as a likelihood maximization problem as in \cite{lyjb09}, where $x,\theta_{ijk}$ and $\kappa_{rst}$ are unknown variables to be determined. We leave the detailed analysis of noisy SNL with  SA and RoD measurements as a topic of our future work.
\end{remark} 
\subsection{SA-RoD localizability}

In this subsection, we study the uniqueness of the solution to Problem 1. 
First, we introduce the notion of {\it SA-RoD localizability}.
\begin{definition}
A sensor network is called SA-RoD localizable if there exists a unique feasible  solution to $(\ref{snl11})$.
\end{definition}

\begin{figure}[h]
\centerline{\includegraphics[width=\columnwidth]{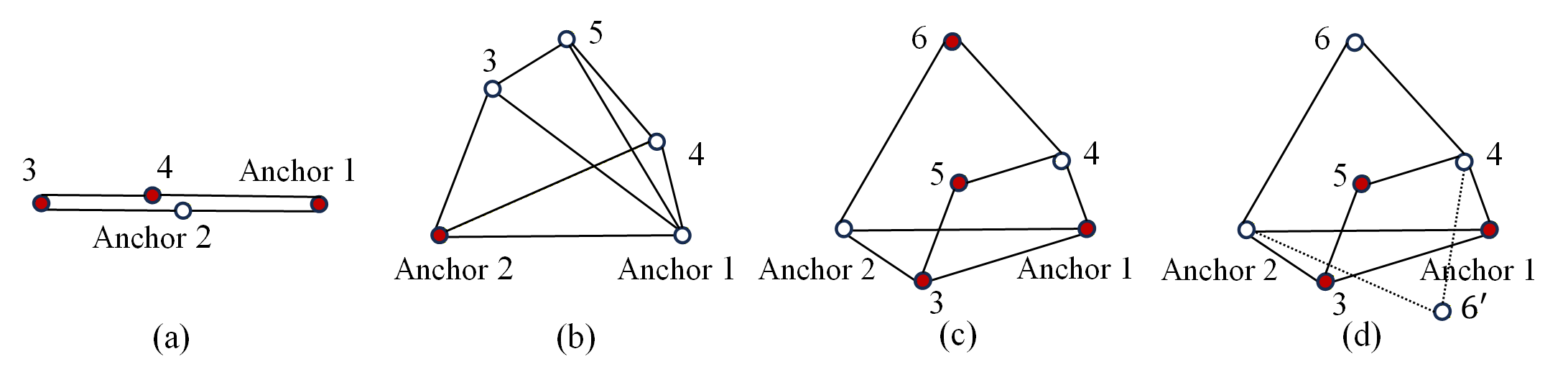}}
\caption{ (a) A sensor network that is not SA-RoD localizable. Node 4 can move freely on the segment connecting nodes 1 and 3. (b) A sensor network that is SA-RoD localizable. The key point is that vertex 5 has degree 3 which avoids possible reflections. (c) A sensor network that is SA-RoD localizable. (d) A sensor network that is not SA-RoD localizable. The location of sensor 6 can not be uniquely determined due to a possible flipping ambiguity. The dots in red represent nodes in $\mathcal{V}_{A}$.}
\label{lf11n}
\end{figure}
Figure $\ref{lf11n}$ presents four sensor networks to illustrate SA-RoD localizability. Note that the only difference between the two sensor networks in Figures $\ref{lf11n}(c)$ and  $\ref{lf11n}(d)$ lies in the type of measurements for sensor 6. This indicates the critical effects of bipartition on SA-RoD localizability. Furthermore, we observe that for localizable  networks in Figures $\ref{lf11n}(b)$ and $\ref{lf11n}(c)$, the underlying frameworks are globally SA-RoD rigid.
To establish the equivalence between SA-RoD localizability and global rigidity, we propose the following assumption for Problem 1.

\textit{Assumption 3:} Problem 1 is feasible, $\mathcal{V}_{a}\cap\mathcal{V}_{A}\neq\emptyset$, and  $\mathcal{V}_{a}\cap\mathcal{V}_{D}\neq\emptyset.$

Now we provide the following theorem. The proof is presented in Appendix \ref{peq}.
\begin{theorem}\label{equv}
Under Assumption 3, a sensor network $(\hat{\mathcal{G}}(A,D),x,\mathcal{V}_{a})$ is SA-RoD localizable if and only if the framework $(\hat{\mathcal{G}}(A,D),x)$ is globally SA-RoD rigid.
\end{theorem}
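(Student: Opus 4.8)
The plan is to prove both directions by relating the feasible set of Problem 1 to the configuration set $f^{-1}_{\hat{\mathcal{G}}(A,D)}(f_{\hat{\mathcal{G}}(A,D)}(p))$ of the underlying framework. First I would observe that, because of the augmentation $\hat{\mathcal{E}}=\mathcal{E}\cup\{(i,j):i,j\in\mathcal{V}_a\}$, every SA and RoD constraint in $(\ref{snl11})$ is exactly an SA or RoD constraint of the augmented framework $(\hat{\mathcal{G}}(A,D),x)$; conversely the measured data $\theta_{ijk},\kappa_{rst}$ equal the corresponding entries of $f_{\hat{\mathcal{G}}(A,D)}(p)$ because $p$ is a feasible solution (Assumption 3). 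Hence the set of solutions to the first two lines of $(\ref{snl11})$ is precisely $f^{-1}_{\hat{\mathcal{G}}(A,D)}(f_{\hat{\mathcal{G}}(A,D)}(p))$, and the full feasible set of Problem 1 is this set intersected with the affine anchor-pinning constraint $\{x:x_i=p_i,\ i\in\mathcal{V}_a\}$.

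For the ``if'' direction, assume $(\hat{\mathcal{G}}(A,D),x)$ is globally SA-RoD rigid. Then by Definition 1 (global version) together with Lemma \ref{lem1o}, any feasible $q$ satisfies $q\in f^{-1}_{\hat{\mathcal{G}}(A,D)}(f_{\hat{\mathcal{G}}(A,D)}(p)) = f^{-1}_{\mathcal{K}(A,D)}(f_{\mathcal{K}(A,D)}(p)) = S_p$, so $q=\mathbf{1}_n\otimes\xi + c(I_n\otimes\mathcal{R}(\theta))p$ for some $\xi\in\mathbb{R}^2$, $\theta\in[0,2\pi)$, $c>0$. The anchor constraints force $q_i=p_i$ for $i\in\mathcal{V}_a$. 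Picking two distinct anchors (recall $n_a\ge 2$, and by Assumption 1 their positions differ) and subtracting their pinning equations eliminates $\xi$ and yields $c(\mathcal{R}(\theta)(p_i-p_j)) = p_i-p_j$ for two non-collocated points, which forces $c=1$ and $\mathcal{R}(\theta)=I_2$; then $\xi=0$. Hence $q=p$, i.e.\ the feasible solution is unique, so the network is SA-RoD localizable. (One should note the true configuration $p$ is itself feasible, so the feasible set is nonempty.)

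For the ``only if'' direction, I would argue the contrapositive: if $(\hat{\mathcal{G}}(A,D),x)$ is not globally SA-RoD rigid, then by Theorem \ref{shp} (equivalence of items 1 and 2) and Lemma \ref{lem1o} there exists $q\in f^{-1}_{\hat{\mathcal{G}}(A,D)}(f_{\hat{\mathcal{G}}(A,D)}(p))$ with $q\notin S_p$. Such a $q$ solves the SA and RoD constraints of $(\ref{snl11})$ but in general need not satisfy the anchor constraints, so the remaining work is to produce a genuine second feasible solution. Since the anchor positions $\{p_i:i\in\mathcal{V}_a\}$ are among the rows of both $p$ and $q$ and the full configurations are not related by a similarity, I would use the freedom of composing $q$ with a uniform rotation/translation/scaling — which preserves all SA and RoD constraints — to match the anchor data: using two distinct anchors (as above), choose $c>0,\theta,\xi$ so that the transformed $q':=\mathbf{1}_n\otimes\xi+c(I_n\otimes\mathcal{R}(\theta))q$ agrees with $p$ on those two anchors. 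Because rotations/scalings/translations act transitively enough on ordered pairs of distinct points in $\mathbb{R}^2$, such parameters exist; and since the anchor block of $p$ is itself rigid (all pairwise distances and bearings among anchors are fixed by $\hat{\mathcal{E}}$ containing the anchor clique, hence preserved under $q\mapsto$ its SA/RoD-equivalent configurations), matching two anchors forces matching of all anchors. Thus $q'$ is feasible; and $q'\ne p$ because $q\notin S_p$ implies $q'\notin S_p$, in particular $q'\ne p$. Two distinct feasible solutions contradict SA-RoD localizability.

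The main obstacle is the ``only if'' direction, specifically the last step: upgrading a non-similar SA/RoD-equivalent configuration $q$ into a second solution that also respects \emph{all} anchor pinning constraints. This requires knowing that the entire anchor sub-configuration behaves rigidly under SA/RoD equivalence — which relies on $\hat{\mathcal{E}}$ containing all anchor-anchor edges so that anchor pairwise distances and bearings are determined — and then that a single global similarity can be chosen to align it with $p$. I would handle this by first establishing that any $q$ SA/RoD-equivalent to $p$ restricts to a similarity image of the anchor block (using Lemma \ref{lem1o} applied to the anchor clique), and then invoking $n_a\ge 2$ with Assumption 1 to pin down and remove that similarity; the rest is routine.
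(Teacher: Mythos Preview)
Your proposal is correct and follows essentially the same route as the paper: the ``if'' direction is identical (global rigidity $\Rightarrow$ any feasible $q\in S_p$, then two non-collocated anchors kill the similarity parameters), and your contrapositive ``only if'' is just the paper's direct argument in reverse---both hinge on applying Lemma~\ref{lem1o} to the complete anchor subgraph, which is exactly where Assumption~3 (that $\mathcal{V}_a$ meets both $\mathcal{V}_A$ and $\mathcal{V}_D$) is used to guarantee the anchor block of any SA--RoD-equivalent configuration is a similarity image of $p_a$. One small cleanup: your phrase ``all pairwise distances and bearings among anchors are fixed'' is not literally what the constraints give you (they are SAs and RoDs), but you correctly repair this in the final paragraph via Lemma~\ref{lem1o}, which is precisely the paper's move.
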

\subsection{An edge-based analysis}
In what follows, we provide an edge-based approach to solving Problem 1. Before that, we define the cycle bearing matrix for a framework.

\begin{definition}\label{cyc_beariing}
Let a graph cycle basis matrix of an oriented graph $\mathcal{G}$ be expressed as $C=[c_{ij}]\in\mathbb{R}^{(m-n+1)\times m}$, where $c_{ij}\in\{\pm 1,0\}$. Let $B=(\ldots,b_{ij},\ldots)\in\mathbb{R}^{2\times m}$ be the bearing matrix of a framework $(\mathcal{G},p)$, where $b_{ij}=\frac{p_{j}-p_{i}}{||p_{j}-p_{i}||},(i,j)\in\mathcal{E}$.
The cycle bearing matrix $C_{b}$ for $(\mathcal{G},p)$ is defined as $C_{b}=C\odot B\in\mathbb{R}^{2(m-n+1)\times m}.$
\end{definition}

Denote the distances of all edges by $d=(d_{1},\ldots,d_{m})^{\top}\in\mathbb{R}_{+}^{m}$. Note that on each cycle $\mathcal{C}_{k}$, we have the following compatibility condition \cite{lyjbb11}:
\begin{equation}\label{comp00}
    \sum\limits_{(i,j)\in\mathcal{E}}c_{k,ij}b_{ij}d_{ij}=0,\ k=1,2\ldots,m-n+1,
\end{equation}
where $c_{k,ij}\in\{\pm 1,0\}$. 
Using the cycle bearing matrix, (\ref{comp00}) can be written as:
\begin{equation}\label{comp1}
C_{b} d=0.
\end{equation}

To solve the SNL problem, we consider the following edge-based formulation: 

\textit{Problem 2:}\footnote{Here it is assumed that $m=|\hat{\mathcal{E}}|$.}
\begin{align}\label{snl12}
\text{find}\ d_{ij},\  &b_{ij}, (i,j)\in \mathcal{\hat{E}}
\notag
\\\text{s.t.}\ \  b_{ik}-\mathcal{R}(\theta_{ijk})b_{ij}&=0,\ (i,j,k)\in\mathcal{T}_{\hat{A}},
\notag
\\ d_{rt}-\kappa_{rst}d_{rs}&=0,\ (r,s,t)\in\mathcal{T}_{\hat{D}},
\notag
\\ \sum\limits_{(i,j)\in\mathcal{C}_{k}}c_{k,ij}d_{ij}b_{ij}&=0,\ k=1, 2, 3,\ldots, m-n+1,
\notag
\\ d_{ij}=d^{\ast}_{ij},\ b_{ij}&=b^{\ast}_{ij},\ i,j\in\mathcal{V}_{a},
\notag
\\ d_{ij}>0,\ ||b_{ij}||&=1,\ (i,j)\in\mathcal{\hat{E}}.
\end{align}

 Our next result reveals the intrinsic connection between Problem 1 and Problem 2. The proof is provided in Appendix $\ref{pequv}$.

\begin{theorem}\label{equv1}
Under Assumption 3, Problem 1 is equivalent to Problem 2, in the sense that an explicit bijection can be built between the solution sets of them.
\end{theorem}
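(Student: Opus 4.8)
\textbf{Proof proposal for Theorem \ref{equv1}.}

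The plan is to construct an explicit map from solutions of Problem 1 to solutions of Problem 2, then construct its inverse, and finally verify that both maps are well-defined (i.e., they actually land in the intended solution sets) and are mutually inverse. First I would fix a reference anchor, say vertex $1\in\mathcal{V}_{a}$, so that both problems are rigidified against translations. Given a solution $x$ to Problem 1, I define the forward map by setting $d_{ij}=\|x_{i}-x_{j}\|$ and $b_{ij}=\frac{x_{i}-x_{j}}{\|x_{i}-x_{j}\|}$ for every $(i,j)\in\hat{\mathcal{E}}$; Assumption 1 guarantees $d_{ij}>0$ so the bearings are well-defined. One then checks the four families of constraints in $(\ref{snl12})$: the SA constraint $b_{ik}=\mathcal{R}(\theta_{ijk})b_{ij}$ follows directly from the SA constraint in $(\ref{snl11})$ and the definition of the signed angle; the RoD constraint $d_{rt}=\kappa_{rst}d_{rs}$ follows from the RoD constraint in $(\ref{snl11})$; the cycle compatibility condition $\sum_{(i,j)\in\mathcal{C}_k}c_{k,ij}d_{ij}b_{ij}=0$ is automatic because $d_{ij}b_{ij}=x_{i}-x_{j}$ and the signed sum of edge vectors around any cycle telescopes to zero; the anchor constraints $d_{ij}=d^{\ast}_{ij}$, $b_{ij}=b^{\ast}_{ij}$ hold because $x_{i}=p_{i}$ on $\mathcal{V}_{a}$.

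For the inverse map, given a solution $(d,b)$ to Problem 2, I reconstruct a configuration by integrating edge vectors along paths from the reference anchor: set $x_{1}=p_{1}$, and for any vertex $v$ choose a path $1=v_{0},v_{1},\dots,v_{\ell}=v$ in $\hat{\mathcal{G}}$ (which exists since $\mathcal{G}$, hence $\hat{\mathcal{G}}$, is connected) and define $x_{v}=p_{1}+\sum_{t=1}^{\ell}c_{t}\,d_{v_{t-1}v_{t}}b_{v_{t-1}v_{t}}$ with appropriate signs $c_t$. The key point here is \emph{path-independence}: two different paths from $1$ to $v$ differ by a cycle, and a cycle can be written as an integer combination of the basis cycles $\mathcal{C}_k$ of the cycle basis matrix $C$, so the cycle compatibility conditions in $(\ref{snl12})$ force the two path-sums to agree. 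Once $x$ is well-defined, one verifies that $x_{i}-x_{j}=d_{ij}b_{ij}$ for every edge (for a non-tree edge this is again a consequence of the cycle conditions), and then the SA and RoD constraints of $(\ref{snl11})$ follow from those of $(\ref{snl12})$ together with $\|x_{i}-x_{j}\|=d_{ij}$ and $\frac{x_{i}-x_{j}}{\|x_{i}-x_{j}\|}=b_{ij}$; the anchor constraints $x_{i}=p_{i}$ follow because any two anchors are joined by an edge in $\hat{\mathcal{E}}$ with prescribed $d^{\ast}_{ij},b^{\ast}_{ij}$ consistent with $p$. Finally, composing the two maps in either order returns the original object — forward-then-back recovers $x_{i}-x_{j}=d_{ij}b_{ij}$ and hence $x$ up to the already-fixed translation, and back-then-forward is immediate from $d_{ij}b_{ij}=x_{i}-x_{j}$ — so the two maps are mutually inverse bijections.

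The main obstacle I anticipate is establishing path-independence rigorously, i.e., showing that the $(m-n+1)$ cycle-basis compatibility equations in $(\ref{snl12})$ are enough to kill \emph{every} closed-walk discrepancy in $\hat{\mathcal{G}}$, not just those appearing explicitly in the basis. This requires the standard fact that the rows of the cycle basis matrix $C$ span the cycle space of $\hat{\mathcal{G}}$ over $\mathbb{R}$ (equivalently $\mathbb{Z}$), so that the difference of any two $1$-to-$v$ path vectors, viewed as an element of the cycle space, is a linear combination of the $\mathcal{C}_k$ and therefore maps to $0$ under $d\mapsto\sum c_{ij}d_{ij}b_{ij}$. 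A second, more bookkeeping-level subtlety is the consistent handling of edge orientations and the signs $c_{k,ij}\in\{\pm1,0\}$ so that $d_{ij}b_{ij}$ is genuinely the oriented edge vector $x_i-x_j$ regardless of how the incidence/cycle matrices orient that edge; once the orientation conventions are pinned down this is routine. Everything else — well-definedness of bearings (Assumption 1), connectivity of $\hat{\mathcal{G}}$, and the completion of anchor edges in $\hat{\mathcal{E}}$ (Assumption 3, used to pin the anchor block) — is either assumed or immediate.
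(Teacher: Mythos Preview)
Your proposal is correct and follows essentially the same approach as the paper. The paper defines the forward map $T(x)=[\cdots,\|x_j-x_i\|,\tfrac{(x_j-x_i)^\top}{\|x_j-x_i\|},\cdots]^\top$, proves injectivity via connectivity of $\hat{\mathcal{G}}$ plus the anchor constraint, and proves surjectivity exactly by your path-integration idea---packaged there as a ``path matrix'' $P_l$ with base vertex $l$, together with a preliminary lemma showing that any two such path matrices differ by $\Phi_l C$ for some matrix $\Phi_l$, which is precisely your path-independence argument via the cycle space.
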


Without loss of generality, let $l\in\mathcal{V}_{a}$ be the base vertex of a path matrix $P_{l}$ (see Definition $\ref{def:path}$). According to Lemma $\ref{lem:reco}$, the explicit mapping to recover the solution of Problem 1 from that of Problem 2 is 
\begin{equation}\label{eq:rec}
    x=\mathbf{1}_{n}\otimes x_{l}+P_{l}\odot B d.
\end{equation}
Comparing Problem 2 with Problem 1, we can find that the possible nonlinear constraints are caused by the compatibility constraint $(\ref{comp1})$ and the unit norm of bearings $||b_{ij}||=1$ for $(i,j)\in\hat{\mathcal{E}}$. All the other constraints in Problem 2 occur in linear forms, which will be very helpful for solving SNL in next subsection. 
\subsection{Centralized SNL via SA/RoD connectivity}\label{sub:loccon}
In this subsection, we try to solve Problem 2 in a centralized approach. In centralized SNL, a central unit collects anchor locations and measurements from all sensors to solve the localization problem.  
Problem 2 can be solved, for example, by first-order and second-order methods in classical optimization theory \cite{lyjbb15}. Interestingly, we will show that under some connectivity assumptions of $\mathcal{T}_{\hat{A}}$ and $\mathcal{T}_{\hat{D}}$, Problem 2 can be decoupled into two subproblems of finding bearings and distances, respectively. 
\subsubsection{SNL with connected $\mathcal{T}_{\hat{A}}$ over $\mathcal{\hat{G}}$}
As in \cite[Lemma 6]{lyjbb10}, all the SAs corresponding to the total triplet set $\mathcal{T}_{\mathcal{\hat{G}}}$ can be uniquely determined when the SA set $\mathcal{T}_{\hat{A}}$ is connected over $\mathcal{\hat{G}}$. By solving Subproblem 1, all bearings of edges can be derived.

\textit{Subproblem 1:}
\begin{equation}\label{snl13}
\left\{
\begin{aligned}
b_{ik}-\mathcal{R}(\theta_{ijk})b_{ij}&=0,\ (i,j,k)\in\mathcal{T}_{\hat{A}},
\\ b_{ij}&=b^{\ast}_{ij},\ i,j\in\mathcal{V}_{a},
\end{aligned}
\right.
\end{equation}
where $b_{ij}, b_{ik} ((i,j,k)\in\mathcal{T}_{\hat{A}})$  are the variables to be determined.

Then, we propose Subproblem 2 to find distances of all edges in the original framework.

\textit{Subproblem 2:}
\begin{equation}\label{snl14}
\left\{
\begin{aligned}
d_{ik}-\kappa_{ijk}d_{ij}&=0,\ (i,j,k)\in\mathcal{T}_{\hat{D}},
\\ \sum\limits_{(i,j)\in\mathcal{C}_{k}}c_{k,ij}b_{ij}d_{ij}&=0,\ k=1, 2, 3,\ldots, m-n+1,
\\ d_{ij}&=d^{\ast}_{ij},\ i,j\in\mathcal{V}_{a},
\end{aligned}
\right. 
\end{equation}
where $d_{ij}, d_{ik} ((i,j,k)\in\mathcal{T}_{\hat{D}})$  are the variables to be determined.
Denote $s=|\mathcal{T}_{\hat{D}}|+\frac{n_{a}(n_{a}-1)}{2}+2(m-n+1)$. In addition, $D_{0}\in\mathbb{R}^{[|\mathcal{T}_{D}|+\frac{n_{a}(n_{a}-1)}{2}]\times m}$ is defined as
\begin{equation}\label{CM}
    \bordermatrix{%
&\ldots	& \text{edge}\  (i,j)  & \ldots     & \text{edge}\  (i,k)  & \ldots& \text{edge}\  (s,t)  & \ldots \cr
 \ldots &\ldots & \ldots & \ldots & \ldots & \ldots& \ldots & \ldots \cr
	\kappa_{ijk}&\mathbf{0}  & -\kappa_{ijk}& \mathbf{0}&1& \mathbf{0} & 0  & \mathbf{0}\cr
 \ldots&\ldots & \ldots & \ldots & \ldots & \ldots &\ldots  & \ldots\cr
	d_{st} &\mathbf{0} & 0 & \mathbf{0} & 0& \mathbf{0}& 1 & \mathbf{0}\cr
 \ldots &\ldots & \ldots & \ldots & \ldots & \ldots& \ldots  & \ldots
    },
\end{equation}
where the rows are indexed by triplets in $\mathcal{T}_{\hat{D}}$ and distances among anchors. If we denote $C_{D}=(C_{b}^{\top},D_{0}^{\top})^{\top}\in\mathbb{R}^{s\times m}$, $y=(0,\ldots,0,d^{\ast}_{ij},\ldots)^{\top}\in\mathbb{R}^{s}$, where $C_{b}$ is the cycle bearing matrix, then Equation $(\ref{snl14})$ can be compactly written as
\begin{equation}\label{snl15}
C_{D}d=y.
\end{equation}
Now we summarize the localization approach in Algorithm $\ref{alg:loca}$. 
\begin{algorithm}
\caption{SNL with connected $\mathcal{T}_{\hat{A}}$ over $\mathcal{\hat{G}}$}
\label{alg:loca}
\begin{algorithmic}
\STATE{Find bearings of all edges by solving $(\ref{snl13})$}
\STATE{Find distances of all edges by solving $(\ref{snl15})$}
\STATE{Compute locations of unknown sensors based on $(\ref{eq:rec})$}
\end{algorithmic}
\end{algorithm}
For Algorithm $\ref{alg:loca}$,  we can establish the following algebraic criterion.
\begin{theorem}\label{snllp}
Under Assumption 3, if $\mathcal{T}_{\hat{A}}$ is connected over $\mathcal{\hat{G}}$, then \\
1) $(\mathcal{\hat{G}}(A,D),x,\mathcal{V}_{a})$ is localizable if and only if $\rank(C_{D})=m$;\\
2) Algorithm \ref{alg:loca} solves Problem 1 for sensor network $(\mathcal{\hat{G}}(A,D),x,\mathcal{V}_{a})$ under the condition $\rank(C_{D})=m$. 
\end{theorem}
\begin{proof}
1) Since $\mathcal{T}_{\hat{A}}$ is connected over $\mathcal{\hat{G}}$, all bearings of edges can be uniquely determined by $(\ref{snl13})$. According to Theorem \ref{equv1}, $(\mathcal{\hat{G}}(A,D),x,\mathcal{V}_{a})$ is localizable if and only if there exists a unique solution to Problem 2. Equivalently, there exists a unique solution to $(\ref{snl15})$. Based on Assumption 3, there exists a feasible solution to $(\ref{snl15})$. Therefore, the uniqueness of the solution to $(\ref{snl15})$ is equivalent to  $\rank(C_{D})=m$.\\
2) This conclusion can be obtained as a consequence of 1).
\end{proof}

\subsubsection{SNL with connected $\mathcal{T}_{\hat{D}}$ over $\mathcal{\hat{G}}$}

Similarly, if $\mathcal{T}_{\hat{D}}$ is connected over $\mathcal{\hat{G}}$, distances of all edges can be uniquely determined by solving Subproblem 3.

\textit{Subproblem 3:}
\begin{equation}\label{snl16}
\left\{
   \begin{aligned}
d_{ik}-\kappa_{ijk}d_{ij}&=0,\ (i,j,k)\in\mathcal{T}_{\hat{D}},
\\ d_{ij}&=d^{\ast}_{ij},\ i,j\in\mathcal{V}_{a},
\end{aligned} 
\right.
\end{equation}
where $d_{ij}, d_{ik} ((i,j,k)\in\mathcal{T}_{\hat{D}})$  are the variables to be determined.

Then, finding bearings of all edges can be achieved by solving Subproblem 4.

\textit{Subproblem 4:}
\begin{equation}\label{snl17}
\left\{
  \begin{aligned}
\ \ \sum\limits_{(i,j)\in\mathcal{C}_{k}}c_{k,ij}b_{ij}d_{ij}&=0,\ k=1, 2, 3,\ldots, m-n+1,
\\ b_{ik}-\mathcal{R}(\theta_{ijk})b_{ij}&=0,\ (i,j,k)\in\mathcal{T}_{\hat{A}},
\\ b_{ij}&=b^{\ast}_{ij},\ i,j\in\mathcal{V}_{a},
\\ ||b_{ij}||&=1,\ (i,j)\in\mathcal{\hat{E}},
\end{aligned}
\right.
\end{equation}
where $b_{ij}, b_{ik} ((i,j,k)\in\mathcal{T}_{\hat{A}})$  are the variables to be determined.
Denote $C_{B,1}=[C\odot(\ldots,d_{ij},\ldots)]\otimes I_{2}$, $t=2(m-n+1)+2|\mathcal{T}_{\hat{A}}|+n_{a}(n_{a}-1)$, $C_{B}=(C_{B,1}^{\top},C_{B,2}^{\top})^{\top}\in\mathbb{R}^{t\times2m}$, $b=(\ldots,b^{\top}_{ij},\ldots)^{\top}\in\mathbb{R}^{2m}$, $z=(0,\ldots,0,\ldots,b^{\ast \top}_{ij},\ldots)^{\top}\in\mathbb{R}^{t}$, where  $C_{B,2}$ is defined by 
\begin{equation}\label{CMn}
    \bordermatrix{%
&\ldots	& \text{edge}\  (i,j)  & \ldots     & \text{edge}\  (i,k)  & \ldots& \text{edge}\  (s,t)  & \ldots \cr
 \ldots &\ldots & \ldots & \ldots & \ldots & \ldots& \ldots & \ldots \cr
	\theta_{ijk}&\mathbf{0}  & -\mathcal{R}(\theta_{ijk})& \mathbf{0}&I_{2}& \mathbf{0} & 0_{2\times2}  & \mathbf{0}\cr
 \ldots&\ldots & \ldots & \ldots & \ldots & \ldots &\ldots  & \ldots\cr
	b_{st} &\mathbf{0} & 0_{2\times2} & \mathbf{0} & 0_{2\times2} & \mathbf{0}& I_{2} & \mathbf{0}\cr
 \ldots &\ldots & \ldots & \ldots & \ldots & \ldots& \ldots  & \ldots
    },
\end{equation}
where the rows are indexed by triplets in $\mathcal{T}_{\hat{A}}$ and bearings among anchors.
Thus, a compact form of $(\ref{snl17})$ is 
\begin{equation}\label{snl18}
  \left\{  
  \begin{aligned}
C_{B}b&=z,
\\ ||b(2k-1:2k)||&=1,\ k=1,2,\ldots,m.
\end{aligned}
\right.
\end{equation}
From Assumption 3, we know that there exists at least one solution to $(\ref{snl17})$. First, using Penrose-Moore inverse, all solutions of $C_{B}b=z$ can be represented as 
\begin{equation}\label{eq:b1}
    b=C^{\dagger}_{B}z+(\tilde{b}_{1},\ldots,\tilde{b}_{L})w=C^{\dagger}_{B}z+\sum_{i=1}^{L}w_{i}\tilde{b}_{i},
\end{equation}
where $L=\dim(\nulll(C_{B}) )$, $\nulll( C_{B})=\spann\{\tilde{b}_{i}\}^{L}_{i=1}$, and $w=(w_{1},w_{2},\ldots,w_{L})^{\top}\in\mathbb{R}^{L}$ is the variable to be determined. Let $N_{B}=(\tilde{b}_{1},\ldots,\tilde{b}_{L})\in\mathbb{R}^{m\times L}$, then $(\ref{eq:b1})$ can be rewritten as \begin{equation}\label{snl19}
    b=C^{\dagger}_{B}z+N_{B}w.
\end{equation} Denote $E_{k}=\diag(\mathbf{0},1,1,\mathbf{0})\in\mathbb{R}^{2m\times 2m}$, where $1$ occurs at the $(2k-1)$-th and $2k$-th locations. So the norm constraints $||b(2k-1:2k)||=1$ can be expressed as 
\begin{align}\label{snl20}
(C^{\dagger}_{B}z+N_{B}w)^{\top}E_{k}(C^{\dagger}_{B}z+N_{B}w)=1,\ k=1,2,\ldots,m.
\end{align}
Denote $H(w)=\sum\limits_{k=1}^{m}[(C^{\dagger}_{B}z+N_{B}w)^{\top}E_{k}(C^{\dagger}_{B}z+N_{B}w)-1]^{2}$, $(\ref{snl20})$ can be equivalently transformed into:
\begin{equation}\label{snl21}
H(w)=0,\ w\in\mathbb{R}^{L}.
\end{equation} 
 Despite the difficulty caused by the non-convexity of $H(\cdot)$, this problem has been simplified to an equation on the null space. 
For Problem 2, bearings and distances are determined by solving $(\ref{snl16})$ and $(\ref{snl21})$. 
In this case, the localization approach is summarized into Algorithm $\ref{alg:locd}$.

\begin{algorithm}
\caption{SNL with connected $\mathcal{T}_{\hat{D}}$ over $\mathcal{\hat{G}}$}
\label{alg:locd}
\begin{algorithmic}
\STATE{Find distances of all edges by solving $(\ref{snl16})$}
\STATE{Find bearings of all edges by solving $(\ref{snl19})$ and $(\ref{snl21})$}
\STATE{Compute locations of unknown sensors based on $(\ref{eq:rec})$}
\end{algorithmic}
\end{algorithm}

We note that solving $(\ref{snl21})$ for large-scale networks remains a time-consuming task. Fortunately, for some frameworks such as those constructed by Type $(D_{1},A_{1})$ bilaterion orderings (see Figure $\ref{lf117}(a)$), a concise description of $\nulll(C_{B})$ can be derived to remove the quadratic constraints $||b(2k-1:2k)||=1$ in $(\ref{snl18})$. We will show that under this setting, the matrix $C_{B}\in\mathbb{R}^{t\times (4n-6)}$ is of full column rank.
 Now we present the following theorem. The proof can be found in Appendix $\ref{app:ful}$. 
\begin{theorem}\label{pr1}
Under Assumption 3, suppose that $(\mathcal{\hat{G}}(A,D),x)$ is generated by a Type $(D_{1},A_{1})$ bilateration ordering and $\mathcal{V}_{a}=\{1,2\}$, then\\ 
1) $\rank(C_{B})=4n-6$, and bearings of all edges are determined by $b=C^{\dagger}_{B}z$;\\
2) Algorithm \ref{alg:locd} solves Problem 1 for sensor network $(\mathcal{\hat{G}}(A,D),x,\mathcal{V}_{a})$.
\end{theorem}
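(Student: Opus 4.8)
The plan is to reduce the statement to an elementary fact about consistent, full‑column‑rank linear systems, with all the genuine work delegated to Proposition~\ref{pr1}. First I would pin down the dimensions. A Type $(D_1,A_1)$ bilateration ordering starts from the single edge $(1,2)$ and, at each of the remaining $n-2$ steps, appends one vertex together with two edges; hence the underlying graph $\hat{\mathcal{G}}$ has $m=2n-3$ edges and the bearing vector $b=[\cdots,b_{ij}^{\top},\cdots]^{\top}$ lies in $\mathbb{R}^{2m}=\mathbb{R}^{4n-6}$. In particular the matrix $C_{B}\in\mathbb{R}^{t\times(4n-6)}$ appearing in~(\ref{snl18}) has exactly $4n-6$ columns, which matches the rank asserted in Proposition~\ref{pr1}.

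Next I would establish consistency of the linear part of Sub-problem~4. By Assumption~3 Problem~1 is feasible, so by Theorem~\ref{equv1} Problem~2 is feasible as well; restricting to the bearing variables, the true bearing vector $b^{\ast}$ of the underlying framework $(\hat{\mathcal{G}}(A,D),x)$ satisfies the SA constraints, the anchor-bearing constraints, and the cycle-compatibility conditions, i.e. $C_{B}b^{\ast}=z$. Thus the system $C_{B}b=z$ is consistent. Invoking Proposition~\ref{pr1}, $\rank(C_{B})=4n-6$, so $C_{B}$ has full column rank; consequently $\nulll(C_{B})=\{0\}$, the number $L$ in~(\ref{snl19}) is zero, and $C_{B}^{\dagger}C_{B}=I_{4n-6}$.

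The conclusion then follows in one line. For any solution $b$ of $C_{B}b=z$ we have $b=C_{B}^{\dagger}C_{B}b=C_{B}^{\dagger}z$, so the solution is unique; since $b^{\ast}$ already has unit norm on each $2$-block, the quadratic constraints $\|b(2k-1:2k)\|=1$ in~(\ref{snl18}) are automatically satisfied and hence redundant. Applying the identity to $b^{\ast}$ yields $b=C_{B}^{\dagger}z$ for the sensor network $(\hat{\mathcal{G}}(A,D),x,\mathcal{V}_{a})$, as claimed. (Alternatively one may note that by Theorem~\ref{order1} and the corollary to Theorem~\ref{equv} the network is SA-RoD localizable, so a unique bearing vector exists a priori; but full column rank of $C_{B}$ already gives this directly.)

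I do not expect any real obstacle in this theorem itself: once Proposition~\ref{pr1} is granted, the argument is pure linear algebra. The substantive difficulty lives entirely in Proposition~\ref{pr1} (Appendix~\ref{app:ful}), where full column rank of $C_{B}$ must be extracted from the recursive structure of the ordering — each Type $D_{1}$ step determines the two distances incident to the new vertex from the two already-known incident edge directions, and each Type $A_{1}$ step determines the two incident bearings — so that no nonzero vector in the bearing space can simultaneously annihilate all SA rows, all anchor-bearing rows, and all cycle-compatibility rows. The present theorem merely translates "full column rank plus consistency" into "the Moore--Penrose pseudoinverse returns the unique solution."
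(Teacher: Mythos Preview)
Your proposal is correct and matches the paper's approach exactly: the paper derives $\nulll(C_{B})=0$ as an immediate consequence of Proposition~\ref{pr1} and then states the theorem \emph{without proof}, precisely because the remaining step is the one-line linear-algebra fact you spell out (full column rank plus consistency implies $b=C_{B}^{\dagger}z$). Your edge count $m=2n-3$ is also right here, since with $\mathcal{V}_{a}=\{1,2\}$ the single anchor edge $(1,2)$ is already the initial edge of the bilateration ordering, so $\hat{\mathcal{G}}=\mathcal{G}$.
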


\subsection{Distributed SNL}
\label{dis_SNL}
In this subsection, we propose a distributed algorithm for the SNL problem. Different from the centralized approach, each sensor only needs to estimate its own location by leveraging local information from its neighbor sensors. Note that in previous discussion, SNL with connected $\mathcal{T}_{\hat{D}}$ (or disconnected $\mathcal{T}_{\hat{A}}$ and $\mathcal{T}_{\hat{D}}$) over $\mathcal{\hat{G}}$ can be transformed into nonconvex optimization problems, where multiple local minimizers or saddle points may occur. Therefore, designing distributed SNL algorithms with global convergence for general cases remains a challenging task. Here we consider the scenario with connected $\mathcal{T}_{\hat{A}}$ over $\mathcal{\hat{G}}$.

\textit{Assumption 4:} The sensor network $(\mathcal{G}(A,D),x,\mathcal{V}_{a})$ satisfies: (i)  the SA index set $\mathcal{T}_A$ is connected over graph $\mathcal{G}$; (ii) $(\mathcal{G}(A,D),x,\mathcal{V}_{a})$ is SA-RoD localizable; (iii) there exist two anchors $\{i,j\}\subseteq\mathcal{V}_a$ satisfying $(i,j)\in\mathcal{E}$.

To achieve distributed SNL, we propose a two-stage SNL strategy. In the first stage, since $\mathcal{T}_{A}$ is connected over $\mathcal{G}$, the bearings of all edges can be estimated as in \cite{lyjbb10}. Specifically, for each node $i$ and $j\in\mathcal{N}_i$, $\hat{b}_{ij}(t)$ can be updated by
\begin{align}
\dot{\hat{b}}_{ij}= -&\Big[\sum_{(i,k,j)\in\mathcal{T}_{\mathcal{A}}}\Big(\hat{b}_{ij}-\mathcal{R}(\theta_{ikj})\hat{b}_{ik}\Big)+\sum_{(i,j,k)\in\mathcal{T}_{\mathcal{A}}}\Big(\hat{b}_{ij}-\mathcal{R}^\top(\theta_{ijk})\hat{b}_{ik}(t)\Big)
\notag
\\&+\sum_{(j,k,i)\in\mathcal{T}_{\mathcal{A}}}\Big(\hat{b}_{ij}+\mathcal{R}(\theta_{jki})\hat{b}_{jk}\Big)+\sum_{(j,i,k)\in\mathcal{T}_{\mathcal{A}}}\Big(\hat{b}_{ij}+\mathcal{R}^\top(\theta_{jik})\hat{b}_{jk}\Big)\Big],
\label{eq:estiSA}
\end{align}
where $\theta_{jki},\ \theta_{jik}$, and $\hat{b}_{jk}$ are obtained by node $i$ through communications with node $j\in\mathcal{N}_i$. In fact, Equation (\ref{eq:estiSA}) can be viewed as a decentralized approach to solving Subproblem 1. Furthermore, it has been shown that $\hat{b}_{ij}(t)$ exponentially converges to $b_{ij}$ under any initial estimate $\hat{b}_{ij}(0)$ \cite{lyjbb10}. 

In the second stage, we concentrate on estimates of distances and locations. Denote 
\begin{equation}
V(\hat{x},\hat{d})=\frac{1}{2}\Big(\sum_{(i,j)\in\mathcal{E}}||\hat{x}_j-\hat{x}_i-\hat{d}_{ij}b_{ij}||^2+\sum_{(i,j,k)\in\mathcal{T}_{D}}(\hat{d}_{ik}-\kappa_{ijk}\hat{d}_{ij})^2\Big),
\label{eq:defcv}
\end{equation}
where $\{b_{ij}\}_{(i,j)\in\mathcal{E}}$ corresponds to the equilibrium of system (\ref{eq:estiSA}), $\hat{d}_{ij}>0$ for each $(i,j)\in\mathcal{E}$, $\hat{x}_i=x_i$ for each $i\in\mathcal{V}_a$, and $\hat{d}_{ij}=d_{ij}$ for $i,j\in\mathcal{V}_a$. Note that $V(\hat{x},\hat{d})$ is a convex function in $\mathbb{R}^{2n}\times\mathbb{R}^{2m}_{+}$. Observe that $V(x,d)=0$, i.e., $(x,d)$ is a global minimizer of $V$. Based on the localizability of $(\mathcal{G}(A,D),x,\mathcal{V}_{a})$ and the convexity of $V$, we can naturally derive the following lemma.
\begin{lemma}\label{lem:conuni}
 Under Assumption 4, $(x,d)$ is the unique global minimizer of $V$ in $\mathbb{R}^{2n}\times\mathbb{R}^{2m}_{+}$.
\end{lemma}

Now we can jointly estimate the distances and locations using the gradient flow of $V$, i.e.,
\begin{align}\label{eq:estiRoDLoc}
\dot{\hat{x}}_i&=-\sum_{j\in\mathcal{N}_i}(\hat{x}_i-\hat{x}_j+\hat{d}_{ij}b_{ij}),
\notag
\\ \dot{\hat{d}}_{ij}&=-\hat{d}_{ij}+b_{ij}^\top(\hat{x}_j-\hat{x}_i)+\sum_{(i,j,k)\in\mathcal{T}_D}\kappa_{ijk}(\hat{d}_{ik}-\kappa_{ijk}\hat{d}_{ij})-\sum_{(i,k,j)\in\mathcal{T}_D}(\hat{d}_{ij}-\kappa_{ikj}\hat{d}_{ik})
\notag
\\ &\ \ \ +\sum_{(j,i,k)\in\mathcal{T}_D}\kappa_{jik}(\hat{d}_{jk}-\kappa_{ijk}\hat{d}_{ij})-\sum_{(j,k,i)\in\mathcal{T}_D}(\hat{d}_{ij}-\kappa_{jki}\hat{d}_{jk}), (i,j)\in\mathcal{E},
\end{align}
where $b_{ij}$ is the true bearing of edge $(i,j)$, $\kappa_{jik},\kappa_{jik}$, and $\hat{d}_{jk}$ can be obtained through communications with node $j\in\mathcal{N}_i$. Note that if $i,j\in\mathcal{V}_A$, $\hat{d}_{ij}$ is updated according to
$\dot{\hat{d}}_{ij}=-\hat{d}_{ij}+b^\top_{ij}(\hat{x}_j-\hat{x}_i).$
 
 Thanks to the convexity of $V$, all equilibria of system (\ref{eq:estiRoDLoc}) are global minimizers of $V$. From Lemma $\ref{lem:conuni}$, we know system (\ref{eq:estiRoDLoc}) has a unique equilibrium. Therefore, we can establish the following global convergence theorem.
\begin{theorem}[Global convergence property]
Under Assumption 4,  by implementing protocol (\ref{eq:estiSA}) in the first stage and (\ref{eq:estiRoDLoc}) in the second stage, $\hat{b}_{ij}, \hat{d}_{ij}, \hat{x}_{i}$ exponentially converge to $b_{ij}, d_{ij},x_i$, respectively, for all the free nodes $i\in\mathcal{V}_{f}$ under any initial estimates $\hat{x}_i(0), \hat{b}_{ij}(0), \hat{d}_{ij}(0)>0$. 
\label{con_dis_SNL}
\end{theorem}
\begin{proof}
In the first stage, the global exponential stability (GES) of (\ref{eq:estiSA}) at $\{b_{ij}\}_{(i,j)\in\mathcal{E}}$ has been proved in \cite{lyjbb10}. In the second stage, for (\ref{eq:estiRoDLoc}), Lemma $\ref{lem:conuni}$ implies that $V(\hat{x},\hat{d})>0$ in $\mathbb{R}^{2n}\times\mathbb{R}_{+}^{2m}\setminus\{(x,d)\}$ and $V(x,d)=0$. Furthermore, $\dot{V}(\hat{x},\hat{d})=-||\nabla V(\hat{x},\hat{d})||^2\leq 0$. From the convexity of $V$, $||\nabla V(\hat{x},\hat{d})||=0$ if and only if $(\hat{x},\hat{d})$ is a global minimizer, i.e., $(\hat{x},\hat{d})=(x,d)$. That is, $\dot{V}(\hat{x},\hat{d})<0$ in $\mathbb{R}^{2n}\times\mathbb{R}_{+}^{2m}\setminus\{(x,d)\}$ and $\dot{V}(x,d)=0$. By Lyapunov's stability theorem \cite[Theorem 4.1]{2002nonlinear}, $(x,d)$ is an asymptotically stable equilibrium for (\ref{eq:estiRoDLoc}). Since system (\ref{eq:estiRoDLoc}) is linear and has a unique equilibrium $(x,d)$,  system (\ref{eq:estiRoDLoc}) is GES at $(x,d)$.
\end{proof}
%
\section{Numerical simulations}
In this section, we provide some numerical examples to validate the efficacy of the proposed SNL method. 
\subsection{Simulations for centralized SNL}
In this subsection, the nodes of sensor networks in all examples are supposed to lie in the unit box $[0,1]^{2}$ randomly.
 Furthermore, the number of nodes is 70 and there exist 2 anchors in each network. 


\textit{Example 1:} Figure $\ref{lfn1602}(a)$ shows a quadrilateralized sensor network generated via 2-vertex additions, which has 103 edges and is SA connected over the underlying graph. Computational results indicate that $\rank(C_{D})=103$. By implementing Algorithm $\ref{alg:loca}$, the estimated positions of all free nodes match their true locations closely, as illustrated in Figure $\ref{lfn1602}(b)$. In this context, all governing equations are linear, and each linear equation  $\mathcal{M}x=y$ ($x$ and $y$ are vectors, and $\mathcal{M}$ is a matrix of suitable size) is solved with the MATLAB command "$\mathcal{M}\backslash y$". Here the mean squared error between the estimated locations and true locations is calculated as $8.1398e{-15}$. 

\textit{Example 2:} The sensor network in Figure $\ref{lfn1602}(c)$ is generated by a Type $(D_{1},A_{1})$ bilateration ordering, which is RoD connected over the underlying graph. Since the underlying graph is a Laman graph, the number of edges is $m=2n-3=137$. First, after computation, we find that $\rank(C_{B})=4n-6=274$ and derive a unique solution to $(\ref{snl16})$, which is consistent with Theorem $\ref{pr1}$. Then we estimate the locations of unknown sensors using Algorithm $\ref{alg:locd}$. Similarly, this process only involves linear equation solvers. As shown in Figure $\ref{lfn1602}(d)$, the estimated locations also match the true locations of nodes closely. In this case, the mean squared error is $7.2709e-14$. 
\begin{figure}[!h]
\centerline{\includegraphics[width=\columnwidth]{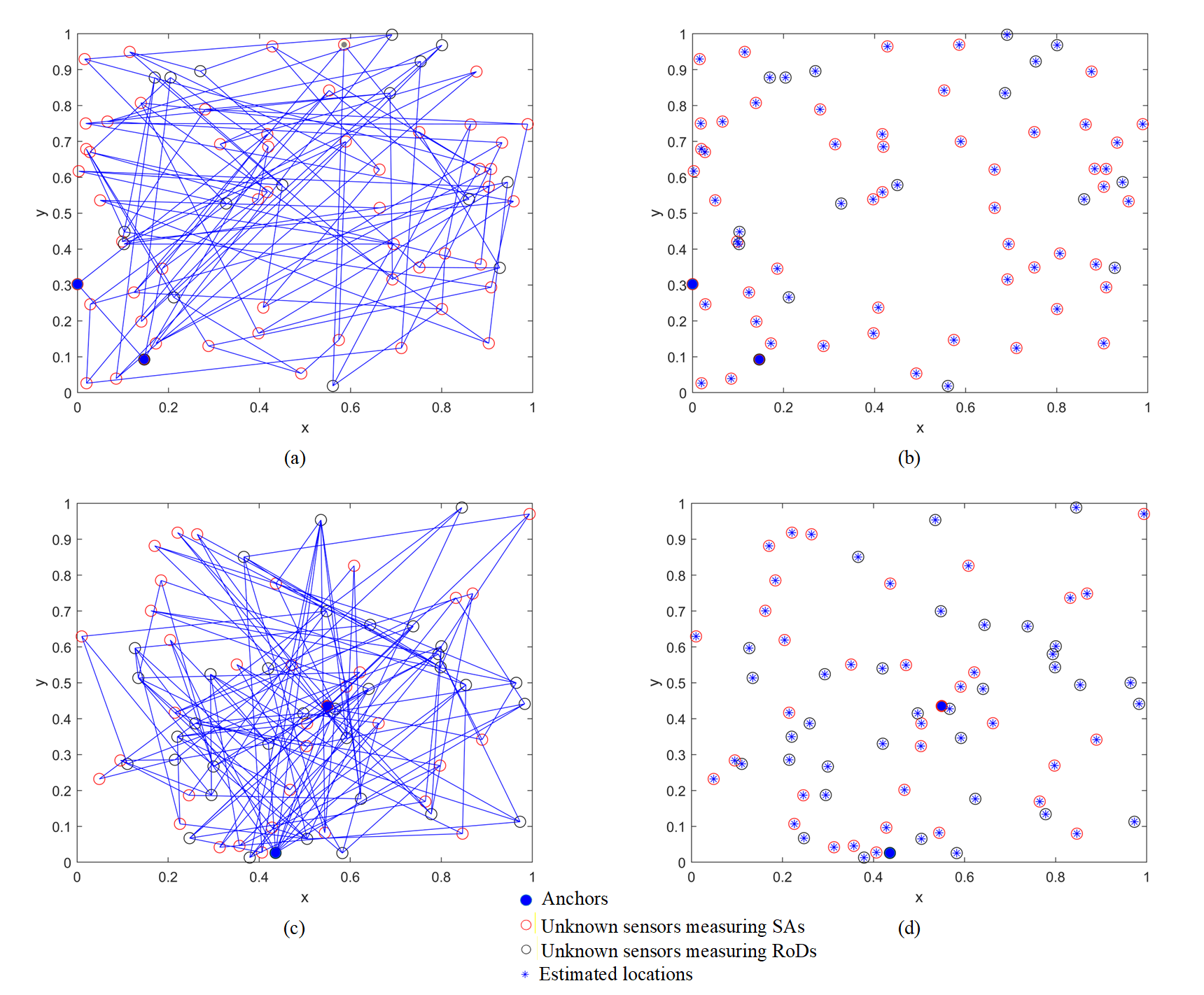}}
\caption{(a) A quadrilateralized network that is SA connected and constructed by 2-vertex additions and satisfies $\rank(C_{D}) =103$. (b) Localization results. (c) A RoD connected sensor network constructed by a Type $(D_{1}, A_{1})$ bilateration ordering. (d)  Localization results. }
\label{lfn1602}
 \end{figure}



\subsection{Simulations for distributed SNL} 
Consider a sensor network of seven nodes, which satisfies Assumption 4 as shown in Figures \ref{lfdis}(a). Note that in Figure \ref{lfdis}(a), the anchor set $\mathcal{V}_a=\{1,2\}$, and the bipartition is determined by $\mathcal{V}_A=\{1,3,4,6,7\},\ \mathcal{V}_D=\{2,5\}$. By implementing (\ref{eq:estiSA}) in the first stage and (\ref{eq:estiRoDLoc}) in the second stage, the evolutions of bearing, distance, and location estimation errors are illustrated, respectively, in Figure \ref{lfdis}(b). We can observe that the estimated bearings, distances, and positions asymptotically converge to the corresponding true values, respectively. This is consistent with Theorem \ref{con_dis_SNL}.
 \begin{figure}[!htbp]
 \centerline{\includegraphics[width=\columnwidth]{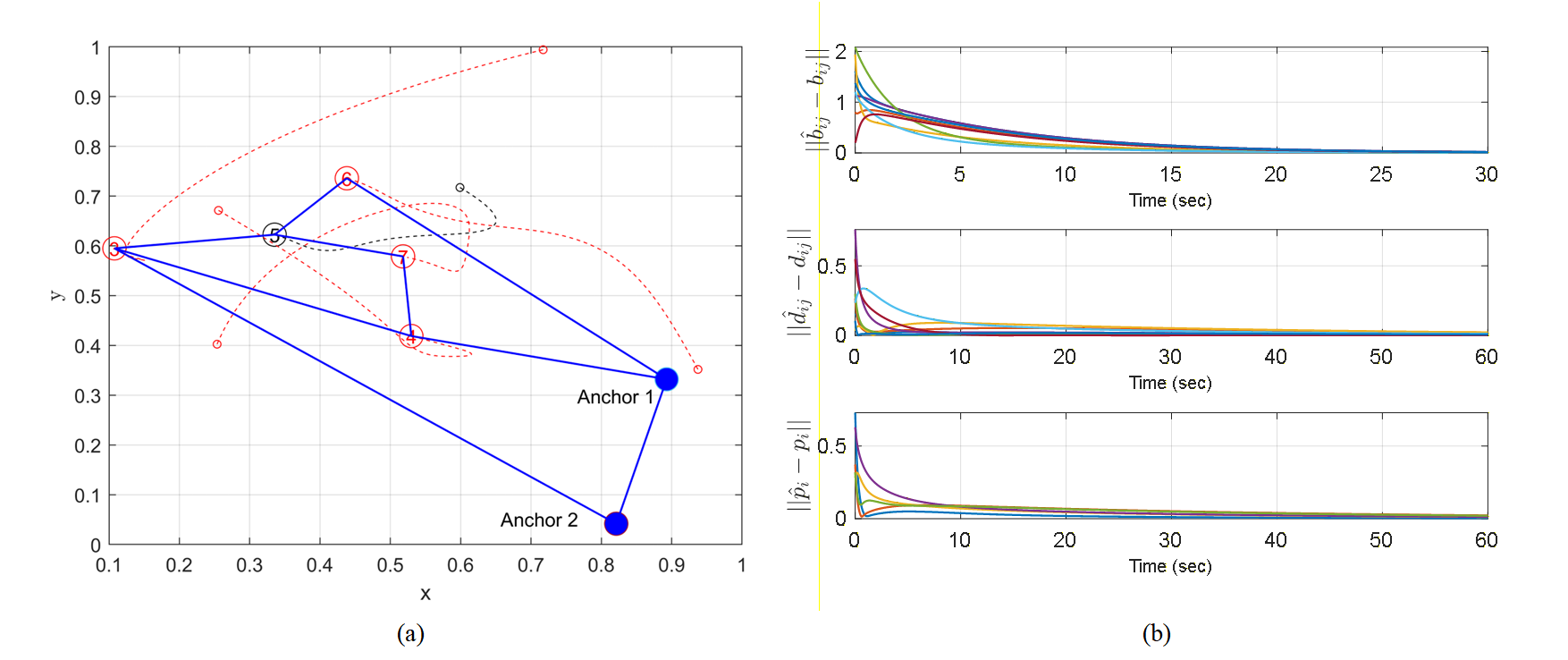}}
 \caption{ (a) Trajectories of estimated positions for five free nodes, $\mathcal{V}_a=\{1,2\}$, $\mathcal{V}_A=\{1,3,4,6,7\}$. (b) Evolution of bearings, distances, and locations estimation errors. The blue solid dots represent anchors. Free nodes sensing SA and RoD are denoted by hollow red dots and hollow black dots, respectively. The smaller hollow dots represent initial estimates.}
 \label{lfdis}
 \end{figure}
\section{Conclusions and future directions}
This work established the SA-RoD rigidity theory for frameworks with bipartition on the vertex set. It has been shown that a duality theorem holds for infinitesimally SA-RoD rigid frameworks with respect to bipartition. Furthermore, if the bipartition-induced triplet set $\mathcal{T}_{A}$ or $\mathcal{T}_{D}$ is connected over the underlying graph $\mathcal{G}$, the framework $(\mathcal{G}(A,D),p)$ can inherit the SA or RoD rigidity property of ($\mathcal{G},p)$. We have also proposed an approach called SA-RoD ordering for construction of globally SA-RoD rigid frameworks with specific bipartions. The SA-RoD rigidity theory was applied to the problem of SNL with heterogeneous nodes. By using an edge-based approach, we found that the connectivity of $\mathcal{T}_{\hat{A}}$ or $\mathcal{T}_{\hat{D}}$ over $\mathcal{\hat{G}}$ can help to decouple the centralized SNL problem into two simple subproblems. A distributed SNL protocol was also proposed for sensor networks with connected SA index sets. 

Meaningful future work may include: 1) extending the proposed rigidity theory to frameworks in $\mathbb{R}^{3}$, 2) investigating SNL under noisy measurements, 3) finding an appropriate bipartition strategy to achieve faster convergence of the SNL algorithms.

\appendix
\begingroup
\fontsize{15.8pt}{20pt}\selectfont
\noindent\textbf{6\ \ \ \ Appendix}
 \par\endgroup

\section{Preliminaries}
\label{pre}
In this section, we will briefly review some mathematical tools in algebraic graph theory and results in graph rigidity theory.
\subsection {Algebraic graph theory}
Given a graph $\mathcal{G}=(\mathcal{V},\mathcal{E})$, a {\it path} of length $l$ is a sequence of $l + 1$ distinct vertices such that any two consecutive vertices are adjacent.  A {\it cycle} $\mathcal{C}$ is a connected graph where each vertex has exactly two neighbors. A graph with no cycles is called {\it acyclic}. A {\it tree} refers to a connected acyclic graph. A {\it spanning tree} is an acyclic spanning subgraph.
An {\it orientation} of a graph $\mathcal{G}$ is the assignment of a direction to each edge. Equivalently, it is a function $\sigma$ of all arcs such that if $(i,j)$ is an arc, then $\sigma(i,j)=-\sigma(j,i)$. A graph together with a specific orientation is called an {\it oriented graph}. For an oriented graph $\mathcal{G}$, $H=[h_{ij}]\in \mathbb{R}^{m\times n}$ represents the {\it incidence matrix} with rows and columns indexed by edges and vertices of $\mathcal{G}$. $h_{ij}=1$ if the $i$-th edge sinks at vertex $j$, $h_{ij}=-1$ if the $i$-th edge leaves vertex $j$, and $h_{ij}=0$ otherwise. 

\begin{definition}[Cut space and flow space\cite{lyjb1}] For an oriented graph $\mathcal{G}$, the cut space is a subspace of $\mathbb{R}^{m}$ corresponding to the column space of the incidence matrix $H$, and the flow/cycle space is the orthogonal complement of the cut space.
\end{definition}

For a cycle $\mathcal{C}_{l}$ of length $l$ in the oriented graph $\mathcal{G}$, one can define the {\it oriented cycle} with a signed characteristic vector $z=(z_{1},z_{2},\ldots,z_{m})^{\top}\in \mathbb{R}^{m}$, where $z_{i}=1$ if $i$ is consistent with the orientation, $z_{i}=-1$ if $i$ is opposite to the orientation, and $z_{i}=0$ otherwise.
\begin{lemma}[Dimension and base of flow space\cite{lyjb1}]  Given a connected graph $\mathcal{G}$ with $n$ vertices and $m$ edges, its flow space has dimension $m - n + 1$. Furthermore, this space is spanned by the signed characteristic vectors of its cycles. 
\label{lem1}
\end{lemma}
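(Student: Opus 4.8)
The plan is to prove the two assertions separately, using a spanning tree as the bridge between the dimension count and the spanning statement. First I would pin down $\rank(H)$. If $Hx=0$, then the row of $H$ associated with each edge $(i,j)$ forces $x_i=x_j$, so $x$ is constant on the single connected component, i.e. $\nulll(H)=\spann\{\mathbf{1}_n\}$ and hence $\rank(H)=n-1$. Since the cut space is by definition the column space of $H$, it has dimension $n-1$; the flow space, being its orthogonal complement in $\mathbb{R}^m$, therefore has dimension $m-(n-1)=m-n+1$. This settles the first claim.

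Next I would check that the signed characteristic vector $z$ of any cycle $\mathcal{C}$ lies in the flow space, i.e. that $z$ is orthogonal to every column of $H$, equivalently $H^\top z=0$. Reading off the $v$-th coordinate of $H^\top z$ as the signed sum over edges incident to $v$, the only nonzero contributions come from the (at most two) cycle edges at $v$; since traversing $\mathcal{C}$ consistently visits $v$ with exactly one incoming and one outgoing edge, the two contributions carry opposite signs and cancel. Hence $z$ belongs to the flow space, and so does every linear combination of such vectors.

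Finally, for the spanning statement I would fix a spanning tree $T$ of $\mathcal{G}$, which has $n-1$ edges and leaves $m-n+1$ co-tree edges. For each co-tree edge $e$, adjoining $e$ to $T$ creates a unique (fundamental) cycle $\mathcal{C}_e$; let $z_e$ be its signed characteristic vector, oriented so the $e$-coordinate equals $1$. By the previous step each $z_e$ is in the flow space, and $z_e$ vanishes on every co-tree coordinate other than $e$ because $\mathcal{C}_e$ contains no co-tree edge besides $e$. Restricting the family $\{z_e\}$ to the co-tree coordinates thus yields an identity matrix, so the $z_e$ are linearly independent; as there are exactly $m-n+1=\dim(\text{flow space})$ of them, they form a basis. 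In particular the flow space is spanned by signed characteristic vectors of cycles (the fundamental cycles being a special case), completing the proof.

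I expect the routine dimension count and the cancellation argument to be straightforward; the only points requiring care are the orientation bookkeeping in the identity $H^\top z=0$ and the clean statement of the ``restriction to co-tree coordinates'' step that delivers linear independence. An equivalent, orientation-free alternative for the last paragraph is to take an arbitrary flow $x$, subtract $\sum_{e}x_e z_e$ to obtain a flow supported only on tree edges, and then argue by peeling off leaves of the induced forest that such a flow must be zero; I would keep this as a backup in case the basis-count phrasing feels too terse.
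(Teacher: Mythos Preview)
Your proposal is correct and follows the standard textbook argument for this classical fact in algebraic graph theory. Note, however, that the paper does not actually prove this lemma: it is stated as a cited result from reference \cite{lyjb1} and no proof appears in the paper, so there is nothing to compare against beyond observing that your approach is the conventional one.
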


It should be noted that these independent cycles can be obtained by adding chords to a maximal spanning tree of $\mathcal{G}$. A matrix with rows corresponding to $m-n+1$ independent signed characteristic vectors of cycles is called a {\it cycle basis matrix}.
\subsection{ Graph rigidity theory}\label{deg:grt}

For a graph $\mathcal{G}$, if every vertex $i$ is assigned with a coordinate $p_{i}\in \mathbb{R}^{d}$, then $p={{(p_{1}^{\top},p_{2}^{\top},\ldots, p_{n}^{\top})}^{\top}}\in\mathbb{R}^{nd}$ is called a {\it configuration} of graph $\mathcal{G}$, and $(\mathcal{G},p)$ is called a {\it framework}. A configuration $p\in\mathbb{R}^{nd}$ is said to be {\it non-degenerate} if $p_{1},p_{2},\ldots, p_{n}$ do not lie in a common hyperplane of $\mathbb{R}^{d}$, and $(\mathcal{G},p)$ is called a {\it non-degenerate framework}. 

In different types of rigidity theory, the {\it rigidity function} $f_{\mathcal{G}}:\mathbb{R}^{nd}\to\mathbb{R}^{q}$ is commonly defined. For example, the distance rigidity function is 
\begin{equation}
{{f}_{\mathcal{G}}}(p)\triangleq{{(\ldots ,d_{ij}^{2},\ldots )}^{\top}},\text{ }(i,j)\in \mathcal{E},
\end{equation}
 where $d_{ij}=||{{p}_{i}}-{{p}_{j}}||$. We say a framework $(\mathcal{G},p)$ is {\it rigid} if $f_{\mathcal{G}}^{-1}(f_{\mathcal{G}}(p))\cap{{{U}_{p}}}=f_{\mathcal{K}}^{-1}({{f}_{\mathcal{K}}}(p))\cap{{{U}_{p}}}$ for some neighborhood ${{U}_{p}}\subseteq\mathbb{R}^{nd}$ of $p$. If $f_{\mathcal{G}}^{-1}(f_{\mathcal{G}}(p))=f_{\mathcal{K}}^{-1}({f_{\mathcal{K}}}(p))$, $(\mathcal{G},p)$ is said to be {\it globally rigid}. 
$(\mathcal{G},p)$ is {\it minimally rigid} in $\mathbb{R}^{d}$ if there does not exist any subgraph $\mathcal{H}$ with $n$ vertices and less than $m$ edges such that $(\mathcal{H},p)$ is rigid in $\mathbb{R}^{d}$.
 The Jacobi matrix $R_{\mathcal{G}}(p)=\frac{\partial f_{\mathcal{G}}}{\partial p}$ is called the {\it rigidity matrix}. Vectors in the kernel of $R_{\mathcal{G}}(p)$ are called {\it infinitesimal motions}, which ensure the invariance of the rigidity function $f_{\mathcal{G}}$. If the rigidity function is defined by RoDs (bearings, SAs), similar notions can also be proposed in RoD (bearing, SA) rigidity theory. 
Here we briefly review some results useful for later discussion.

\subsubsection{Distance rigidity theory versus RoD rigidity theory}
In RoD rigidity theory, the rigidity function is defined as 
\begin{equation}\label{def:RoD0}
{{f}_{\mathcal{G}}}(p)\triangleq{{(\ldots,\kappa_{ijk},\ldots)}^{\top}},\text{ }(i,j,k)\in\mathcal{T}_{\mathcal{G}},
\end{equation}
where $\mathcal{T}_{\mathcal{G}}=\{(i,j,k):(i,j),(i,k)\in\mathcal{E},j<k\}$, each RoD (Figure \ref{def:SA_RoD}(a)) is defined as  
\begin{equation}    \label{def:RoD}
    \kappa_{ijk}=\frac{d_{ik}^{l}}{d_{ij}^{l}},l\in\mathbb{Z}^{+}.
\end{equation}

\begin{figure}[!ht]
\centerline{\includegraphics[width=0.55\columnwidth]{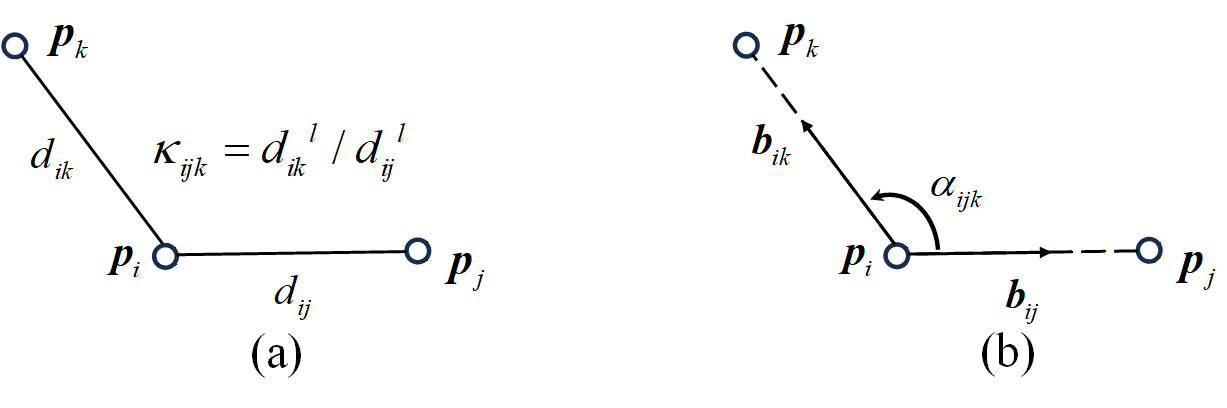}}
\caption{Illustration of RoD $\kappa_{ijk}$ and SA $\alpha_{ijk}$.}
\label{def:SA_RoD}
\end{figure}
The following lemma connects distance rigidity with RoD rigidity.
\begin{lemma}[Equivalence between RoD rigidity and distance rigidity theory\cite{lyjbb8.1}]\label{eqRoD}
   (Infinitesimal/Global) RoD rigidity of a framework is equivalent to its (infinitesimal/global) distance rigidity.
\end{lemma}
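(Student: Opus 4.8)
The plan is to exploit the elementary fact that a RoD is just a ratio of two distances, so that RoD rigidity is ``distance rigidity modulo a global scaling''. First I would reduce to the exponent $l=1$, since $\frac{\|p_i-p_k\|^{l}}{\|p_i-p_j\|^{l}}=\bigl(\frac{\|p_i-p_k\|}{\|p_i-p_j\|}\bigr)^{l}$ and $t\mapsto t^{l}$ is a bijection of $\mathbb{R}^{+}$, and I may assume $\mathcal{G}$ is connected (otherwise neither notion can hold). The key structural observation, used throughout, is that once one edge length is fixed, the equalities $d_{ik}=\kappa_{ijk}d_{ij}$ propagate along a spanning tree of $\mathcal{G}$ and pin down all remaining edge lengths; hence for the complete graph the RoD level set $f^{-1}_{\mathcal{K}}(f_{\mathcal{K}}(p))$ is exactly $\bigcup_{c>0}c\cdot(\text{congruence class of }p)$, i.e.\ the set of all configurations similar to $p$ (translations, rotations, reflections, positive scalings).

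For the global statement I would argue both directions by this propagation. If $(\mathcal{G},p)$ is globally distance rigid and $q$ satisfies all RoD constraints of $(\mathcal{G},p)$, rescale $q$ by the factor $c=\|q_{i_0}-q_{j_0}\|/\|p_{i_0}-p_{j_0}\|$ for one fixed edge $(i_0,j_0)\in\mathcal{E}$; the rescaled $q'=q/c$ still satisfies every RoD constraint (scale invariance) and matches that one edge length, so by propagation it matches all edge lengths, lies in the distance level set of $(\mathcal{G},p)$, and hence -- by global distance rigidity -- in the congruence class of $p$; therefore $q=cq'$ is similar to $p$. Conversely, if $q$ satisfies all distance constraints it satisfies all RoD constraints, so global RoD rigidity forces $q=\mathbf{1}_n\otimes\xi+c(I_n\otimes\mathcal{R}(\theta))p$ (up to reflection); comparing edge lengths gives $c=1$ by Assumption~1, so $q$ is a congruence image of $p$. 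The local (non-global) version follows the same lines after intersecting with neighbourhoods, using that for $q$ near $p$ the scale $c$ is near $1$ and that the similarity set of $p$ fibres over the congruence class of $p$ via the scaling action; the smooth-manifold description of $S_p$ in Lemma~\ref{lem2nn} (valid even at degenerate $p$) supplies the dimension bookkeeping.

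For the infinitesimal statement I would compare kernels of the two rigidity matrices $R^{\mathrm{d}}_{\mathcal{G}}(p)$ and $R^{\mathrm{RoD}}_{\mathcal{G}}(p)$. Differentiating $\kappa_{ijk}=d_{ik}/d_{ij}$ gives $\dot{\kappa}_{ijk}=\kappa_{ijk}(\dot{d}_{ik}/d_{ik}-\dot{d}_{ij}/d_{ij})$, in agreement with the expression for $\dot{\kappa}_{ijk}$ recalled above, so $\delta p$ is an infinitesimal RoD motion precisely when the logarithmic stretch rate $\dot{d}_{ij}/d_{ij}$ agrees on any two edges sharing a vertex; connectivity of $\mathcal{G}$ then forces one common value $\lambda$, and $\delta p-\lambda p$ makes every $\dot{d}_{ij}$ vanish, i.e.\ $\delta p-\lambda p$ is an infinitesimal distance motion. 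Conversely every infinitesimal distance motion is an infinitesimal RoD motion, and so is $p$ itself (then $\dot{d}_{ij}/d_{ij}\equiv 1$, so all $\dot{\kappa}_{ijk}=0$), whereas $p$ is not an infinitesimal distance motion since $\dot{d}_{ij}=d_{ij}>0$ under $\delta p=p$ by Assumption~1. Hence $\nulll(R^{\mathrm{RoD}}_{\mathcal{G}}(p))=\nulll(R^{\mathrm{d}}_{\mathcal{G}}(p))\oplus\spann\{p\}$, so $\rank(R^{\mathrm{RoD}}_{\mathcal{G}}(p))=\rank(R^{\mathrm{d}}_{\mathcal{G}}(p))-1$; since in $\mathbb{R}^{2}$ infinitesimal distance rigidity means rank $2n-3$ and infinitesimal RoD rigidity means rank $2n-4$, the two are equivalent (the same one-vector kernel comparison works in $\mathbb{R}^{d}$ with the $(d+\binom{d}{2})$-dimensional trivial distance-motion space).

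I expect the only real obstacle to be the local (non-global) case: making the neighbourhoods on the two sides correspond under the rescaling map and confirming the trivial-motion spaces differ in dimension by exactly one even for degenerate configurations. Everything else is the ``recover all distances from all ratios plus one length'' bookkeeping together with the one-line kernel comparison, and the degenerate case is handled by the manifold statement for $S_p$ already available in the excerpt.
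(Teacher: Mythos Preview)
The paper does not supply its own proof of this lemma: it appears in the preliminary appendix as a result cited from \cite{lyjbb8.1}, so there is nothing in the paper to compare your argument against.

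Your proposal is correct and follows the natural route. The infinitesimal part is clean: the identity $\dot{\kappa}_{ijk}=\kappa_{ijk}(\dot d_{ik}/d_{ik}-\dot d_{ij}/d_{ij})$ together with line-graph connectivity of $\mathcal{G}$ gives exactly $\nulll(R^{\mathrm{RoD}}_{\mathcal{G}}(p))=\nulll(R^{\mathrm{d}}_{\mathcal{G}}(p))\oplus\spann\{p\}$, and the rank comparison $2n-4=(2n-3)-1$ finishes it. The global part is likewise fine, since the same line-graph connectivity lets one fixed edge length propagate to all others through the RoD constraints. Two small points to tighten: (i) in the local (non-global) case, make explicit that the rescaling $q\mapsto q/c$ with $c=d_{i_0j_0}(q)/d_{i_0j_0}(p)$ is continuous and carries a small neighbourhood of $p$ into a slightly larger one, so that distance rigidity on the latter yields RoD rigidity on the former; (ii) your appeal to Lemma~\ref{lem2nn} is not quite apt, since $S_p$ there is the SA-RoD orbit (no reflections), whereas the pure RoD level set of $\mathcal{K}$ is the full similarity class including reflections --- but your argument does not actually need that lemma once (i) is written out.
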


\subsubsection{Bearing rigidity theory versus SA rigidity theory}

In bearing rigidity theory, the rigidity function is defined as 
\begin{equation}
{{f}_{\mathcal{G}}}(p)\triangleq(\ldots ,b_{ij}^{\top},\ldots)^{\top},\text{ }(i,j)\in \mathcal{E},
\end{equation}
where $b_{ij}(p)=\frac{p_{j}-p_{i}}{||p_{j}-p_{i}||}$. The infinitesimal bearing rigidity of a framework implies its bearing rigidity, which is equivalent to its global bearing rigidity\cite{lyjb4}. 

In SA rigidity theory \cite{lyjb9,lyjbb10}, the rigidity function is defined as
\begin{equation}
{{f}_{\mathcal{G}}}(p)\triangleq(\ldots ,\alpha_{ijk},\ldots)^{\top},\text{ }(i,j,k)\in\mathcal{T}_{\mathcal{G}},
\end{equation}
where SA $\alpha_{ijk}\in[0,2\pi)$ (see Figure \ref{def:SA_RoD} (b)) is given by \footnote{Different from the labeling scheme used in \cite{lyjb9,lyjbb10}, we adopt $\alpha_{ijk}$ to represent the signed angle from $b_{ij}$ to $b_{ik}$ in the counter-clockwise direction.}
\begin{equation}\label{def:SA}
{{\alpha }_{ijk}}=
\begin{cases}
\text{acos}(b_{ij}^{\top}{{b}_{ik}}),& b_{ik}^{\top}\mathcal{R}(\pi/2){{b}_{ij}}\ge 0, \\ 
\text{2}\pi \text{-acos}(b_{ij}^{\top}{{b}_{ik}}), & b_{ik}^{\top}\mathcal{R}(\pi/2){{b}_{ij}}<0. 
\end{cases}
\end{equation}

The following lemmas connect SA rigidity theory with bearing rigidity theory and provide a criterion for shape determination by SAs.
\begin{lemma}[Equivalence between SA rigidity and bearing rigidity theory\cite{lyjbb10}]\label{eqSA}
    (Infinitesimal/Global) SA rigidity of a framework in $\mathbb{R}^{2}$ is equivalent to its (infinitesimal/global) bearing rigidity.
\end{lemma}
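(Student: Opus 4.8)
The plan is to route everything through the edge bearings $b_{ij}$, exploiting that each signed angle $\alpha_{ijk}$ is determined by the pair $b_{ij},b_{ik}$ alone, while conversely the full collection of signed angles at a vertex $i$ determines the local bearings $\{b_{ij}:j\in\mathcal N_i\}$ only up to one common rotation. The first and central step is to establish the structural identity that, for a connected graph $\mathcal H$ and a configuration $p$ with distinct edge endpoints, the set of configurations with the same signed angles as $p$ equals $\bigcup_{\theta\in[0,2\pi)}(I_n\otimes\mathcal R(\theta))\,f^{-1}_{\mathcal H,B}(f_{\mathcal H,B}(p))$, where $f_{\mathcal H,B}$ and $f_{\mathcal H,\alpha}$ denote the bearing and SA rigidity functions of $(\mathcal H,p)$; applying this with $\mathcal H$ the complete graph in particular shows that having the same signed angles on $\mathcal K$ is exactly having the same shape up to uniform rotation, translation, and scaling. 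The inclusion $\supseteq$ is immediate, since a uniform rotation preserves all bearings and hence all signed angles. For $\subseteq$, if $q$ has the same signed angles as $p$, then fixing a reference neighbor $j_0(i)$ at each vertex and using $b_{ij}=\mathcal R(\alpha_{i\,j_0(i)\,j})\,b_{i\,j_0(i)}$ gives $b_{ij}(q)=\mathcal R(\theta_i)\,b_{ij}(p)$ for all $j\in\mathcal N_i$, where $\mathcal R(\theta_i)$ carries $b_{i\,j_0(i)}(p)$ to $b_{i\,j_0(i)}(q)$; the antipodal relation $b_{ji}=-b_{ij}$ then forces $\mathcal R(\theta_i)b_{ji}(p)=\mathcal R(\theta_j)b_{ji}(p)$, i.e.\ $\theta_i\equiv\theta_j\pmod{2\pi}$ on every edge, and connectedness upgrades this to a single $\theta$.

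Next I would deduce the global equivalence from this identity. Global bearing rigidity, $f^{-1}_{\mathcal G,B}(f_{\mathcal G,B}(p))=f^{-1}_{\mathcal K,B}(f_{\mathcal K,B}(p))$, implies global SA rigidity at once by taking the union of rotated copies of both sides. Conversely, assume global SA rigidity and take $q$ with the same bearings as $p$ on every edge of $\mathcal G$; then $q$ has the same signed angles as $p$ on $\mathcal G$, hence on $\mathcal K$ by hypothesis, so by the identity $q=(I_n\otimes\mathcal R(\theta))r$ for some $\theta$ and some $r$ with the same bearings as $p$ on all of $\mathcal K$; comparing bearings on any edge of $\mathcal G$ gives $b_{ij}(p)=\mathcal R(\theta)b_{ij}(p)$, and since $b_{ij}(p)\neq0$ this forces $\theta=0$, so $q=r\in f^{-1}_{\mathcal K,B}(f_{\mathcal K,B}(p))$; thus $(\mathcal G,p)$ is globally bearing rigid. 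The same argument with every set intersected with a fixed neighbourhood $U_p$ gives SA rigidity $\Rightarrow$ bearing rigidity, and combined with the already-recorded equivalence between bearing rigidity and global bearing rigidity \cite{lyjb4}, this yields the plain equivalence as well (SA rigid $\Rightarrow$ bearing rigid $=$ globally bearing rigid $=$ globally SA rigid $\Rightarrow$ SA rigid).

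For the infinitesimal equivalence I would differentiate, writing $R_{\mathcal G,B}(p)$ and $R_{\mathcal G,\alpha}(p)$ for the bearing and SA rigidity matrices. Introduce the edge angular velocity $\beta_{ij}\triangleq b_{ij}^\top\mathcal R(\pi/2)(\delta p_j-\delta p_i)/\|e_{ij}\|$; one checks $\beta_{ij}=\beta_{ji}$ (from $b_{ji}=-b_{ij}$), that the displayed formula for $\dot\alpha_{rst}$ collapses to $\dot\alpha_{rst}=\beta_{rs}-\beta_{rt}$, and that $\dot b_{ij}=0$ is equivalent to $\beta_{ij}=0$. Hence $\delta p\in\nulll(R_{\mathcal G,\alpha}(p))$ iff all $\beta$'s incident to each vertex coincide, which together with $\beta_{ij}=\beta_{ji}$ and connectedness means all $\beta_{ij}$ equal one common scalar, whereas $\delta p\in\nulll(R_{\mathcal G,B}(p))$ iff all $\beta_{ij}=0$. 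Since the infinitesimal rotation $w\triangleq(I_n\otimes\mathcal R(\pi/2))p$ realises a constant nonzero value of $\beta_{ij}$ on every edge, subtracting a suitable multiple of $w$ shows $\nulll(R_{\mathcal G,\alpha}(p))=\nulll(R_{\mathcal G,B}(p))\oplus\spann\{w\}$ with $w\notin\nulll(R_{\mathcal G,B}(p))$, so $\rank R_{\mathcal G,\alpha}(p)=\rank R_{\mathcal G,B}(p)-1$. Therefore $\rank R_{\mathcal G,\alpha}(p)=2n-4$ iff $\rank R_{\mathcal G,B}(p)=2n-3$, which is precisely infinitesimal SA rigidity iff infinitesimal bearing rigidity (for disconnected $\mathcal G$ both properties fail, so there is nothing to prove).

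The hard part is the structural identity of the first step — turning "the local bearings agree up to a per-vertex rotation'' into "they agree up to one global rotation'' via the antipodal edge relation $b_{ji}=-b_{ij}$ and connectedness is where the geometry really enters. Everything downstream is bookkeeping: short set inclusions for the global and plain cases, and the elementary null-space computation for the infinitesimal case, together with the routine verifications that $\beta_{ij}=\beta_{ji}$, $\dot\alpha_{rst}=\beta_{rs}-\beta_{rt}$, $w\notin\nulll(R_{\mathcal G,B}(p))$, and that the trivial-motion spaces have the stated dimensions $3$ and $4$ once edge endpoints are distinct.
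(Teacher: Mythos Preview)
The paper does not prove this lemma; it is stated in the preliminaries as a result quoted from reference~\cite{lyjbb10}, with no accompanying argument. So there is no in-paper proof to compare against.

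That said, your proof is correct. The structural identity
\(
f^{-1}_{\mathcal H,\alpha}(f_{\mathcal H,\alpha}(p))=\bigcup_{\theta\in[0,2\pi)}(I_n\otimes\mathcal R(\theta))\,f^{-1}_{\mathcal H,B}(f_{\mathcal H,B}(p))
\)
is established exactly as you describe: per-vertex rotations $\theta_i$ via the reference-neighbor trick, then the antipodal relation $b_{ji}=-b_{ij}$ forces $\theta_i\equiv\theta_j$ along each edge, and connectedness collapses everything to a single $\theta$. The global equivalence then follows by the short set-inclusion chase you give, and your appeal to bearing rigidity $=$ global bearing rigidity (recorded in the paper just above this lemma) correctly closes the loop for the plain-rigidity equivalence. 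For the infinitesimal case, the computation $\dot\alpha_{rst}=\beta_{rs}-\beta_{rt}$, $\beta_{ij}=\beta_{ji}$, and the realisation of a constant nonzero $\beta$ by $w=(I_n\otimes\mathcal R(\pi/2))p$ indeed yield $\nulll(R_{\mathcal G,\alpha}(p))=\nulll(R_{\mathcal G,B}(p))\oplus\spann\{w\}$ and hence the rank relation $\rank R_{\mathcal G,\alpha}(p)=\rank R_{\mathcal G,B}(p)-1$. One small remark: your per-vertex step tacitly assumes each vertex has degree at least two so that a signed angle anchors $\theta_i$; degree-one vertices are harmless since their single bearing is fixed by the neighbor's $\theta_j$ via $b_{ij}=-b_{ji}$, but it is worth saying explicitly.
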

\begin{lemma}[Shape determination by SAs\cite{lyjbb10}]\label{eqSA1}
    Given a framework $(\mathcal{G}, p)$ in $\mathbb{R}^{2}$, the following
statements are equivalent:\\
(i) $(\mathcal{G}, p)$  is infinitesimally SA rigid;\\
(ii) $(\mathcal{G}, p)$ is non-degenerate and globally SA rigid;\\
(iii) The shape of $(\mathcal{G}, p)$  can be uniquely determined by SAs up to uniform rotations, translations, and scalings.
\end{lemma}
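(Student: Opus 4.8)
The plan is to prove the cycle of implications (i)$\,\Rightarrow\,$(iii)$\,\Rightarrow\,$(ii)$\,\Rightarrow\,$(i), invoking Lemma~\ref{eqSA} to pass between SA and bearing rigidity whenever convenient, together with one auxiliary fact, which I will call $(\star)$: \emph{for a non-degenerate configuration $p$ in $\mathbb{R}^{2}$, the level set $f_{\mathcal{K}}^{-1}(f_{\mathcal{K}}(p))$ of the SA rigidity function of the complete graph equals $S_{p}$}. The inclusion $S_{p}\subseteq f_{\mathcal{K}}^{-1}(f_{\mathcal{K}}(p))$ is clear, since uniform rotations, translations, and scalings preserve every signed angle. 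For the reverse inclusion, let $q$ have the same signed angles as $p$ at every vertex; then at each vertex $i$, knowing one bearing $b_{ij}(q)$ determines all the remaining bearings at $i$ through the signed angles $\alpha_{ijk}$ --- it is precisely the sign that excludes a reflection at $i$ --- so $b_{ik}(q)=\mathcal{R}(\theta_{i})b_{ik}(p)$ for a vertex-dependent rotation $\theta_{i}$. Comparing the two endpoints of any edge via $b_{ji}=-b_{ij}$ forces $\theta_{i}$ to be one common angle $\theta$ at all vertices, so $q_{j}-q_{i}$ is a positive multiple of $\mathcal{R}(\theta)(p_{j}-p_{i})$ for every pair; non-degeneracy of $p$ (propagating the scale factor through overlapping non-collinear triples) pins this multiple to a single $c>0$, whence $q\in S_{p}$.

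\emph{(i)$\,\Rightarrow\,$(iii) and (iii)$\,\Rightarrow\,$(ii).} Suppose $(\mathcal{G},p)$ is infinitesimally SA rigid. By Lemma~\ref{eqSA} it is infinitesimally bearing rigid, hence bearing rigid, hence (as recalled in Appendix~\ref{deg:grt}) globally bearing rigid, hence globally SA rigid again by Lemma~\ref{eqSA}. Moreover $p$ is non-degenerate: if $p_{1},\dots,p_{n}$ were collinear, displacing a single vertex along the common line would leave every bearing $b_{ij}$ --- and therefore every signed angle in $\mathcal{T}_{\mathcal{G}}$ --- unchanged, while changing ratios of distances; since $n\geq 3$ there is such a vertex, and the resulting infinitesimal motion is not a combination of uniform rotation, translation, and scaling, so $\rank R_{\mathcal{G}}(p)<2n-4$, contradicting infinitesimal SA rigidity. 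Combining global SA rigidity with $(\star)$ gives $f_{\mathcal{G}}^{-1}(f_{\mathcal{G}}(p))=f_{\mathcal{K}}^{-1}(f_{\mathcal{K}}(p))=S_{p}$, which is exactly statement (iii). Conversely, if (iii) holds, i.e.\ $f_{\mathcal{G}}^{-1}(f_{\mathcal{G}}(p))=S_{p}$, the same sliding argument shows $p$ must be non-degenerate (otherwise a vertex slide produces a configuration outside $S_{p}$ with identical signed angles), and then $(\star)$ gives $f_{\mathcal{G}}^{-1}(f_{\mathcal{G}}(p))=S_{p}=f_{\mathcal{K}}^{-1}(f_{\mathcal{K}}(p))$, i.e.\ $(\mathcal{G},p)$ is globally SA rigid; together with non-degeneracy this is (ii).

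\emph{(ii)$\,\Rightarrow\,$(i).} This is the delicate step and, I expect, the main obstacle. By $(\star)$, (ii) is equivalent to having $f_{\mathcal{G}}^{-1}(f_{\mathcal{G}}(p))=S_{p}$ with $p$ non-degenerate, and by Lemma~\ref{eqSA} the whole claim is equivalent to the bearing-rigidity statement that a non-degenerate, globally bearing rigid framework in $\mathbb{R}^{2}$ is infinitesimally bearing rigid. Unlike the implication (i)$\,\Rightarrow\,$(iii), this direction cannot be obtained by soft dimension counting --- a smooth $4$-dimensional level set is perfectly compatible with a rank-deficient rigidity matrix at $p$ --- so one has to exploit the particular algebraic structure of the bearing/SA constraints to rule out a nontrivial infinitesimal SA motion that fails to integrate to a finite SA-preserving deformation. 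Such second-order flexes genuinely occur on collinear frameworks, which is why the non-degeneracy hypothesis is indispensable here; a complete argument must therefore use that $p$ avoids every line. This is precisely the part of bearing rigidity theory developed in \cite{lyjb4}, and I would borrow it (via Lemma~\ref{eqSA}) rather than reprove it.
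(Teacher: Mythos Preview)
The paper does not prove Lemma~\ref{eqSA1}: it is quoted verbatim in the preliminaries (Appendix~\ref{pre}) as a result of \cite{lyjbb10}, with no argument supplied, so there is no in-paper proof to compare your attempt against.

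That said, your reconstruction is sound and follows the route one would expect. The auxiliary fact $(\star)$ is argued correctly---the propagation of a common rotation angle via $b_{ji}=-b_{ij}$ and of a common scale via overlapping non-collinear triples is exactly right, and your sliding argument for non-degeneracy is the standard one. You are also right that (ii)$\Rightarrow$(i) is the substantive direction and that it genuinely needs the non-degeneracy hypothesis: for instance, three collinear points with the complete graph are trivially globally SA (and bearing) rigid yet not infinitesimally so. One small remark: the paper's preliminaries only record that infinitesimal bearing rigidity implies bearing rigidity and that bearing rigidity is equivalent to global bearing rigidity; the converse you need for (ii)$\Rightarrow$(i) is a bit more than what is quoted here, though it is indeed available in the bearing-rigidity literature you cite. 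Since the present paper treats Lemma~\ref{eqSA1} itself as imported background, deferring that step to \cite{lyjb4} and \cite{lyjbb10} is entirely appropriate.
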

\subsubsection{Generic property}
A configuration $p$ is said to be {\it generic} if its coordinates do not satisfy any non-zero polynomial equations with integer coefficients. $(\mathcal{G},p)$ is called a {\it generic framework} if $p$ is generic. Generic configurations form a dense set, i.e., the set of all non-generic configurations is of measure zero\cite{lyjbb9.14}.

In distance rigidity theory, Laman graphs are perhaps most well-known due to their elegant properties and simple construction process\cite{laman1970graphs}. 
\begin{definition}[Laman graph]
Graph $\mathcal{G}=(\mathcal{V},\mathcal{E})$ is a Laman graph if $m=2n-3$ and there exist less than $2k-3$ edges for any subgraph of $\mathcal{G}$ with $k\geq 2$ vertices.
\end{definition}


For distance/bearing/SA constrained frameworks, underlying graphs play a key role in determining (global) rigidity. Specifically, infinitesimal distance/bearing/SA rigidity is a generic property of the underlying graph. That is, given a graph
 $\mathcal{G}$, suppose that there exists
  one generic configuration $p\in\mathbb{R}^{nd}$ such that $(\mathcal{G},p)$ is infinitesimally distance/bearing/SA rigid. Then for any generic configuration $q\in\mathbb{R}^{nd}$, $(\mathcal{G},q)$ is infinitesimally distance/bearing/SA rigid \cite{lyjbb9.14,lyjb4,lyjbb10}. 

\section{Proof of Lemma $\ref{lemm01}$}
\label{pol1}
\begin{proof}
Denote $D_{ij}=\frac{b_{ij}}{||e_{ij}||}, D_{ik}=\frac{b_{ik}}{||e_{ik}||}, D_{ijk}=D_{ij}-D_{ik}$. Let 
$A_{ij}=\mathcal{R}(\pi/2)D_{ij}$, $A_{ik}=\mathcal{R}(\pi/2)D_{ik}$,$ A_{ijk}=A_{ij}-A_{ik}$. The SA-RoD rigidity matrix $R_{\mathcal{G}_{A,D}}(p)$ can be expressed in the following form:
\begin{equation}\label{IRM}
    \bordermatrix{%
&\ldots	& \text{vertex}\  i  & \text{vertex}\  r      & \text{vertex}\  j  & \text{vertex}\  s &\text{vertex}\  k & \text{vertex}\  t&\ldots\cr
 \ldots &\ldots & \ldots & \ldots & \ldots & \ldots & \ldots& \ldots&\ldots\cr
		\alpha_{rst} &\mathbf{0} & 0_{1\times2} & A_{rst}^{\top} & 0_{1\times2}& -A_{rs}^{\top}& 0_{1\times2} & A_{rt}^{\top}&\mathbf{0}\cr
 \ldots&\ldots & \ldots & \ldots & \ldots & \ldots & \ldots& \ldots&\ldots\cr
    \kappa_{ijk}&\mathbf{0}  & \kappa_{ijk}D_{ijk}^{\top} & 0_{1\times2}& -\kappa_{ijk}D_{ij}^{\top} & 0_{1\times2} & \kappa_{ijk}D_{ik}^{\top} & 0_{1\times2}&\mathbf{0}\cr
 \ldots &\ldots & \ldots & \ldots & \ldots & \ldots & \ldots& \ldots &\ldots
    },
\end{equation}
where the rows are indexed by the triplet sets $\mathcal{T}_{D}$ and $\mathcal{T}_{A}$, the columns are indexed by the vertices. 

We consider the row vectors in $R_{\mathcal{G}(A,D)}$. If $i\in\mathcal{V}_{D}$, let $j_{0}=\argmin_{j\in\mathcal{N}_{i}}j$. After fixing the edge $(i,j_{0})\in\mathcal{E}$, consider the triplets at node $i$ formed by this edge with all other $\deg(i)-1$ edges. It holds that $\kappa_{ij_{0}j_{1}}\kappa_{ij_{1}j_{2}}=\kappa_{ij_{0}j_{2}}$ whenever $j_{0}<j_{1}<j_{2}\in\mathcal{N}_{i}$. The partial derivative with respect to $p$ can be obtained as
$\frac{\partial\kappa_{ij_{1}j_{2}}}{\partial p}=\kappa_{ij_{0}j_{1}}^{-1}[\frac{\partial\kappa_{ij_{0}j_{2}}}{\partial p}-\kappa_{ij_{1}j_{2}}\frac{\partial\kappa_{ij_{0}j_{1}}}{\partial p}].$
Thus, all the vectors in the set $\{\frac{\partial\kappa_{ijk}}{\partial p}:j<k\in\mathcal{N}_{i}\}$ can be written as linear combinations of vectors in the set $\{\frac{\partial\kappa_{ij_{0}k}}{\partial p}:k\in\mathcal{N}_{i},k\neq j_{0}\}$.
Then, we have $$\dim (\spann\{\frac{\partial\kappa_{ijk}}{\partial p}:j<k\in\mathcal{N}_{i}\})\leq |\{\frac{\partial\kappa_{ij_{0}k}}{\partial p}:k\in\mathcal{N}_{i},k\neq j_{0}\}|=\deg(i)-1.$$ Similarly, this conclusion is also valid for $i\in\mathcal{V}_{A}$. That is, the rank of the submatrix corresponding to constraints at vertex $i$ in $(\ref{IRM})$ is less than $\deg(i)-1$.  Using the rank inequality $\rank[X^{\top},Y^{\top}]^{\top}\leq \rank(X)+\rank(Y)$ inductively, we can derive that  $\rank(R_{\mathcal{G}_{A,D}}(p))\leq\sum\limits_{i\in\mathcal{V}}[\deg(i)-1]=2|\mathcal{E}|-|\mathcal{V}|=2m-n.$ Thus, the proof is finished.
\end{proof}

 \section{Proof of Lemma $\ref{lem1o}$}
 \label{pK}
 Hereafter, we use the abbreviated notation $\mathcal{G}$ or $\mathcal{K}$  instead of $\mathcal{G}(A,D)$ or $\mathcal{K}(A,D)$ whenever the bipartition is clear from the context.
 \begin{proof}
 Suppose that $f_{\mathcal{K}}(q)=f_{\mathcal{K}}(p)$ for some $q\in\mathbb{R}^{2n}$. Assume $i\in\mathcal{V}_{A},j\in\mathcal{V}_{D}$, let $c=\frac{||q_{i}-q_{j}||}{||p_{i}-p_{j}||}$ and $\theta\in[0,2\pi)$ be the signed angle rotating from the bearing $b_{ij}(p)$ to $b_{ij}(q)$. Thus, 
 \begin{equation}
 q_{i}-q_{j}=c\mathcal{R}(\theta)(p_{i}-p_{j}).
 \label{l6}
 \end{equation} Let $\xi=q_{i}-c\mathcal{R}(\theta)p_{i}$, it can be verified that \begin{equation}q_{k}=\xi+c\mathcal{R}(\theta)p_{k},\ k\in\{i,j\}.
 \label{l7}
 \end{equation} Then, our goal is to show that
 \begin{equation}q_{k}=\xi+c\mathcal{R}(\theta)p_{k},\ k\in\mathcal{V}\setminus\{ i,j\}.
 \label{l8}
 \end{equation}

 \textit{Case 1:} $k\in\mathcal{V}_{A}$. Consider the triangle $\Delta ijk$ (may be degenerate), two signed angles $\alpha_{ijk},\alpha_{kij}$ are known, then the third signed angle can also be determined by $\alpha_{jik}=\pi+\alpha_{ijk}+\alpha_{kij}\mod2\pi$. If the triangle $\Delta ijk$ is non-degenerate, then RoDs can be uniquely determined by the standard sine law. If the triangle $\Delta ijk$ is degenerate, without loss of generality, assume $\alpha_{ijk}=\alpha_{kij}=0, \alpha_{jik}=\pi$. Utilizing RoD $\kappa_{jik}=||p_{k}-p_{j}||/||p_{i}-p_{j}||$, we can obtain that $\kappa_{ijk}=1+\kappa_{jik},\kappa_{kij}=\kappa_{jik}/(1+\kappa_{jik}).$  Therefore, all the RoDs and SAs in the triangle  $\Delta ijk$ are determined. Thus, based on $(\ref{l6})$, we have
 \begin{align}
 q_{i}-q_{k}&=\frac{||q_{i}-q_{k}||}{||q_{i}-q_{j}||}\mathcal{R}(\alpha_{ijk})(q_{i}-q_{j})
 =c\frac{||q_{i}-q_{k}||}{||q_{i}-q_{j}||}\mathcal{R}(\alpha_{ijk})\mathcal{R}(\theta)(p_{i}-p_{j})
 \notag
 \\&=c\mathcal{R}(\theta)\frac{||p_{i}-p_{k}||}{||p_{i}-p_{j}||}\mathcal{R}(\alpha_{ijk})(p_{i}-p_{j})
 =c\mathcal{R}(\theta)(p_{i}-p_{k}).
 \end{align}
 Using $(\ref{l7})$, one can directly derive $(\ref{l8})$.

 \textit{Case 2:} $k\in\mathcal{V}_{D}$. There are two vertices among $i, j$, and $k$ lying in $\mathcal{V}_{D}$. Similarly, all the RoDs and SA $\alpha_{ijk}$ are determined. One can show that all the SAs can be determined no matter whether the triangle is degenerate or not. Following a similar routine in Case 1, one can finish the proof.
 \end{proof}

\section{Proof of Lemma $\ref{lem2nn}$} 
\label{diffe}
\begin{proof}
According to the definition of $ S_{p}$, we construct a mapping $F:[0,2\pi)\times\mathbb{R}^{2}\times \mathbb{R}^{+}\to S_{p}\subseteq\mathbb{R}^{2n}$ as follows:
$$F(\theta,\xi,c)\triangleq\mathbf{1}_{n}\otimes\xi+c(I_{n}\otimes\mathcal{R}(\theta))p.$$

\textit{Step 1:} We will show that $F$ is a bijective mapping. The definition of $S_{p}$ indicates that $F$ is surjective.  To show its injectivity, suppose $F(\theta,\xi,c)=F(\theta^{\prime},\xi^{\prime},c^{\prime})$, that is, $$\mathbf{1}_{n}\otimes\xi+c(I_{n}\otimes\mathcal{R}(\theta))p=\mathbf{1}_{n}\otimes\xi^{\prime}+c^{\prime}(I_{n}\otimes\mathcal{R}(\theta^{\prime}))p.$$
In component form, this is equivalent to
\begin{equation}
\xi+c\mathcal{R}(\theta)p_{i}=\xi^{\prime}+c^{\prime}\mathcal{R}(\theta^{\prime})p_{i},\ \forall i\in \{1,2,\ldots,n\}.
\label{l0}
\end{equation}
Then we obtain 
\begin{equation}
c\mathcal{R}(\theta)(p_{i}-p_{j})=c^{\prime}\mathcal{R}(\theta^{\prime})(p_{i}-p_{j}),\ \forall i,j\in \{1,2,\ldots,n\}, i\neq j.
\label{l1}
\end{equation}
Since $p_{i}\neq p_{j}$ when $i\neq j$, analyzing the norms on both hand sides of $(\ref{l1})$ provides that $c=c^{\prime}>0$ and $\theta=\theta^{\prime}\in [0,2\pi)$. Substituting these into $(\ref{l0})$ yields that $\xi=\xi^{\prime}$.

\textit{Step 2:} We will show that $S_{p}$ is a 4-dimensional smooth manifold.
First, we show that for each $q\in S_{p}$, there exists a neighborhood $U_{q}\subseteq S_{p}$ that is homeomorphic to an open set $U\subseteq\mathbb{R}^{4}$. If $q\in S_{p}$, there exist prameters $\theta_{0}\in[0,2\pi)$, $\xi_{0}\in\mathbb{R}^{2}$, and $c_{0}>0$ such that $$q=F(\theta_{0},\xi_{0},c_{0})=\mathbf{1}_{n}\otimes\xi_{0}+c_{0}(I_{n}\otimes\mathcal{R}(\theta_{0}))p.$$

 If $\theta_{0}\neq0$, let $\delta=\min\{|\theta_{0}|,|\theta_{0}-2\pi|\}$, $U_{q}=\{q^{\prime}=\mathbf{1}_{n}\otimes\xi+c(I_{n}\otimes\mathcal{R}(\theta))p:\theta\in(\theta_{0}-\delta,\theta_{0}+\delta),\xi\in\mathbb{R}^{2},c\in(0,+\infty)\}$. One can observe that $(\theta_{0}-\delta,\theta_{0}+\delta)\subseteq [0,2\pi)$ and $U_{q}\subseteq S_{p}$ is a neighborhood near $q$. Based on the conclusion in Step 1, $F:(\theta_{0}-\delta,\theta_{0}+\delta)\times\mathbb{R}^{2}\times \mathbb{R}^{+}\to U_{q}$ is bijective. One can also verify its inverse mapping is also continuous. So $U_{q}$ is homeomorphic to the open set $(\theta_{0}-\delta,\theta_{0}+\delta)\times\mathbb{R}^{2}\times \mathbb{R}^{+}\subseteq\mathbb{R}^{4}$. 

If $\theta_{0}=0$,  we take a small constant $\delta\in (0,\pi)$ and $U_{q}=\{q^{\prime}=\mathbf{1}_{n}\otimes\xi+c(I_{n}\otimes\mathcal{R}(\theta))p:\theta\in(-\delta,\delta),\xi\in\mathbb{R}^{2},c\in(0,+\infty)\}$. Although $(-\delta,\delta)$ is not included in $[0,2\pi)$, the periodicity of the rotation matrix $\mathcal{R}(\theta)$ and arguments in Step 1 also yield that  $F:(-\delta,\delta)\times\mathbb{R}^{2}\times \mathbb{R}^{+}\to U_{q}$ is a homeomorphism.

Thus, $S_{p}$ is a 4-dimensional topological manifold. The transition map in different charts can be verified to be smooth. Therefore, $S_{p}$ is a 4-dimensional smooth manifold.

\textit{Step 3:} We will show that $F$ is a smooth immersion into $\mathbb{R}^{2n}$.
The smoothness of mapping $F$ is obvious. To verify the immersion property, we calculate the rank of its Jacobi map. The $i$-th block of the Jacobi matrix $DF\in\mathbb{R}^{2n\times 4}$ is
$$[I_{2},c\frac{d}{d\theta}\mathcal{R}(\theta)p_{i},\mathcal{R}(\theta)p_{i}]=[I_{2},c\mathcal{R}(\theta+\frac{\pi}{2})p_{i},\mathcal{R}(\theta)p_{i}],$$ where we have applied the identity $\frac{d}{d\theta}\mathcal{R}(\theta)=\mathcal{R}(\theta+\frac{\pi}{2})$. Denote $e_{1}=(1,0)^{\top},e_{2}=(0,1)^{\top}$. Then the four columns of $DF$ correspond to 
$v_{1}=\mathbf{1}_{n}\otimes e_{1},v_{2}=\mathbf{1}_{n}\otimes e_{2},v_{3}=c[\mathbf{1}_{n}\otimes\mathcal{R}(\theta+\pi/2)]p$, and $v_{4}=[\mathbf{1}_{n}\otimes\mathcal{R}(\theta)]p.$
Assume there exist some coefficients $c_{i}\in\mathbb{R}\ (i=1,2,3,4)$ such that $c_{1}v_{1}+c_{2}v_{2}+c_{3}v_{3}+c_{4}v_{4}=0$.  In component form, we have 
\begin{equation}
c_{1}e_{1}+c_{2}e_{2}+c_{3}c\mathcal{R}(\theta+\pi/2)p_{i}+c_{4}\mathcal{R}(\theta)p_{i}=0,\ \forall i\in \{1,2,\ldots,n\}.
\label{l4}
\end{equation}
Similarly, it holds that 
\begin{equation}
c_{3}c\mathcal{R}(\theta+\pi/2)(p_{i}-p_{j})+c_{4}\mathcal{R}(\theta)(p_{i}-p_{j})=0, \ \forall i,j\in \{1,2,\ldots,n\}, i\neq j.
\label{l5}
\end{equation}
Since $\mathcal{R}(\theta+\pi/2)(p_{i}-p_{j})$ is orthogonal to $\mathcal{R}(\theta)(p_{i}-p_{j})$,  (\ref{l5}) yields that $c_{3}=c_{4}=0$. From $(\ref{l4})$, it holds that $c_{1}=c_{2}=0$. Thus, $\rank(DF_{p})=4$. Therefore, $F$ is a smooth immersion.

Consider the well-known parameterization of the unit circle $\mathbb{S}^{1}$ by an exponential mapping $E: [0,2\pi)\to\mathbb{S}^{1}$ ($E(t)=\exp(\sqrt{-1}t)$). Define $\tilde{F}:\mathbb{S}^{1}\times\mathbb{R}^{2}\times \mathbb{R}^{+}\to S_{p}\subseteq\mathbb{R}^{2n},$
$$\tilde{F}(z,\xi,c)={F}(E^{-1}(z),\xi,c)=\mathbf{1}_{n}\otimes\xi+c(I_{n}\otimes\mathcal{R}(E^{-1}(z)))p.$$ From Step 1 and Step 2, $\tilde{F}$ is bijective and its Jacobi map is of rank 4. Based on the global constant rank theorem \cite[Theorem 4.14]{lyjbb9}, $\tilde{F}$ is a diffeomorphism. 
\end{proof}

\section{Proof of Proposition $\ref{pro1}$}
\label{p2}
\begin{proof}
 Given a quadrilateral framework $(\mathcal{G}(A,D),p)$, we have
 \begin{equation}\label{eqli0}
 d_{12}b_{12}+d_{23}b_{23}+d_{34}b_{34}+d_{41}b_{41}=0.
 \end{equation}
  Then, $(\ref{eqli0})$ can be rewritten as:
 \begin{equation}\label{eqli}
 d_{12}\left[\begin{array}{c}\cos\theta_{12}\\ \sin\theta_{12}\end{array}\right]+d_{23}\left[\begin{array}{c}\cos\theta_{23}\\ \sin\theta_{23}\end{array}\right]+d_{34}\left[\begin{array}{c}\cos\theta_{34}\\ \sin\theta_{34}\end{array}\right]+d_{41}\left[\begin{array}{c}\cos\theta_{41}\\ \sin\theta_{41}\end{array}\right]=0,
 \end{equation}

 \textit{Case 1:} $\mathcal{V}_{A}=\{1,2,3\}$, $\mathcal{V}_{D}=\{4\}$. All SAs and $\frac{d_{13}}{d_{14}}$ can be obtained. 

 Denote
 $$M_{2}=\left[
 \begin{array}{cc}
 \cos\theta_{12}& \cos\theta_{23}\\
 \sin\theta_{12}& \sin\theta_{23}
 \end{array}
 \right],\   z=-\left[
 \begin{array}{cc}
 \cos\theta_{34}& \cos\theta_{41}\\
 \sin\theta_{34}& \sin\theta_{41}
 \end{array}
 \right]
 \left[\begin{array}{c} \frac{d_{34}}{d_{14}}\\ 1\end{array}\right].$$
 We investigate solutions of the following equation: 
 \begin{equation}\label{3a}
     M_{2}x=z.
 \end{equation}
 Similarly, $(\frac{d_{12}}{d_{14}},\frac{d_{23}}{d_{14}})^{\top}$ is always a feasible solution to $(\ref{3a})$.
 Note that $\det(M_{2})=\sin(\theta_{23}-\theta_{12})$. So there exists a unique solution to $(\ref{3a})$ if and only if $\sin(\theta_{23}-\theta_{12})\neq 0$, i.e., vertices 1, 2, and 3 are non-collinear. From Theorem $\ref{shp}$, $(\mathcal{G}(A,D),p)$ is globally SA-RoD rigid if and only if vertices 1, 2, and 3 are non-collinear.

 \textit{Case 2:} $\mathcal{V}_{A}=\{1\}$, $\mathcal{V}_{D}=\{2,3,4\}$. The shape of triangle $\Delta 124$ can be uniquely determined by $\alpha_{124}$ and $\kappa_{213}\kappa_{324}\kappa_{413}^{-1}=\frac{d_{14}}{d_{12}}$. Furthermore, the shape of triangle $\Delta 234$ can also be uniquely determined by  RoDs in this triangle. To ensure the shape uniqueness of $\Diamond 1234$, the possible reflection of vertex 3 with respect to the line connecting vertices 2 and 4 should be avoided. This is equivalent to that vertices $2,3,4$ are collinear or the reflection of vertex 3 with respect to the line connecting vertices 2 and 4 leads to the same location as vertex 1, i.e., $d_{14}=d_{34}, d_{12}=d_{32}$.
 
 \textit{Case 3:} $\mathcal{V}_{A}=\{1,2\}$, $\mathcal{V}_{D}=\{3,4\}$. 
 Denote 
 $$M_{1}=\left[
 \begin{array}{ccc}
 \cos\theta_{12}& \frac{d_{34}}{d_{14}}&0\\
 \sin\theta_{12}& 0 & \frac{d_{34}}{d_{14}}
 \end{array}
 \right],\ y=-\left[
 \begin{array}{cc}
 \cos\theta_{23}& \cos\theta_{41}\\
 \sin\theta_{23}& \sin\theta_{41}
 \end{array}
 \right]
 \left[\begin{array}{c} \frac{d_{23}}{d_{14}}\\ 1\end{array}\right].$$
 We investigate solutions of the following equation: 
 \begin{equation}\label{qua2}
     M_{1}x=y
 \end{equation}
 Since $p$ always corresponds to a feasible solution, $(\frac{d_{12}}{d_{14}},\cos\theta_{34},\sin\theta_{34})^{\top}$ is a solution of $(\ref{qua2})$. Furthermore,
 we have $\nulll(M)=\spann\{(1,-\frac{d_{14}}{d_{34}}\cos\theta_{12},-\frac{d_{14}}{d_{34}}\sin\theta_{12})^{\top}\}$. Thus, the solution set of 
 $(\ref{qua2})$ is $$\{(\frac{d_{12}}{d_{14}},\cos\theta_{34},\sin\theta_{34})^{\top}+t(1,-\frac{d_{14}}{d_{34}}\cos\theta_{12},-\frac{d_{14}}{d_{34}}\sin\theta_{12})^{\top}: \frac{d_{12}}{d_{14}}+t>0,t\in\mathbb{R}\}.$$
 Considering the unit norm constraints of bearing vectors, we have 
 $$||(\cos\theta_{34}-t\frac{d_{14}}{d_{34}}\cos\theta_{12},\sin\theta_{34}-t\frac{d_{14}}{d_{34}}\sin\theta_{12})^\top||=1.$$
 Solving this equation gives $t=0$ or $t=2\frac{d_{34}}{d_{14}}\cos(\theta_{34}-\theta_{12})$. If $\frac{d_{12}}{d_{14}}+2\frac{d_{34}}{d_{14}}\cos(\theta_{34}-\theta_{12})>0$, there exist two feasible RoDs subtended at vertex 1. From Theorem $\ref{shp}$, ($\mathcal{G}(A,D),p)$ is not globally SA-RoD rigid. If $\frac{d_{12}}{d_{14}}+2\frac{d_{34}}{d_{14}}\cos(\theta_{34}-\theta_{12})\leq0$, there exists a unique feasible RoD subtended at vertex 1. By utilizing geometric constraints in triangles $\Delta124$ and $\Delta234$, all RoDs and SAs can be uniquely determined. Therefore, it is globally SA-RoD rigid if and only if 
 \begin{equation}\label{qua3}
 d_{12}+2d_{34}\cos(\theta_{34}-\theta_{12})\leq 0.
 \end{equation}

 \textit{Case 4:} $\mathcal{V}_{A}=\{1,3\}$, $\mathcal{V}_{D}=\{2,4\}$. By invoking the cosine law in triangles $\Delta 234$ and $\Delta 124$, one has
 $$d_{23}^{2}+d_{34}^{2}-2d_{23}d_{34}\cos\theta_{324}=d_{12}^{2}+d_{14}^{2}-2d_{12}d_{14}\cos\theta_{124}.$$
 Combining with RoD constraints, we can conclude that the variable $x=\frac{d_{14}}{d_{12}}>0$ satisfies 
\begin{equation}
(1-\kappa_{413}^{2})x^{2}-2(\cos\theta_{124}-\kappa_{413}\kappa_{213}\cos\theta_{324})x+1-\kappa_{213}^{2}=0.
\label{eq:qu3}
\end{equation}
 Then a direct geometric observation provides that the shape of a quadrilateral is uniquely determined if and only if there exists a unique feasible solution to (\ref{eq:qu3}). Note that this equation always has a feasible positive solution. If $\kappa_{413}=1$, i.e., $d_{14}=d_{34}$, this equation degenerates into a linear equation. Consequently, the uniqueness is guaranteed. Otherwise, $d_{14}\neq d_{34}$. Under this setting, to ensure a unique feasible solution, there exist two cases:\\ (a) There exist two equal positive solutions to (\ref{eq:qu3}). Consequently, the discriminant of this quadratic equation is 0, i.e.,
 $$\kappa_{413}^{2}+\kappa_{213}^{2}-\sin^{2}\theta_{124}-\kappa_{413}^{2}\kappa_{213}^{2}\sin^{2}\theta_{324}-2\kappa_{413}\kappa_{213}\cos\theta_{324}\cos\theta_{124}=0.$$
 Thus, we have 
 \begin{equation}\label{qua4_o}
 d_{12}^{2}d_{34}^{2}+d_{14}^{2}d_{23}^{2}-d_{14}^{2}d_{21}^{2}\sin^{2}\theta_{124}-d_{34}^{2}d_{23}^{2}\sin^{2}\theta_{324}-2d_{12}d_{23}d_{34}d_{14}\cos\theta_{324}\cos\theta_{124}=0.
 \end{equation}
 (b) There exist two distinct solutions to (\ref{eq:qu3}). Furthermore, one root is positive while the other is non-positive. We have 
 $$x_{1}x_{2}=\frac{1-\kappa_{213}^{2}}{1-\kappa_{413}^{2}}\leq 0.$$
 Equivalently, it can be derived that
 \begin{equation}\label{qua5_o}
 (d_{23}-d_{12})(d_{34}-d_{14})\leq 0.
 \end{equation}
 Note that the possible case of $d_{34}=d_{14}$ is also included in this inequality.

 \end{proof}

\section{Proof of Theorem $\ref{thm0}$}
\label{pdual}
\begin{proof}
We only need to show that $$\dim(\nulll (R_{\mathcal{G}(A,D)}(p)))=\dim(\nulll(R_{\mathcal{G}^{\prime}(A,D)}(p))).$$ 
Based on the expression $(\ref{IRM})$, suppose $x=[x_{1}^{\top},\cdots,x_{n}^{\top}]^{\top}\in \nulll (R_{\mathcal{G}(A,D)}(p))$. Then, $$D_{ijk}^{\top}x_{i}-D_{ij}^{\top}x_{j}+D_{ik}^{\top}x_{k}=0,\ (i,j,k)\in\mathcal{T}_{D},$$ 
$$A_{rst}^{\top}x_{r}-A_{rs}^{\top}x_{s}+A_{rt}^{\top}x_{t}=0,\ (r,s,t)\in\mathcal{T}_{A}.$$
Using the orthogonal relations between $D_{\cdot}$ and $A_{\cdot}$, we have
 $$A_{ijk}^{\top}\mathcal{R}(\pi/2)x_{i}-A_{ij}^{\top}\mathcal{R}(\pi/2)x_{j}+A_{ik}^{\top}\mathcal{R}(\pi/2)x_{k}=0,\ (i,j,k)\in\mathcal{T}_{D}=\mathcal{T}^{\prime}_{A},$$ 
$$D_{rst}^{\top}\mathcal{R}(\pi/2)x_{r}-D_{rs}^{\top}\mathcal{R}(\pi/2)x_{s}+D_{rt}^{\top}\mathcal{R}(\pi/2)x_{t}=0,\ (r,s,t)\in\mathcal{T}_{A}=\mathcal{T}^{\prime}_{D}.$$ 
That is $(I_{n}\otimes\mathcal{R}(\pi/2))x\in \nulll(R_{\mathcal{G}^{\prime}(A,D)}(p))$. Therefore,  we can construct a linear mapping 
$F: \nulll(R_{\mathcal{G}(A,D)}(p))\to \nulll(R_{\mathcal{G}^{\prime}(A,D)}(p))$ as follows:
$F(x)=(I_{n}\otimes\mathcal{R}(\pi/2))x.$
Furthermore, it is easy to verify that $F$ is injective. Thus, $$\dim(\nulll (R_{\mathcal{G}(A,D)}(p)))\leq\dim(\nulll( R_{\mathcal{G}^{\prime}(A,D)}(p))).$$ Similarly, $\dim(\nulll( R_{\mathcal{G}^{\prime}(A,D)}(p)))\leq\dim(\nulll (R_{\mathcal{G}(A,D)}(p)))$. Thus, we have finished the proof. 
\end{proof}

\section{Proof of Theorem $\ref{gen1}$}
\label{pg1}
Before presenting the proof, we recall that a point $q$ is said to be a {\it regular point} of a smooth mapping $h: \mathbb{R}^{k}\to\mathbb{R}^{l}$ if $\rank(\frac{\partial h}{\partial p}(q))=\max_{p\in\mathbb{R}^{k}}\rank(\frac{\partial h}{\partial p}(p))$. Then, we provide the following Lemma.
\begin{lemma}\label{lem:reg}
Given a graph $\mathcal{G}(A,D)$ with an arbitrary bipartition, if $p\in\mathbb{R}^{2n}$ is generic, then $p$ is a regular point of $f_{\mathcal{G}(A,D)}$. 
\end{lemma}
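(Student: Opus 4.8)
The plan is to run the classical argument that ``maximal rank is a generic property'' (as in \cite{gend}), after first rewriting the rigidity matrix so that its entries become polynomials in $p$ with integer coefficients.

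First I would restrict attention to the open dense set $\Omega=\{p\in\mathbb{R}^{2n}: p_i\neq p_j\text{ for all }(i,j)\in\mathcal{E}\}$, which is the natural domain of $f_{\mathcal{G}(A,D)}$ and which contains every generic configuration (any coincidence $p_i=p_j$ is a nonzero integer polynomial equation in the coordinates of $p$). Set $\rho\triangleq\max_{q\in\Omega}\rank(R_{\mathcal{G}(A,D)}(q))$. Using the block form $(\ref{IRM})$, I would scale the rows of $R_{\mathcal{G}(A,D)}(p)$ by positive, $p$-dependent scalars: multiply the row indexed by $\alpha_{rst}$ by $||e_{rs}||^{2}||e_{rt}||^{2}$, and the row indexed by $\kappa_{ijk}$ by $\kappa_{ijk}^{-1}||e_{ij}||^{2}||e_{ik}||^{2}\ (=||e_{ij}||^{3}||e_{ik}||)$. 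Each such scalar is a strictly positive real number on $\Omega$, so the resulting matrix $\tilde R(p)$ satisfies $\rank(\tilde R(p))=\rank(R_{\mathcal{G}(A,D)}(p))$ for every $p\in\Omega$. Recalling $D_{ij}=e_{ij}/||e_{ij}||^{2}$, $A_{ij}=\mathcal{R}(\pi/2)D_{ij}$, $\kappa_{ijk}=||e_{ik}||/||e_{ij}||$, and that $\mathcal{R}(\pi/2)$ has integer entries, a direct check shows that every entry of $\tilde R(p)$ is a polynomial with integer coefficients in the coordinates of $p$: the only building blocks left are $e_{ij}=p_j-p_i$ and $||e_{ij}||^{2}=(p_j-p_i)^{\top}(p_j-p_i)$.

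Then I would apply the standard minor argument. By the definition of $\rho$ there exist a point $q_0\in\Omega$ and a choice of $\rho$ rows and $\rho$ columns for which the corresponding $\rho\times\rho$ minor of $\tilde R(q_0)$ is nonzero. Viewed as a function of $p$, this minor is a polynomial $g$ with integer coefficients, and $g(q_0)\neq0$, so $g$ is not the zero polynomial. Since $p$ is generic, its coordinates satisfy no nonzero integer-coefficient polynomial equation, hence $g(p)\neq0$; therefore $\rank(\tilde R(p))\geq\rho$, and so $\rank(\tilde R(p))=\rho$. Consequently $\rank(R_{\mathcal{G}(A,D)}(p))=\rho=\max_{q\in\Omega}\rank(R_{\mathcal{G}(A,D)}(q))$, i.e., $p$ is a regular point of $f_{\mathcal{G}(A,D)}$.

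The step requiring the most care is the reduction to polynomial entries: because $\kappa_{ijk}$ carries the radical $||e_{ik}||/||e_{ij}||$, the entries of $R_{\mathcal{G}(A,D)}$ are not even rational functions of $p$, so one must verify that the specific row rescalings above remove \emph{both} the norm-denominators $||e_{ij}||^{k}$ and the square roots simultaneously and leave genuine integer polynomials. It is also worth remarking explicitly that rescaling rows by $p$-dependent nonzero scalars preserves the rank pointwise even though those scalars are not themselves polynomial. Everything after that point is the textbook Zariski-open argument and needs no further computation.
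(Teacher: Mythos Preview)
Your proof is correct and takes a genuinely different route from the paper. The paper does not rescale rows: instead it keeps the raw minor determinant $h(p)$, notes that entries of $R_{\mathcal{G}(A,D)}$ live in the field $\mathbb{L}(\mathcal{E}_1)$ of finite sums $\sum_F P_F\prod_{(i,j)\in F}||x_i-x_j||$ with $P_F$ rational (equation~(\ref{ll3})), and then invokes external results (Lemmas~3 and~4 of \cite{lyjb002}) on the degree-$2^{m_1}$ extension $\mathbb{L}(\mathcal{E}_1)/\mathbb{Q}(x_1,\dots,x_n)$ to conclude that $h$ vanishing at a generic point forces $h\equiv0$. Your observation that each RoD row of $(\ref{IRM})$ carries the common factor $\kappa_{ijk}$, so that one non-polynomial row scaling suffices to turn every entry into an honest integer polynomial in $p$, sidesteps this field-extension machinery entirely and reduces the question to the elementary Zariski-open statement ``a nonzero integer polynomial cannot vanish at a generic point.'' This is more self-contained and arguably cleaner for this particular rigidity matrix; the paper's route, on the other hand, is the one that generalizes to situations where no single row scaling clears all the radicals simultaneously.
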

\begin{proof}
    Assume $k=\max_{p\in\mathbb{R}^{2n}}\rank(\frac{\partial f_{\mathcal{G}(A,D)}}{\partial p}(p))=\max_{p\in\mathbb{R}^{2n}}\rank(R_{\mathcal{G}(A,D)}(p))$, then there exists a k-by-k submatrix of $R_{\mathcal{G}(A,D)}(p)$ whose rank is $k$. Furthermore, the determinant  $h(p)$ of this submatrix satisfies $h(p)\neq 0$. Suppose that $q$ is a generic configuration, but not a regular point of $f_{\mathcal{G}(A,D)}$, i.e., $\rank(R_{\mathcal{G}(A,D)}(q))<k$. Thus, we have $h(q)=0$. We denote the set of edges involved in the submatrix as $\mathcal{E}_{1}$. 
    
Note that the elements in $R_{\mathcal{G}(A,D)}$ (see $(\ref{IRM})$) are in the form of
$\frac{b_{ij}}{||e_{ij}||}=\frac{p_{j}-p_{i}}{||p_{j}-p_{i}||^{2}}$ (a rational function of the coordinates),
 and $\kappa_{ijk}\frac{b_{ij}}{||e_{ij}||}=||p_{k}-p_{i}||\frac{p_{j}-p_{i}}{||p_{j}-p_{i}||^{3}}=||p_{k}-p_{i}||||p_{j}-p_{i}||\frac{p_{j}-p_{i}}{||p_{j}-p_{i}||^{4}}$(a product of a rational function and square root functions).
Following the routine in \cite[section IV]{lyjb002}, the space $\mathbb{L}(\mathcal{E}_{1})$ is defined as
\begin{equation}\label{ll3}
\mathbb{L}(\mathcal{E}_{1})=\left\{\sum\limits_{F\in 2^{\mathcal{E}_{1}}}P_{F}(x_{1},x_{2},\ldots,x_{n})\prod_{(i,j)\in F}d_{x_{i},x_{j}}: P_{F}\in \mathbb{Q}(x_{1},x_{2},\ldots,x_{n})\right\},
\end{equation}
where $2^{\mathcal{E}_{1}}$ is the power set of $\mathcal{E}_{1}$, $\mathbb{Q}(x_{1},x_{2},\ldots,x_{n})$ is the field of rational functions with rational coefficients, $d_{x_{i},x_{j}}=||x_{i}-x_{j}||$. Let $|\mathcal{E}_{1}|=m_{1}$. According to \cite[Lemma 3]{lyjb002}, $\mathbb{L}(\mathcal{E}_{1})/\mathbb{Q}(x_{1},x_{2},\ldots,x_{n})$ is a field extension of degree $2^{m_{1}}$. Furthermore, $h\in\mathbb{L}(\mathcal{E}_{1})$. Since $q\in\mathbb{R}^{2n}$ is generic satisfying $h(q)=0$, \cite[Lemma 4]{lyjb002} implies that $h(\cdot)=0$ for any configuration. Thus,
we have obtained a contradiction to the fact that $h(p)\neq 0$. This completes the proof.
\end{proof}

Then, we show that for generic frameworks, SA-RoD rigidity and infinitesimal SA-RoD rigidity are equivalent.
\begin{lemma}\label{lem:eqge}
Given a graph $\mathcal{G}(A,D)$ with an arbitrary bipartition and a generic configuration $p\in\mathbb{R}^{2n}$, then framework $(\mathcal{G}(A,D),p)$ is SA-RoD rigid if and only if it is infinitesimally SA-RoD rigid. 
\end{lemma}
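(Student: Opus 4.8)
The plan is to run the classical Asimow--Roth argument (rigidity $=$ infinitesimal rigidity at a regular point) in the SA-RoD setting, using Lemma~\ref{lem:reg} to supply regularity at generic configurations. The ``if'' direction is free: Theorem~\ref{the1} already gives that infinitesimal SA-RoD rigidity implies SA-RoD rigidity for \emph{every} configuration, so only the converse needs work.

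For the converse, assume $(\mathcal{G}(A,D),p)$ is SA-RoD rigid with $p$ generic. First I would record that $f_{\mathcal{G}(A,D)}$ is smooth in a neighbourhood of $p$: since $p$ is generic, no two nodes coincide and no three neighbouring nodes are collinear, so each $\kappa_{ijk}$ is smooth (denominators nonvanishing) and each $\alpha_{rst}$ lies strictly away from the jump at $0\equiv 2\pi$, hence is locally given by a single smooth branch. Set $k=\rank R_{\mathcal{G}(A,D)}(p)$. By Lemma~\ref{lem:reg}, $p$ is a regular point of $f_{\mathcal{G}(A,D)}$, i.e. $k$ equals the maximal rank of $R_{\mathcal{G}(A,D)}$ over all configurations; since rank is lower semicontinuous and bounded above by this maximum, it is locally constant, so there is a neighbourhood $U$ of $p$ with $\rank R_{\mathcal{G}(A,D)}(q)\equiv k$ on $U$. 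The constant-rank level-set theorem then shows that $M:=f_{\mathcal{G}(A,D)}^{-1}(f_{\mathcal{G}(A,D)}(p))\cap U$ is an embedded submanifold of $\mathbb{R}^{2n}$ of dimension $2n-k$.

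Next I would bring in the complete-graph picture. Because $\mathcal{T}_{\mathcal{G}}\subseteq\mathcal{T}_{\mathcal{K}}$ and the constraint values agree at $p$, we have $S_p=f_{\mathcal{K}(A,D)}^{-1}(f_{\mathcal{K}(A,D)}(p))\subseteq f_{\mathcal{G}(A,D)}^{-1}(f_{\mathcal{G}(A,D)}(p))$ by Lemma~\ref{lem1o}, and by Lemma~\ref{lem2nn} the set $S_p$ is a $4$-dimensional smooth manifold. By the definition of SA-RoD rigidity there is a neighbourhood $U_p$ of $p$ with $f_{\mathcal{G}(A,D)}^{-1}(f_{\mathcal{G}(A,D)}(p))\cap U_p=S_p\cap U_p$. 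Intersecting with $U\cap U_p$, the sets $M\cap U_p$ and $S_p\cap U$ coincide and are nonempty open subsets of manifolds of dimensions $2n-k$ and $4$ respectively; invariance of dimension forces $2n-k=4$, i.e. $\rank R_{\mathcal{G}(A,D)}(p)=2n-4$. By Theorem~\ref{infi} this is precisely infinitesimal SA-RoD rigidity, completing the proof.

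The main (really the only delicate) obstacle is the step establishing that the rank of $R_{\mathcal{G}(A,D)}$ is locally constant near $p$ and that $M$ is a genuine submanifold of the expected dimension. This is exactly where genericity is used: Lemma~\ref{lem:reg} guarantees that $k$ is the \emph{maximal} attainable rank, which is what prevents the rank from jumping up on a neighbourhood, and the smoothness remark above is needed so the constant-rank theorem applies despite the piecewise definition of the signed angles. Everything after that is routine bookkeeping with Lemma~\ref{lem1o}, Lemma~\ref{lem2nn} and Theorem~\ref{infi}.
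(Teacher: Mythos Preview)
Your proposal is correct and follows essentially the same Asimow--Roth strategy as the paper: sufficiency via Theorem~\ref{the1}, and for necessity, Lemma~\ref{lem:reg} gives regularity at the generic $p$, the constant-rank theorem (which the paper invokes through Proposition~2 of \cite{lyjb8}) makes the level set a $(2n-k)$-manifold near $p$, and comparison with the $4$-dimensional $S_p$ from Lemmas~\ref{lem1o} and~\ref{lem2nn} forces $k=2n-4$. Your added smoothness discussion is more explicit than the paper's, though note that smoothness of $\kappa_{ijk}$ only needs $p_i\neq p_j$ (not non-collinearity), and the jump in $\alpha_{rst}$ is avoided precisely when $b_{rs}\neq b_{rt}$, which is slightly weaker than non-collinearity; both hold generically, so the argument is unaffected.
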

\begin{proof}
Necessity: Since $(\mathcal{G}(A,D),p)$ is SA-RoD rigid with a generic configuration $p$. Based on Lemma $\ref{lem:reg}$, $p$ is a regular point of $f_{\mathcal{G}(A,D)}$. Let $\rank(R_{\mathcal{G}(A,D)}(p))=k$. According to \cite[Proposition 2]{lyjb8}
, there exists a neighborhood $U$ of $p$ such that $f_{\mathcal{G}(A,D)}^{-1} (f_{\mathcal{G}(A,D)}(p))\cap U$
is a $2n-k$ dimensional manifold. Based on Lemma $\ref{lem1o}$ and Lemma $
\ref{lem2nn}$, SA-RoD rigidity of $(\mathcal{G}(A,D),p)$ implies that $2n-k=4$. Thus, $k=2n-4$, i.e., $(\mathcal{G}(A,D),p)$ is infinitesimally SA-RoD rigid. 

Sufficiency: It is a consequence of Theorem $\ref{the1}$.
\end{proof}
\begin{proof} [Proof of Theorem $\ref{gen1}$] Based on Lemma $\ref{lem:eqge}$, we only need to show the result for infinitesimal SA-RoD rigidity. Suppose that $p^{\ast}\in\mathbb{R}^{2n}$ is generic, $(\mathcal{G}(A,D),p^{\ast})$ is infinitesimally SA-RoD rigid. So $\rank(R_{\mathcal{G}(A,D)}(p^{\ast}))=\max_{p\in\mathbb{R}^{2n}}\rank(R_{\mathcal{G}(A,D)}(p))=2n-4$. Let $q\in\mathbb{R}^{2n}$ be a generic point distinct to $p^{\ast}$. Based on Lemma $\ref{lem:reg}$, we have $\rank(R_{\mathcal{G}(A,D)}(q))=\max_{p\in\mathbb{R}^{2n}}\rank(R_{\mathcal{G}(A,D)}(p))=2n-4$. Therefore, $(\mathcal{G}(A,D),q)$ is infinitesimally SA-RoD rigid.  
\end{proof}

\section{Proof of Lemma $\ref{1ver}$}
\label{padd}
\begin{proof}
 For a Type $A_{1}$ 1-vertex addition, without loss of generality, assume $i,n+1\in\mathcal{V}^{\prime}_{A}$, and $j\in\mathcal{V}^{\prime}_{D}$.
From the global SA-RoD rigidity of the framework $(\mathcal{G}(A,D),p)$, there exist at least two neighbors for any vertex in graph $\mathcal{G}$. Assume $(m,i)\in\mathcal{E}$, $(l,j)\in\mathcal{E}$, $m\neq j$, and $l\neq i$. Based on Theorem $\ref{shp}$, SA $\alpha_{imj}$ and RoD $\kappa_{jli}$ can be uniquely determined by SA and RoD constraints no matter whether $(i,j)\in\mathcal{E}$. The algebraic properties of SAs and RoDs indicates that $$\kappa_{ji,n+1}= \kappa_{jl,n+1}/\kappa_{jli},\ \alpha_{ij,n+1}=\alpha_{im,n+1}-\alpha_{imj}.$$
In addition, since $n+1\in\mathcal{V}_A$, the triangle $\Delta ij,n+1$ (may be degenerate) is constrained by $\kappa_{ji,n+1}$, $\alpha_{ij,n+1}$, and $\alpha_{n+1,ij}$. Thus, the shape of the triangle $\Delta ij,n+1$ is uniquely determined and no reflection of vertex $n+1$ can occur. So $(\mathcal{G}^{\prime}(A,D),p^{\prime})$ is globally SA-RoD rigid.

Then, we briefly sketch the proof for other types. For a Type $A_{2}$ 1-vertex addition, we have $i$, $j$, $n+1\in\mathcal{V}^{\prime}_{A}$. If these three vertices are non-collinear, the shape uniqueness of $\Delta ij,n+1$ under SA constraints can be guaranteed. A similar argument can provide global SA-RoD rigidity of the induced framework. For a Type $D_{1}$ 1-vertex addition, the proof can be conducted similarly. For a Type $D_{2}$ 1-vertex addition, we have $i$, $j$, $k$, $n+1\in\mathcal{V}^{\prime}_{D}$. Similarly, we know $\kappa_{ik,n+1}$, $\kappa_{ij,n+1}$, and $\kappa_{kj,n+1}$ can be uniquely determined by SA and RoD constraints. Combining with RoDs $\kappa_{n+1,i,k}$ and $\kappa_{n+1,i,j}$, the shapes of triangle $\Delta{ik,n+1}$ and triangle $\Delta{jk,n+1}$ can be uniquely determined. Since vertices $i$, $j$, and $k$ are non-collinear, the flipping ambiguity of vertice $n+1$ is circumvented. So the induced framework is globally SA-RoD rigid as well.
\end{proof}

\section{Proof of Theorem $\ref{mert}$}
\label{pmer}
\begin{proof}
Without loss of generality, assume $i\in\mathcal{V}_{D},m,j,k\in\mathcal{V}_{A}$. For simplicity, we use $f_{\mathcal{G}}$ to represent the rigidity function $f_{\mathcal{G}(A,D)}$. To show $f_{\mathcal{G}}^{-1}(f_{\mathcal{G}}(r))=S_{r}$, it is sufficient to prove $f_{\mathcal{G}}^{-1}(f_{\mathcal{G}}(r))\subseteq S_{r}$. Let $r^{\prime}=[p^{\prime \top},q^{\prime \top}]^{\top}\in f_{\mathcal{G}}^{-1}(f_{\mathcal{G}}(r))$, then we have $f_{\mathcal{G}_{1}}(p^{\prime})=f_{\mathcal{G}_{1}}(p)$ and $f_{\mathcal{G}_{2}}(q^{\prime})=f_{\mathcal{G}_{2}}(q)$. By utilizing the global SA-RoD rigidity of $(\mathcal{G}_{1}(A_{1},D_{1}),p)$ and $(\mathcal{G}_{2}(A_{2},D_{2}),q)$, there exist $c_{i}>0,\theta_{i}\in [0,2\pi),\xi_{i}\in\mathbb{R}^{2}$ for $i=1,2$ such that 
\begin{equation}\label{qu0}
p^{\prime}=c_{1}I_{n_{1}}\otimes\mathcal{R}(\theta_{1})p+\mathbf{1}_{n_{1}}\otimes\xi_{1}, q^{\prime}=c_{2}I_{n_{2}}\otimes\mathcal{R}(\theta_{1})q+\mathbf{1}_{n_{2}}\otimes\xi_{2},
\end{equation} where $n_{i}=|\mathcal{V}_{i}|,i=1,2$. In particular, it holds that \begin{equation}\label{qu1}
p_{s}^{\prime}=c_{1}\mathcal{R}(\theta_{1})p_{s}+\xi_{1}, q_{t}^{\prime}=c_{2}\mathcal{R}(\theta_{1})q_{t}+\xi_{2},
\end{equation}
where $s=m,i$ and $t=j,k$. As in the proof in Lemma $\ref{1ver}$, we know that in the quadrilateral $\Diamond{mijk}$, the SAs subtended at vertices $m,k,j$ and the RoD subtended at vertice $i$ can be completely determined from the rigidity function $f_{\mathcal{G}}(r)$. Since $m,j,k$ are non-collinear, the shape of $\Diamond{mijk}$ can be uniquely determined. It indicates that $c_{1}=c_{2}$, $\theta_{1}=\theta_{2}$, and $\xi_{1}=\xi_{2}$ in $(\ref{qu1})$. Thus, $r^{\prime}=c_{1}I_{n}\otimes\mathcal{R}(\theta_{1})r+\mathbf{1}_{n}\otimes\xi_{1}$, i.e., $r\in S_{r}$. This completes the proof.
\end{proof}

\section{Proof of Theorem $\ref{equv}$}
\label{peq}
\begin{proof}
\textit{Necessity:} Suppose a configuration $y=(y^{\top}_{1},\ldots,y^{\top}_{n})^{\top}\in\mathbb{R}^{2n}$ satisfies $f_{\hat{\mathcal{G}}}(x)=f_{\hat{\mathcal{G}}}(y)$, where the subscript $A,D$ is omitted in the expression ${\hat{\mathcal{G}}}(A,D)$. Based on Assumption 3 and Lemma $\ref{lem1o}$, the subgraph of ${\hat{\mathcal{G}}}$ spanned by $\mathcal{V}_{a}$ is complete and the induced framework is globally SA-RoD rigid. Without loss of generality, assume $\mathcal{V}_a=\{1,2,\ldots,n_a\}$, and denote $x_{a}=(x^{\top}_{1},\ldots,x^{\top}_{n_{a}})^{\top}$, $y_{a}=(y^{\top}_{1},\ldots,y^{\top}_{n_{a}})^{\top}$. Then $y_{a}\in S_{x_{a}}$. So there exist $c>0, \theta\in[0,2\pi)$, and $\xi\in\mathbb{R}^{2}$ such that $y_{i}=c\mathcal{R}(\theta)x_{i}+\xi$ for $i=1,2,\ldots,n_{a}$. It implies that $cI_{n}\otimes\mathcal{R}(\theta)x+\mathbf{1}_{n}\otimes\xi$ is a solution to problem $(\ref{snl11})$. Based on the localizability of $(\hat{\mathcal{G}}(A,D),x,\mathcal{V}_{a})$, we have $y=cI_{n}\otimes\mathcal{R}(\theta)x+\mathbf{1}_{n}\otimes\xi$.  So $(\hat{\mathcal{G}}(A,D),x)$ is globally SA-RoD rigid.

\textit{Sufficiency:} Suppose $y=(y^{\top}_{1},\ldots,y^{\top}_{n})^{\top}\in\mathbb{R}^{2n}$ is a solution to $(\ref{snl11})$. Then $y\in  f_{\hat{\mathcal{G}}}^{-1}(f_{\hat{\mathcal{G}}}(x))$. Due to the global SA-RoD rigidity of $(\hat{\mathcal{G}}(A,D),x)$, $f_{\hat{\mathcal{G}}}^{-1}(f_{\hat{\mathcal{G}}}(x))=S_{x}$. Therefore, there exist $c>0, \theta\in[0,2\pi)$, and $\xi\in\mathbb{R}^{2}$ such that $y_{i}=c\mathcal{R}(\theta)x_{i}+\xi$ for $i=1,2,\ldots,n$. In particular, it holds that $y_{i}-y_{j}=c\mathcal{R}(\theta)(x_{i}-x_{j}), i\in\mathcal{V}_{a}$. From Assumption 3, we have $n_{a}\geq 2$. Since $x_{i}=y_{i}$ for $i\in\mathcal{V}_{a}$, we have $c=\frac{||y_{i}-y_{j}||}{||x_{i}-x_{j}||}=1,\ \theta=0$, and $\xi=0$. Thus, $y=x$ and  $(\mathcal{\hat{G}}(A,D),x,\mathcal{V}_{a})$ is SA-RoD localizable.
\end{proof}

\section{Proof of Theorem $\ref{equv1}$}
\label{pequv}
To facilitate the recovery of locations from bearings and distances,  we define a path matrix with respect to a base vertex.
\begin{definition}\label{def:path}
Given an oriented graph $\mathcal{G}=(\mathcal{V},\mathcal{E})$ and any base vertex $l\in \mathcal{V}$, after fixing a spanning tree ${Tree}_{\mathcal{G}}$, a path matrix $P_{l}=[{P}_{l,ij}]\in\mathbb{R}^{n\times m}$  associated with base vertex $l$ is defined as
\begin{equation}
{{P}_{l,ij}}\triangleq \begin{cases}
   1 &  \text{the orientation of $e_{j}$ is consistent with that of the path $\pi_{l\to i}$},  \\
   -1 & \text{the orientation of $e_{j}$ is opposite to that of the path $\pi_{l\to i}$}, \\
   0 & \text{$e_{j}$ does not occur in the path $\pi_{l\to i}$},
\end{cases}
\end{equation}
where $e_j$ is the $j$-th edge, $\pi_{l\to i}$ encodes the path from vertex $l$ to vertex $i$ in ${Tree}_{\mathcal{G}}$.
\end{definition}

Note that the elements in the $l$-th row of $P_{l}$ are always zero, and $P_{l}$ depends on the choice of the spanning tree. Despite this dependence, we can establish the following lemma. 

\begin{lemma}Given a graph $\mathcal{G}$, for any two path matrices $P_{l},P^{\prime}_{l}$ with the same base vertex $l\in\mathcal{V}$, there exists a matrix $\Phi_{l}\in\mathbb{R}^{n\times (m-n+1)}$ such that
\begin{equation}\label {ec1}
P_{l}-P^{\prime}_{l}=\Phi_{l}C,
\end{equation}
where $C$ is the graph cycle basis matrix of $\mathcal{G}$.
\end{lemma}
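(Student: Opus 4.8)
The plan is to reduce the statement to a linear-algebraic fact about the cut space and flow space of the oriented graph $\mathcal{G}$, exploiting the duality recalled in the Preliminaries (Lemma~\ref{lem1}). First I would observe that every row of a path matrix $P_l$ is, by construction, the signed characteristic vector of an $(l,i)$-path in the chosen spanning tree ${Tr}_\mathcal{G}$; in particular, when we take the boundary of such a path under the incidence map $H^\top$, we obtain exactly $e_i - e_l \in \mathbb{R}^n$ (the unit vector at $i$ minus the unit vector at $l$), since all intermediate vertices cancel telescopically. The same identity holds for the corresponding row of any other path matrix $P'_l$ with the same base vertex $l$, because the boundary of a path depends only on its endpoints. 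Hence $(P_l - P'_l) H^\top$ has every row equal to zero, i.e. $(P_l - P'_l) H^\top = 0$, which says precisely that each row of $P_l - P'_l$ lies in the orthogonal complement of the column space of $H$ — that is, in the flow space (cycle space) of $\mathcal{G}$.

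Next I would invoke Lemma~\ref{lem1}: the flow space has dimension $m-n+1$ and is spanned by the signed characteristic vectors of cycles, so the rows of the cycle basis matrix $C \in \mathbb{R}^{(m-n+1)\times m}$ form a basis for it. Since each of the $n$ rows of $P_l - P'_l$ lies in this $(m-n+1)$-dimensional row space of $C$, each row can be written uniquely as a linear combination of the rows of $C$; collecting the coefficient vectors row by row yields a matrix $\Phi_l \in \mathbb{R}^{n\times(m-n+1)}$ with $P_l - P'_l = \Phi_l C$, as claimed. (One may note in passing that the $l$-th row of both $P_l$ and $P'_l$ is zero, so the $l$-th row of $\Phi_l$ is zero as well, consistent with the convention.)

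The only genuinely substantive step is the first one — verifying carefully that the incidence/boundary map sends a tree path's signed characteristic vector to $e_i - e_l$ regardless of which spanning tree was used, and hence that $P_l$ and $P'_l$ agree after applying $H^\top$. This is elementary but is where the orientation bookkeeping must be handled correctly: one has to check that the sign convention in Definition~\ref{def:path} (edge consistent with / opposite to the orientation along the path) matches the sign convention used in defining $H$, so that the telescoping cancellation genuinely occurs. Once that is pinned down, everything else is a direct application of the dimension/basis statement for the flow space, so I do not anticipate any further obstacle.
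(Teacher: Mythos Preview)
Your approach is correct and essentially matches the paper's: both argue that each row of $P_l - P'_l$ lies in the cycle (flow) space and then express it in the basis given by the rows of $C$. The paper states this more tersely by noting that the difference of two $(l,i)$-paths is (a combination of) cycles, whereas you verify it algebraically via the boundary map; note one small notational slip --- with the paper's convention $H \in \mathbb{R}^{m\times n}$, the identity you want is $(P_l - P'_l) H = 0$, since $(P_l - P'_l) H^\top$ is not even defined dimensionally.
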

\begin{proof}
Note that every nonzero row of $P_{l}-P^{\prime}_{l}$ represents a cycle. Thus, each row in matrix $P_{l}-P^{\prime}_{l}$ can be obtained as a linear combination of rows in $C$. That is, there exists a matrix $\Phi_{l}$ of suitable size such that $P_{l}-P^{\prime}_{l}=\Phi_{l}C$.
\end{proof}
\begin{lemma}\label{lem:reco}
    Given a framework $(\mathcal{G},x)$, suppose that $P_{l}$ is a path matrix with base vertex l, $B=(\ldots,b_{ij},\ldots)\in\mathbb{R}^{2\times m}$ is the bearing matrix, and $d=(\ldots,d_{ij},\ldots)^{\top}\in\mathbb{R}^{m}$ is the distance vector of all edges, then it holds that $$x=\mathbf{1}_{n}\otimes x_{l}+P_{l}\odot B d.$$
\end{lemma}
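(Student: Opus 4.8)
The plan is to verify the claimed identity block by block, comparing the $i$-th $2$-vector (rows $2i-1,2i$) on each side. By the definition of the Khatri--Rao product, the $j$-th column of $P_{l}\odot B$ is $(P_{l,\cdot j})\otimes b_{e_{j}}$, where $e_{j}$ denotes the $j$-th edge, so the rows $2i-1,2i$ of that column form the vector $P_{l,ij}\,b_{e_{j}}$. Hence the $i$-th block of $(P_{l}\odot B)d$ is $\sum_{j=1}^{m}P_{l,ij}\,d_{e_{j}}\,b_{e_{j}}$, and since the $i$-th block of $\mathbf{1}_{n}\otimes x_{l}$ is $x_{l}$, it suffices to prove $x_{l}+\sum_{j=1}^{m}P_{l,ij}\,d_{e_{j}}\,b_{e_{j}}=x_{i}$ for every $i\in\mathcal{V}$.

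First I would fix the spanning tree ${Tr}_{\mathcal{G}}$ underlying $P_{l}$ and recall that, for each $i\neq l$, there is a \emph{unique} path $l=v_{0},v_{1},\dots,v_{k}=i$ in ${Tr}_{\mathcal{G}}$; by Definition $\ref{def:path}$, $P_{l,ij}=0$ unless $e_{j}$ is one of the edges $\{v_{t-1},v_{t}\}$ of this path, in which case $P_{l,ij}=\pm1$ according to whether the fixed orientation of $e_{j}$ agrees with the direction of traversal from $v_{t-1}$ to $v_{t}$. The key elementary observation is then that, with the sign conventions of Definition $\ref{cyc_beariing}$ and Definition $\ref{def:path}$, $P_{l,ij}\,d_{e_{j}}\,b_{e_{j}}=x_{v_{t}}-x_{v_{t-1}}$ for the path edge $e_{j}=\{v_{t-1},v_{t}\}$: the product $d_{e_{j}}b_{e_{j}}$ equals $\pm(x_{v_{t}}-x_{v_{t-1}})$, and the factor $P_{l,ij}$ cancels the orientation sign so that the result is the displacement along the direction of traversal. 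Checking the two orientation cases of this statement is the one place the argument touches the conventions, and is essentially the only bookkeeping in the proof.

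Granting this, the $i$-th block of the right-hand side becomes $x_{l}+\sum_{t=1}^{k}(x_{v_{t}}-x_{v_{t-1}})$, which telescopes to $x_{l}+(x_{v_{k}}-x_{v_{0}})=x_{l}+(x_{i}-x_{l})=x_{i}$. The case $i=l$ is immediate, since the path is empty, the sum vanishes, and the block equals $x_{l}$. As this holds in every block, we conclude $x=\mathbf{1}_{n}\otimes x_{l}+P_{l}\odot B\,d$.

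I do not expect a genuine obstacle here: the statement is the telescoping (``fundamental theorem of calculus'') identity for edge displacements along a tree path, and the only care needed is to pin down the orientation and bearing sign conventions so that $P_{l,ij}\,d_{e_{j}}\,b_{e_{j}}$ is the correctly signed edge vector. As a remark that is not needed for the proof, the identity is independent of the choice of spanning tree: if $P_{l}'$ is another path matrix with the same base vertex, then $P_{l}-P_{l}'=\Phi_{l}C$ by $(\ref{ec1})$, and the $i$-th block of $((P_{l}-P_{l}')\odot B)d$ equals $\sum_{k}\Phi_{l,ik}\sum_{j}C_{kj}\,d_{e_{j}}\,b_{e_{j}}=0$, because each inner sum is a cycle sum and vanishes by the compatibility condition $(\ref{comp1})$.
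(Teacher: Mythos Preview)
Your proof is correct and follows essentially the same approach as the paper: both compute the $i$-th block by telescoping the edge displacements along the tree path, and both note independence from the choice of spanning tree via $(\ref{ec1})$ and the cycle compatibility condition $(\ref{comp1})$. The only cosmetic difference is order: the paper first establishes well-definedness and then appeals to the componentwise calculation, whereas you carry out the telescoping explicitly and relegate well-definedness to a closing remark.
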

\begin{proof}
    Denote $y=\mathbf{1}_{n}\otimes x_{l}+P_{l}\odot B d$. First, we will show that $y$ is well defined, i.e., different choices of path matrices $P_{l}, P^{\prime}_{l}$ will not affect the definition of $y$. Let $y^{\prime}=\mathbf{1}_{n}\otimes x_{l}+P^{\prime}_{l}\odot B d$.
From equation (\ref{ec1}), there exists a matrix $\Phi_{l}\in\mathbb{R}^{n\times (m-n+1)}$ such that 
\begin{equation}\label{com1}
y-y^{\prime}=(P_{l}-P^{\prime}_{l})\odot Bd=[\Phi_{l}C]\odot Bd=\Phi_{l}C_{b}d=0.
\end{equation}
Then, each component of $y$ can be calculated to conclude that $y=x$.
\end{proof}
\begin{proof}[Proof of Theorem $\ref{equv1}$]
Denote the solution sets of $(\ref{snl11})$ and $(\ref{snl12})$ as $\mathcal{S}_{1}$ and $\mathcal{S}_{2}$, respectively. We construct a mapping $T:\ \mathcal{S}_{1}\to\mathcal{S}_{2}$ as $$T(x)=(\ldots, ||x_{j}-x_{i}||, \frac{x^{\top}_{j}-x^{\top}_{i}}{||x_{j}-x_{i}||},\ldots)^{\top},\ (i,j)\in\mathcal{\hat{E}}.$$
It is direct to show that $T$ is well defined. Then we will verify its bijectivity.

\textit{Step 1:} We will prove that $T$ is injective. Suppose $T(x)=T(y)$ for $x, y\in\mathcal{S}_{1}$. By the definition, $||x_{j}-x_{i}||=||y_{j}-y_{i}||,\  \frac{x_{j}-x_{i}}{||x_{j}-x_{i}||}=\frac{y_{j}-y_{i}}{||y_{j}-y_{i}||}$ for $(i,j)\in\hat{\mathcal{E}}$. So $x_{j}-x_{i}=y_{j}-y_{i}$ for $(i,j)\in\hat{\mathcal{E}}$. 
The connectivity of $\mathcal{G}$ implies that $x_{j}-x_{1}=y_{j}-y_{1}$ for any $j\in\mathcal{V}$. Then it follows that $x=y$.

\textit{Step 2:} To show $T$ is surjective, suppose $(\ldots,d_{ij},\  b^{\top}_{ij},\ldots)^{\top}\in \mathcal{S}_{2}$, we need to find $x\in\mathcal{S}_{1}$ such that $T(x)=(\ldots,d_{ij},\  b^{\top}_{ij},\ldots)^{\top}$. First, assume node 1 is an anchor, let $x_{1}=p_{1}$. Based on Lemma $\ref{lem:reco}$, $x=\mathbf{1}_{n}\otimes x_{1}+P_{1}\odot B d$.
If $(i,j)\in\hat{\mathcal{E}}$, without loss of generality, assume that the shortest path $t_{1},t_{2},\ldots,t_{i}$ connecting vertex $i$ with vertex $1$ does not pass vertex $j$, otherwise we can swap $i$ and $j$. The path consisting of $t_{1},t_{2},\ldots,t_{i}$ and edge $(i,j)$ ends with node $j$. By previous arguments, we can modify path matrix $P_{1}$ by changing the $i$-th and $j$-th rows using these constructed paths without affecting $x$. Then the $i$-th and $j$-th rows only differ in the edge $(i,j)\in\hat{\mathcal{E}}$. It follows that
$x_{j}-x_{i}=d_{ij}b_{ij}.$
Since $d_{ij}>0$ and $||b_{ij}||=1$ hold for $(i,j)\in\mathcal{\hat{E}}$, we obtain that
$||x_{i}-x_{j}||=d_{ij},\frac{x_{i}-x_{j}}{||x_{i}-x_{j}||}=b_{ij}.$ On the other hand, the first two equations in constraints of $(\ref{snl12})$ can be transformed into those of $(\ref{snl11})$. This completes the proof.

\end{proof}
\section{Proof of Theorem $\ref{pr1}$}
\label{app:ful}
\begin{proof}
Note that 2) can be derived as a consequence of 1). Hence, we will prove the statement in 1) by induction on $n$. In particular, $C_{B}$ can be chosen as a square matrix. If $n=3$, the framework is a triangle with anchors $1\in\mathcal{V}_{D}$, $2 \in\mathcal{V}_{A}$ and the free node $3\in\mathcal{V}_{D}$. One can verify that $\rank(C_{B})=6$. Then we suppose that for a framework $(\mathcal{\hat{G}}(A,D),x)$ constructed by a Type $(D_{1},A_{1})$ bilateration ordering, $\rank(C_{B,n})=4n-6$, where the subscript $n$ indicates the number of nodes. The goal is to show after adding the $(n+1)$-th node, $\rank(C_{B,n+1})=\rank(C_{B,n})+4=4n-2$. \\

\textit{Case 1:} If $n+1$ is even, node $n+1\in\mathcal{V}_{A}$. Assume two edges $(n+1,i)$ and $(n+1,j)\in\mathcal{E}$ are added satisfying $i,j\in\mathcal{V}_{D},i<j<n+1$. One can find a path connecting nodes $i$ and $j$ in the original graph. Note that the path $i\to n+1\to j$ also connects nodes $i$ and $j$. These two paths are distinct due to the existence of node $n+1$. Thus, a new cycle can be obtained. This corresponds to adding a submatrix $[\ast,\pm d_{2n-2}I_{2},\pm d_{2n-1}I_{2}]\in\mathbb{R}^{2\times(4n-2)}$. On the other hand, the SA $\theta_{n+1,i,j}$ between edges $(n+1,i)$ and $(n+1,j)$ corresponds to a submatrix $[\textbf{0},\mathcal{R}(\theta_{n+1,i,j}),-I_{2}]\in\mathbb{R}^{2\times(4n-2)}$. So $C_{B,n+1}\in\mathbb{R}^{(4n-2)\times(4n-2)}$ has the following block structure:
\begin{equation}
C_{B,n+1}
=\left[
\begin{array}{cc}
   C_{B,n}&\mathbf{0}  \\
   \ast&\Delta_{n+1}
\end{array}
\right]
:=
\left[
\begin{array}{ccc}
     C_{B,n} &\textbf{0}&\textbf{0}\\
   \ast&\pm d_{2n-2}I_{2}&\pm d_{2n-1}I_{2}  \\
    \textbf{0}&\mathcal{R}(\theta_{n+1,i,j})&-I_{2}
\end{array}
\right].
\nonumber
\end{equation}
Since $\rank(C_{B,n})=4n-6$, $C_{B,n}\in\mathbb{R}^{(4n-6)\times(4n-6)}$ is nonsingular. By Gaussian elimination method, it holds that
\begin{equation}
\rank (C_{B,n+1})
=\rank( C_{B,n})+\rank(\Delta_{n+1}).
\nonumber
\end{equation}
If $\theta_{n+1,i,j}\neq 0\mod\pi$, we can verify that $\rank(\Delta_{n+1})=4$. Otherwise, nodes $n+1$,$i$,and $j$ are collinear. We consider the case that vertice $n+1$ lies on the segment connecting node $i$ and node $j$ (other cases can be handled similarly, here we omit the details). Then,  $\Delta_{n+1}=\left[
\begin{array}{cc}
   d_{2n-2}I_{2}&d_{2n-1}I_{2}  \\
   I_{2}&-I_{2}
\end{array}
\right]$, whose rank is also 4. So it holds that $\rank(C_{B,n+1})=\rank(C_{B,n})+4=4n-2$.

\textit{Case 2:} If $n+1$ is odd, we have $n+1\in\mathcal{V}_{D}$. Without loss of generality, assume that two edges $(n+1,i)$ and $(n+1,j)\in\mathcal{E}$ are added satisfying $i\in\mathcal{V}_{A},j\in\mathcal{V}_{D},i<j<n+1$. Similarly, the newly generated cycle corresponds to a sub-matrix $[\ast,\pm d_{2n-2}I_{2},\pm d_{2n-1}I_{2}]\in\mathbb{R}^{2\times(4n-2)}$. On the other hand, the SA $\theta_{i,i_{0},n+1}$ between edge $(i,n+1)$ and edge $(i,i_{0})$ corresponds to a submatrix $[\textbf{0},\mathcal{R}(\theta_{i,i_{0}n+1}),\textbf{0},-I_{2},\textbf{0}]\in\mathbb{R}^{2\times(4n-2)}$. For convenience, this submatrix is expressed as $[\ast,-I_{2},\textbf{0}]$, and $C_{B,n+1}\in\mathbb{R}^{(4n-2)\times(4n-2)}$ can be written as:
\begin{equation}
C_{B,n+1}
=
\left[
\begin{array}{ccc}
     C_{B,n} &\textbf{0}&\textbf{0}\\
   \ast&\pm d_{2n-2}I_{2}&\pm d_{2n-1}I_{2}  \\
    \ast&-I_{2}&\textbf{0}
\end{array}
\right].
\nonumber
\end{equation}
Similarly, we can verify that $\rank(C_{B,n+1})=4n-2$. This completes the proof.
\end{proof}

\bibliography{references}   
%

\end{document}